\newlength\Colsep
\newtheorem{Theorem}{Theorem}
\newtheorem{Assumption}{Assumption}
\newtheorem{Definition}{Definition}
\newtheorem{Lemma}{Lemma}
\newtheorem{Proposition}{Proposition}
\newtheorem{Remark}{Remark}
\newcommand{\inr}[1]{\bigl< #1 \bigr>}
\newcommand{\norm}[1]{\left\|#1\right\|}%
\DeclareMathOperator*{\argmin}{argmin}
\def\ds1{\textrm{1\kern-0.25emI}} 
\newcommand \E{\mathbb{E}}
\newcommand \R{\mathbb{R}}
\newcommand \cB{{\cal B}}
\newcommand \cD{{\cal D}}
\newcommand \cF{{\cal F}}
\newcommand \cH{{\cal H}}
\newcommand \cI{{\cal I}}
\newcommand \cL{{\cal L}}
\newcommand \cN{{\cal N}}
\newcommand \cO{{\cal O}}
\newcommand \cR{{\cal R}}
\newcommand \cS{{\cal S}}
\newcommand \cX{{\cal X}}
\newcommand \cY{{\cal Y}}
\newcommand \bE{{\mathbb E}}
\newcommand \bN{{\mathbb N}}
\newcommand \bP{{\mathbb P}}
\newcommand \bR{{\mathbb R}}
\begin{document}
	
	\begin{frontmatter}
		
		\title{Robust learning and complexity dependent bounds for regularized problems}
		\runtitle{Robust learning and complexity dependent bounds for regularized problems}
		
		\begin{aug}
			\author{\fnms{CHINOT}  \snm{Geoffrey}\thanksref{a,e1}\ead[label=e1,mark]{geoffrey.chinot@ensae.fr}}

			\runauthor{CHINOT Geoffrey}
			
			\affiliation{ENSAE, CREST, Institut Polytechnique de Paris}
			
			\address[a]{ENSAE, CREST, 5 avenue Henri Chatelier 91120 Palaiseau, France. \printead{e1}}
			
		\end{aug}
		
		\begin{abstract}
				We study Regularized Empirical Risk Minimizers (RERM) and minmax Median-Of-Means (MOM) estimators where the regularization function $\phi(\cdot)$ is an even convex function. We obtain bounds on the $L_2$-estimation error and the excess risk that depend on $\phi(f^*)$, where $f^*$ is the minimizer of the risk over a class $F$. The estimators are based on loss functions that are both Lipschitz and convex. Results for the RERM are derived under weak assumptions on the outputs and a sub-Gaussian assumption on the class $\{ (f-f^*)(X), f \in F  \}$. Similar results are shown for minmax MOM estimators in a close setting where outliers may corrupt the dataset and where the class $\{ (f-f^*)(X), f \in F  \}$ is only supposed to satisfy weak moment assumptions, relaxing the sub-Gaussian and the i.i.d hypothesis necessary for RERM. The analysis of RERM and minmax MOM estimators with Lipschitz and convex loss funtions is based on a weak local Bernstein Assumption. We obtain two “meta theorems" that we use to study linear estimators regularized by the Elastic Net. We also examine Support Vector Machines (SVM), where no sub-Gaussian assumption is required and when the target $Y$ can be heavy-tailed, improving the existing literature.
		\end{abstract}
		
		\begin{keyword}
			\kwd{Regularized learning, sharp oracle inequality, Kernel Method, Robustness, Median-of-means, elastic-net}
		\end{keyword}
		
	\end{frontmatter}

\section{Introduction}\label{sec:Intro}
On one hand, real world data analysis problems require nonlinear methods to model complex dependencies between random variables. On the other hand, linear models are well-understood and easy to implement, even in high dimension \cite{bishop2006pattern}. Over the last two decades, learning with positive definite kernels have become very popular in machine learning \cite{shawe2004kernel,scholkopf1999advances,steinwart2008support}. This popularity can be explained because kernel methods combine these advantages. Kernels can be used to model non linear dependencies, mapping them to a (usually high-dimensional) feature space. In this space, the estimation is linear. In this sense, kernel methods extend well-understood, linear statistical learning technics to real-world, complicated, structured, high-dimensional data based on a rigorous mathematical framework leading to practical modelling tools and algorithms. They have been used in many different fields such as finance \cite{chalup2008kernel}, biology \cite{scholkopf2004support,ben2005kernel,noble2004support}, econometry \cite{li2007nonparametric}, computer vision \cite{yang2000face}. \\
Let $(X,Y)$ be a random variable with distribution $P$ and $\cH_K$ a Reproducible Kernel Hilbert Space (RKHS) associated to a positive definite kernel $K$. Kernel methods consist in computing $f^*$ in $\cH_K$ such that the \textit{risk} $\cR(f) := \bE_{(X,Y) \sim P} [\ell(f(X),Y )]$ is minimized in $f^*$, where $\ell(f(X),Y )$ measures the error of predicting $ f(X)$ while the true answer is $Y$. However, the distribution $P$ is unknown and the minimization of the risk, necessary to compute $f^*$, is impossible in practice. To proceed, one is given a dataset $\cD = (X_i,Y_i)_{i=1}^N$ of random variables. Using the dataset $\cD$, kernel methods compute $\hat f_N^{\lambda}$ in $\cH_K$ such that
\begin{equation}  \label{def1_rkhs}
\hat f_N^{\lambda} \in \argmin_{f \in \cH_K} \frac{1}{N} \sum_{i=1}^{N}  \ell(f(X_i),Y_i) + \lambda \|f\|_{\cH_K}^2 \enspace,
\end{equation}
where $ \|f\|_{\cH_K}$ is the norm of $f$ in $\cH_K$ and $\lambda \geq 0$ is an hyperparameter to be tuned. The regularization term $ \lambda \|f\|_{\cH_K}^2$ controls the smoothness of $\hat f_N^{\lambda}$ through the value of $\lambda$. This regularization term is introduced to avoid “overfitting" since kernels provide enough flexibility to fit training data exaclty. The value of $\lambda$ balance the bias and the variance of $\hat f_N^{\lambda}$. Theoritical properties of kernel methods have been widely studied \cite{shawe2004kernel,scholkopf1999advances,steinwart2008support}.  Non-asymptotic bounds on the $L_2(\mu)$-\textit{error rate} $\|f^*-\hat f_N^{\lambda} \|_{L_2(\mu)}$, where $\mu$ denotes the marginal distribution of $X$, have been obtained for the quadratic loss function in \cite{mendelson2010regularization,smale2007learning,wu2006learning}. These bounds depend on the decay of eigenvalues of the kernel (at the population level) and are obtained for bounded continuous kernels but under the restrictive assumption that the random variable $Y \in [-M,M]$ almost surely. In~\cite{caponnetto2007optimal}, also for the quadratic loss function, the authors do not assume that $|Y|$ is bounded but that $Y-f^*(X)$ admits a Laplace transform. In this paper, we recover the same error rates as  \cite{mendelson2010regularization,caponnetto2007optimal} when the loss function $\ell$ is simultaneously Lipschitz and convex. We do not assume that $Y$ is bounded or $Y-f^*(X)$ is light-tailed. Our analysis uses a new localization technique developed in \cite{chinot} taking advantage of the convexity of the loss function $\ell$. Theorem~\ref{rkhs_informal} presents an informal result when $\ell$ is the absolute loss function. 
\begin{Theorem}[Informal] \label{rkhs_informal}
	Let $K$ be a bounded kernel. Assume that $Y = f^*(X) + W$ with $W$ a Cauchy random variable and $f^* \in \cH_K$, the RKHS associated with $K$. With probability larger than $1-\exp\big( -C_1N^{p/(p+1)} \big)$, for a well chosen value of $\lambda$ the estimator $\hat f $ associated to the absolute loss function defined in~\eqref{def1_rkhs} satisifies: 
	\begin{equation*}
		\|\hat f_N^{\lambda} - f^* \|_{L_2(\mu)}^2 \leq \frac{C_2}{N^{1/(1+p)}} \enspace,
	\end{equation*}
	where $C_1$ and $C_2$ are functions of the kernel and $\|f^*\|_{\cH_K}$. The value of $p \in (0,1)$ represents how fast the eigenvalues of the Kernel matrix decrease (see Section~\ref{sec_svm} for more precise arguments).
\end{Theorem}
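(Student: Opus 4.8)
The plan is to derive Theorem~\ref{rkhs_informal} as a particular instance of the general ``meta theorem'' for RERM announced above, instantiated with $F=\cH_K$, regularizer $\phi(\cdot)=\|\cdot\|_{\cH_K}^2$, and absolute loss $\ell(u,y)=|u-y|$. First I would record the two easy structural hypotheses: $\ell$ is convex and $1$-Lipschitz in its first argument; and the class $\{(f-f^*)(X):\|f\|_{\cH_K}\le\rho\}$ has the boundedness/sub-Gaussian property required by the meta theorem, which follows from the reproducing property and boundedness of $K$, since $|f(x)|=|\langle f,K(x,\cdot)\rangle_{\cH_K}|\le\|f\|_{\cH_K}\sqrt{K(x,x)}\le\rho\,\|K\|_\infty^{1/2}$ makes each such $f-f^*$ a bounded (hence sub-Gaussian with an absolute constant) variable.

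Next I would verify the \emph{local} Bernstein condition. Writing $\xi=Y-f^*(X)=W$ and $h=f-f^*$, the pointwise identity $|\xi-t|-|\xi|=\int_0^t(\dsone_{s>\xi}-\dsone_{s<\xi})\,\rmd s$ together with $F_W(0)=1/2$ gives
\[
\cR(f)-\cR(f^*)=\E\!\left[\,2\int_0^{h(X)}\bigl(F_W(t)-F_W(0)\bigr)\,\rmd t\,\right]\ \ge\ p_W(B)\,\|h\|_{L_2(\mu)}^2 ,
\]
where $F_W,p_W$ are the c.d.f.\ and density of $W$ and $B:=2\rho\|K\|_\infty^{1/2}$ bounds $\|f-f^*\|_\infty$ on $\{\|f\|_{\cH_K}\le\rho\}$ (assuming $\|f^*\|_{\cH_K}\le\rho$); the last inequality uses that $p_W$ is even and non-increasing on $[0,\infty)$, so $F_W(t)-F_W(0)\ge|t|\,p_W(B)$ for $|t|\le B$. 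For the Cauchy law $p_W(B)=(\pi(1+B^2))^{-1}>0$, so the Bernstein condition holds with exponent $1$ and a constant depending only on $\rho$ and $\|K\|_\infty$; crucially only the \emph{local} version ($\|h\|_{L_2(\mu)}$ small) is needed, which is exactly what the localization technique of \cite{chinot} exploits, letting one keep $\rho$ of the order of $\|f^*\|_{\cH_K}$.

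The remaining, and main, step is to compute the complexity parameter entering the meta theorem: the rate is the fixed point $r_N$ of the localized complexity of $\cH_K$, roughly the smallest $r$ with $\E\sup\{|N^{-1}\sum_{i=1}^N\eps_i h(X_i)|:\|h\|_{\cH_K}\le 2\rho,\ \|h\|_{L_2(\mu)}\le r\}\lesssim r^2$. I would bound the left-hand side by the classical spectral estimate $N^{-1/2}\bigl(\sum_{i\ge1}\min(\rho^2\mu_i,r^2)\bigr)^{1/2}$, with $(\mu_i)$ the eigenvalues of the integral operator of $K$ w.r.t.\ $\mu$; under the polynomial decay $\mu_i\asymp i^{-1/p}$ with $p\in(0,1)$ one has $\sum_i\min(\rho^2\mu_i,r^2)\asymp\rho^{2p}r^{2-2p}$, and solving $N^{-1/2}\rho^{p}r^{1-p}\asymp r^2$ yields $r_N^2\asymp\rho^{2p/(1+p)}N^{-1/(1+p)}$. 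Choosing $\lambda$ of the order prescribed by the meta theorem (balancing $\lambda\|f^*\|_{\cH_K}^2$ against $r_N^2$) and $\rho\asymp\|f^*\|_{\cH_K}$ then gives the announced $L_2(\mu)$-bound $C_2N^{-1/(1+p)}$, while the deviation $\exp(-C_1N^{p/(p+1)})$ appears because the concentration step of the meta theorem is run at scale $r_N$, where the effective sample size is of order $Nr_N^2\asymp N^{p/(p+1)}$.

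The hard part will be this last step: obtaining the \emph{localized} (not global) Rademacher/Gaussian complexity of RKHS balls in terms of the kernel eigenvalues, and solving the resulting fixed-point equation cleanly under $\mu_i\asymp i^{-1/p}$, all while tracking how the constants depend on $\|f^*\|_{\cH_K}$ and $\|K\|_\infty$ so that the final $C_1,C_2$ are functions of the kernel and $\|f^*\|_{\cH_K}$ alone. The Bernstein step is comparatively routine once boundedness of RKHS functions is used to localize; note that the Cauchy hypothesis enters only through $p_W(B)>0$, so any noise with a density bounded below near the origin would serve equally well.
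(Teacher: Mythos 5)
Your overall route matches the paper's: localize via the $\cH_K$-ball $\{\phi(f-f^*)\lesssim\phi(f^*)\}$ so that the functions $f-f^*$ are uniformly bounded by the reproducing property, verify the \emph{local} Bernstein condition using only that the Cauchy density is bounded below on the corresponding bounded range, bound the localized complexity by the spectral quantity $N^{-1/2}\bigl(\sum_k\min(\rho^2\lambda_k,r^2)\bigr)^{1/2}$, and solve the fixed point under $\lambda_k\lesssim k^{-1/p}$ to get $r_N^2\asymp\|f^*\|_{\cH_K}^{2p/(1+p)}N^{-1/(1+p)}$ with deviation $\exp(-cNr_N^2)=\exp(-cN^{p/(p+1)})$. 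These steps, including the constants' dependence on $\|K\|_\infty$ and $\|f^*\|_{\cH_K}$, are exactly what the paper does in Section~\ref{sec_svm} (Theorems~\ref{quantile_loss} and~\ref{thm_rkhs_quantile}).

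There is, however, a genuine gap in your concentration step. You feed the problem into the sub-Gaussian RERM meta theorem (Theorem~\ref{thm_erm_conv2}) by claiming that boundedness of $f-f^*$ on the localized class makes it ``sub-Gaussian with an absolute constant.'' This does not hold for the notion of sub-Gaussianity that the meta theorem actually uses: Assumption~\ref{ass:sub-gauss} requires $\bE\exp\bigl(\lambda|f(X)|/\|f\|_{L_2}\bigr)\le\exp(\lambda^2B^2/2)$, i.e.\ sub-Gaussianity \emph{relative to the $L_2$-norm}, uniformly over the class. A function bounded by $M$ in sup-norm is sub-Gaussian in this sense only with constant of order $\sqrt{M/\|f\|_{L_2}}$, and the localized class $F\cap(f^*+rB_{L_2})\cap B^{\phi}_{\rho}(f^*)$ contains functions with arbitrarily small $L_2$-norm but sup-norm up to $\sqrt{\rho\|K\|_\infty}$, so no uniform $B$ exists; this is precisely why the paper states that Assumption~\ref{ass:sub-gauss} cannot be verified for general RKHS and develops the separate analysis of Theorem~\ref{thm:erm_RKHS}, where the uniform deviation bound over the (bounded) localized class is obtained from Bousquet's version of Talagrand's inequality together with symmetrization/contraction, and the fixed point $\bar r(\cdot)$ is defined through the Rademacher complexity rather than the Gaussian mean-width. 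To repair your argument you should replace the appeal to the sub-Gaussian meta theorem by such a bounded-difference/Talagrand concentration argument (keeping your homogeneity and Bernstein steps unchanged); otherwise either the deviation probability or the rate degrades, because any honest sub-Gaussian constant for the localized class blows up like $1/r_N$ as $r_N\to0$.
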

Theorem~\ref{rkhs_informal} deals with a Cauchy noise but many different distributions can be handled with our analysis (see Theorem~\ref{thm_rkhs_quantile}). We obtain the same bounds as ~\cite{mendelson2010regularization,caponnetto2007optimal}. This is a first important contribution of this work. Fast rates for Kernel methods are derived even when the noise is heavy-tailed. Note also that nothing is assumed on the design $X$.\\

Kernel methods belong to the more general class of regularized methods, widespread in statistics and machine learning. 
These procedures date back to Tikhonov \cite{golub1979generalized}, and have been widely used in non-parametric statistics \cite{marsh2001spline,huang2003local} to smooth estimators. 
For example, the regularization $\phi(f) = \int (f'')^2 $ for spline estimators promotes smoothness by imposing regularity on the estimate. In kernel methods, the norm of a function in the RKHS controls how fast the function varies with respect to the geometry defined by the kernel. Consequently, the norm of regularization $\| \cdot \|_{\cH_K}$ is related with its degree of smoothness w.r.t. the metric defined by the kernel. Following the approach of~\cite{chinot}, we present an analysis for RERM with loss functions that are simultaneously Lipchitz and convex. The penalization function is not assumed to be a norm. It is simply required to be an even convex function. We derive bounds on the $L_2$-error and the excess loss for these general procedures. 
As far as we know, the only article considering a generic analysis of the RERM (with the quadratic loss) with a convex penalization is~\cite{lecue2017regularization}. However, their analysis does not hold for the square of a norm (see Assumption 5.1), which is a classical regularization methods in RKHS, see for instance~\cite{steinwart2008support}. By contrast, the new analysis presented in this paper covers many well-known methods such as kernel methods regularized by the square of a norm or the elastic net procedure \cite{zou2005regularization}.
The restriction here is that the loss function must be Lipschitz and convex. 
Both regression and classification problems can be addressed with our analysis. \\

Let $\cX, \cY$ be two measurable spaces such that $\cY \subset \bR$ and $(X,Y) \in \mathcal X \times \mathcal Y$ be random variables with joint distribution $P$. Let $\mu$ be the marginal distribution of $X$. For $E$ a linear subset of $L_2(X)$, let $F \subset E$ be a class of measurable functions $f: \cX \mapsto\bar{\cY}$ where $\bar{\cY} \subset \bR$ is convex (we do not have necessarily $\cY = \bar{\cY}$ for classification problems). In the standard learning framework, one would like to identify the best approximation to $Y$ using functions $f$ in the class $F$. To do so, let $\ell$ be a loss function, $\ell: F \times \mathcal X \times \mathcal Y \mapsto \bR$, $(f,x,y) \mapsto \ell_f(x,y) = \bar{\ell}(f(x),y)$ measuring the error made when predicting $y$ by $f(x)$, for $\bar{\ell}: \bar{\cY} \times \cY \mapsto \bR$. Let $f^{*} \in \argmin_{f \in F} R(f)$ where $R(f) := P\ell_f := \bE_P [ \ell_f(X,Y)] $. The \emph{oracle} $f^*$ provides the prediction of $Y$ with minimal risk among functions in $F$. Obviously, the distribution $P$ is unknown and minimizing the risk $R(f)$ over $f$ in $F$ is impossible in practice. Instead, one is given a dataset $\mathcal D = (X_i,Y_i)_{i=1}^N$ of random variables taking values in $\mathcal X \times \mathcal Y$. Using $\mathcal D$, the objective is to construct an estimator $\hat{f}_N$ such that the $L_2(\mu)$-\textbf{error rate}
\begin{equation*}
	\|\hat{f}_N-f^*\|_{L_2(\mu)} ^2= \bE \bigg[  \big(  \hat f_N (X) - f^*(X) \big)^2 | \cD  \bigg]
\end{equation*}
and the \textbf{excess risk}
\begin{equation*}
	P\cL_{\hat{f}_N} :=  (P\ell_{\hat f_N} - P\ell_{f^{*}}) | \cD = \bE_P \bigg[ \bar{\ell}(\hat f_N(X),Y) -  \bar{\ell}(f^*(X),Y) | \cD  \bigg] 
\end{equation*}
are small. While $P\cL_{\hat{f}_N}$ specifies the quality of prediction of the estimator $\hat{f}_N$, $	\|\hat{f}_N-f^*\|_{L_2(\mu)} $ quantifies the $L_2(\mu)$ approximation of the \emph{oracle} $f^*$ by the estimator $\hat f_N$. These two quantities being random, the results are derived with exponentially large probability. All along the paper, the following geometric Assumption is also granted. 
\begin{Assumption}\label{assum:convex}
	The class $F$ is convex.
\end{Assumption}
Assumption~\ref{assum:convex} imposes a geometric structure on the class $F$. This assumption is essential to use our “projection trick" and derive our main results. For example Assumption~\ref{assum:convex} holds when $F$ is a Hilbert space or the set of linear functionals in $\bR^p$, $F = \{ \inr{t,\cdot}: \; t \in \bR^p \}$. 
As in~\cite{chinot}, we consider Lipschitz and convex loss functions.
\begin{Assumption}\label{assum:lip_conv}
	There exists $L>0$ such that, for any $y \in \cY$, $\bar{\ell}(\cdot,y)$ is \textbf{$L$-Lipschitz} (see \eqref{Lip:cond}) and \textbf{convex} i.e for all $\alpha \in [0,1], (x,y) \in \cX \times \cY$ and $f,g \in F$, $\bar{\ell}(\alpha f(x) + (1-\alpha) g(x),y) \leq \alpha \bar{\ell}(f(x),y) + (1-\alpha )\bar{\ell}(g(x),y)$ 
\end{Assumption}
Assumption~\ref{assum:lip_conv} is satisfied in several examples, let us provide a short list of some of them.
\begin{itemize}
	\setlength{\itemsep}{1pt}
	\setlength{\parskip}{0pt}
	\setlength{\parsep}{0pt}
	\item The \textbf{logistic loss} defined, for any $u\in\bar{\cY}=\R$ and $y\in\cY=\{-1, 1\}$, by 
	$\ell(u,y) = \log(1+\exp(-yu))$ satisfies Assumption~\ref{assum:lip_conv} with $L=1$.
	\item The \textbf{hinge loss} defined, for any $u\in\bar{\cY}=\R$ and $y\in\cY=\{-1, 1\}$, by $\ell(u,y) = \max(1-uy,0)$ satisfies Assumption~\ref{assum:lip_conv} with $L=1$.
\end{itemize}
In those examples, the sets $\cY$ and $\bar{\cY}$ are different. The fact that every function $f$ in $F$ maps to the convex set $\bar{\cY}$ is crucial for the computation of the estimator $\hat{f}_N$ in practice~\cite{zhang2004statistical,bartlett2006convexity}
\begin{itemize}
	\item  The \textbf{Huber loss} defined, for any $\delta >0$, $u,y\in\cY=\bar{\cY}=\R$, by
	\[
	\ell(u,y) =  
	\begin{cases}
	\frac{1}{2}(y-u)^2&\text{ if }|u-y| \leq \delta\\
	\delta|y-u|-\frac{\delta^2}{2}&\text{ if }|u-y| > \delta
	\end{cases}\enspace,
	\]
	satisfies  Assumption~\ref{assum:lip_conv} with $L=\delta$.
	\item The \textbf{quantile loss} is defined, for any $\tau \in (0,1)$, $u,y\in\cY=\bar{\cY}=\bR$, by
	$\ell(u,y)  = \rho_{\tau}(u-y)$ where, for any $z\in \R$, $\rho_{\tau}(z) = z(\tau-I \{ z \leq 0\})$. It satisfies Assumption~\ref{assum:lip_conv} with $L=1$. For $\tau = 1/2$, the quantile loss is the $L_1$ loss.
	\item The \textbf{Hinge loss for regression} is defined for any $u,y\in\cY=\bar{\cY}=\bR$, by
	$\ell(u,y)  = \max(y-u,0)$. It satisfies Assumption~\ref{assum:lip_conv} with $L=1$. Note that the Hinge loss function is modified for regression problems. 
\end{itemize}
Classical results on the RERM in learning theory consider the quadratic loss function \cite{mendelson2014learning,lecue2017regularization,lecue2018regularization}. In this case $\bar \ell(u,v) = (u-v)^2/2$ for any $(u,v) \in \bar \cY \times \cY$. The starting point of their analysis is the following mutliplier/quadratic decomposition
\begin{equation*}
	\cL_f(X,Y) = (f(X)-Y)^2 - (f^*(X)-Y)^2 = (f(X)-f^*(X))^2 + 2 (f^*(X)-Y)(f(X)-f^*(X)) 
\end{equation*}
for any $f$ in $F$.
While the quadratic process $f \mapsto (f(X)-f^*(X))^2$ does not depend on the target $Y$, the multiplier process $f \mapsto (f^*(X)-Y)(f(X)-f^*(X))$ depends on the ``noise" $Y-f^*(X)$.
It can only be controlled under some restriction on this ``noise". For example, when $Y = g(X) +W$, where $g: \cX \mapsto \bR$ is a function in $F$ and $W$ is a random variable independent to $X$, we have $g = f^*$ and thus $Y-f^*(X) = W$. In this problem, bounding the multiplier process requires strong moment assumptions on the noise $W$ (see Theorem 1.2 in~\cite{mendelson2017multiplier}). If we replace the quadratic loss function by the absolute loss and if the noise is symmetric and independent to $X$ we also have $f^* = g$. In this case, from the Lipschitz property,
\begin{equation} \label{Lip:cond}
\forall (x,y) \in \cX \times \cY \mbox{   and   } f,g \in F, \quad    |\bar{\ell}(f(x),y) - \bar{\ell}(g(x),y) | \leq L |f(x)-g(x)|  \quad \mbox{ for   }L>0 \enspace,
\end{equation}
the multiplier process disappears. It becomes possible to handle heavy-tailed symmetric noise $W$. 
From~\eqref{Lip:cond}, note also that the random variable $Y$ does not need to be integrable. For instance, $W$ can be a Cauchy distribution. \\

To get fast rates of convergence, our analysis is based on the following local Bernstein condition
\begin{equation*}
	\forall f \in F: \|f-f^*\|_{L_2(\mu)} = r \mbox{   and   } \phi(f-f^*) \leq \rho, \quad AP\cL_f \geq  \|f-f^*\|_{L_2(\mu)}^2
\end{equation*}
where $r,\rho>0$. In the sequel, we have respectively $r$ and $\rho$ of the order of the error rate and $\phi(f^*)$, where we recall that $\phi(\cdot)$ is the regularization function and $f^*$ the oracle. This condition states that the excess risk $f \mapsto P\cL_f$ is $1/A$-strongly convex in a neighborhood of the oracle $f^*$. This new \textbf{local} Bernstein condition introduced in~\cite{chinot} is the cornerstone to obtain fast rates of convergence for settings where the noise may be heavy-tailed. Contrary to the analysis for the quadratic loss function, no Small Ball assumption is required~\cite{mendelson2014learning,lecue2017regularization}. In addition to handle heavy-tailed noise, the use of Lipschitz function significantly simplifies the proof since only one process has to be considered. The main argument of the proof is a new “projection trick" (see the sketch of proof in Section~\ref{sec_erm}) making the proof simpler. For example, no peeling technic is required. To summarize, the \textbf{contributions} of our new analysis for the RERM are the following
\begin{itemize}
	\setlength{\itemsep}{1pt}
	\setlength{\parskip}{0pt}
	\setlength{\parsep}{0pt}
	\item We consider very general convex regularization functions $\phi(\cdot)$.
	\item For Lipschitz and convex loss function, heavy-tailed noise can be handled.
	\item Our proof relies on a convex argument simple to understand.
\end{itemize}
The RERM are robust with repsect to the noise of the problem as long as the loss function is Lipschitz. However a single outlier in the $X_i$ may make the RERM really bad. In addition, the RERM performs well only when the empirical excess of risk $f \mapsto P_N \cL_f$ uniformly concentrates around its expectation $f \mapsto P\cL_f$. To do so, it is necessary to impose a strong concentration assumption on the class $ \{  \cL_f(X,Y), f \in F\}$. From Assumption~\ref{assum:lip_conv} it is implied by a concentration assumption on the class $\{(f - f^*)(X), f \in F \}$. Consequently, sub-Gaussian or boundedness assumptions are necessary on the class $\{(f-f^*)(X), f \in F  \}$ to obtain an exponentially large confidence for RERM.\\

RERM serves as benchmark for more advanced estimators. In a second time, we study regularized minmax MOM-estimators introduced in~\cite{lecue2017robust} for least-squares regression as an alternative to other MOM-based procedures \cite{LugosiMendelson2016, LugosiMendelson2017, lugosi2019sub,lecue2017learning}. In the case of convex and Lipschitz loss functions, these estimators satisfy the following properties 1) as the RERM, they are efficient under weak assumptions on the noise 2) they achieve optimal rates of convergence under weak stochastic assumptions on the class $ \{  \cL_f(X,Y), f \in F\}$ and 3) the rates are not downgraded by the presence of some outliers in the dataset. These results are not surprising since it has already been observed in \cite{lecue2017robust,chinot}. Although attractive, mimmax MOM-estimators present some drawbacks. Their construction depends on the confidence level (through $K$). Under stronger moment assumptions,~\cite{minsker2018uniform} proposed a construction of MOM-based estimators independent to the confidence level. The  implementation of MOM-based estimators is still an open question even if good empirical results have been obtained in~\cite{lecue2017robust,lecue2018robust,chinot}.\\

The main theorems (for the RERM and the minimax MOM estimators) are general and can be applied for different applications. In particular, we study 1) the Elastic net regularization for linear estimators in $\bR^p$ and 2) kernel methods in RKHS associated to a bounded kernel. In particular, we extend the results from~\cite{mendelson2010regularization,smale2007learning,wu2006learning,caponnetto2007optimal} for heavy-tailed noise.\\

To summarize, the \textbf{contributions of this paper} are the following:
\begin{itemize}
	\setlength{\itemsep}{1pt}
	\setlength{\parskip}{0pt}
	\setlength{\parsep}{0pt}
	\item We obtain an analysis for the RERM for general convex regularization functions under weak assumptions on the noise. This analysis is based on a local Bernstein assumption and holds under a strong concentration assumption on the class $\{(f-f^*)(X), f \in F  \}$.
	\item Under the same local Bernstein assumption, we study minimax MOM estimators and show that 1) as the RERM, they are efficient under weak assumptions on the noise 2) they achieve optimal rates of convergence under weak stochastic assumptions on the class $\{(f-f^*)(X), f \in F  \}$ and 3) the rates are not downgraded by the presence of some outliers in the dataset
	\item We apply this analysis to linear estimators regularized with elasitc net.
	\item Under the same local Bernstein assumption, with a slighlty different concentration argument, we study regularized learning problems in RKHS. The noise can be heavy-tailed and no sub-Gaussian on $\{(f-f^*)(X), f \in F  \}$ is required to get fast rates of convrgence. 
\end{itemize}

The paper is organized as follow. In Section~\ref{sec_erm} and~\ref{sec_mom} we respectively present general results for RERM and minmax MOM estimators. Section~\ref{sec_applications} is devoted to the application of our main theorems for the problems of linear estimators regularized with elastic net and Support vector machines. Section~\ref{sec_proof}-~\ref{app_supp} gather the proofs of the main theorems.

\paragraph{Notations:}
In the remaining of the paper, the following notations will be used repeatedly. We will write $L_2$ instead of $L_2(\mu)$, let $r>0$,
\[
rB_{L_2} = \{ f \in F: \|f(X)\|_{L_2(\mu)} \leqslant r \},\quad rS_{L_2} = \{ f \in F: \|f(X)\|_{L_2(\mu)} = r \}\enspace. 
\]
For any set $H$ for which it makes sense, let $H + f^{*} = \{ h+f^{*} \mbox{ s.t } h \in H   \}$, $H - f^{*} = \{ h-f^{*} \mbox{ s.t } h \in H   \}$. The notations $a \vee b$ and $a \wedge b$, will denote respectively $\max(a,b)$ and $\min(a,b)$.

\section{Regularized Empirical Risk Minimization (RERM) } \label{sec_erm}
All along this section, data $(X_i,Y_i)_{i=1}^N$ are \textbf{independent and identically distributed} with common distribution $P$. The unknown risks are estimated by their empirical counterparts, and the oracle is estimated by the \emph{empirical risk minimizer} (ERM) (see~\cite{MR2829871}), defined by 
\begin{equation*} 
	\hat{f}^{ERM} = \argmin_{f \in F} P_N \ell_f := \frac{1}{N} \sum_{i=1}^N \bar \ell(f(X_i),Y_i) \enspace.
\end{equation*}
Clearly, if the class $F$ is too small, there is no hope that $f^*(X)$ is close to $Y$. One has to consider large classes leading to large error rates. To bypass the fact that $F$ may be very large, we can use the classical approach of \emph{regularization} where the penalization function emphasizes the belief we may have on the \emph{oracle} $f^*$. It leads to the Regularized Empirical Risk Minimizer (RERM) defined as
\begin{equation} \label{def_erm}
\hat{f}^{RERM}_{\lambda} = \argmin_{f \in F} P_N \ell_f  + \lambda  \|f\| \enspace,
\end{equation}
where $\|\cdot\|: E \mapsto \bR^+$ is a norm. 
However, the estimators $\hat{f}^{RERM}_{\lambda} $ defined in~\eqref{def_erm} are rather restrictive since it does not cover penalizations which are not a norm such as $\|f\|_{\cH_K}^2$ (i.e the square of the norm in a reproducible Kernel Hilbert space) or the Elastic net procedure (see~\cite{zou2005regularization}). To bypass this limitation, the estimator defined in Equation~\eqref{def_erm} will be replaced by
\begin{equation} \label{def_erm2}
\hat{f}^{\phi}_{\lambda} = \argmin_{f \in F} P_N \ell_f  + \lambda  \phi(f) :=  \argmin_{f \in F} P_N \cL_f^{\lambda}
\end{equation}
where $\phi : E \mapsto \bR^+$ is a function satisfying the following Assumption.
\begin{Assumption} \label{assum:phi} Let $\phi: E \mapsto \bR^+$ be a real function such that
	\begin{itemize}
		\setlength{\itemsep}{1pt}
		\setlength{\parskip}{0pt}
		\setlength{\parsep}{0pt}
		\item $\phi$ is even,  convex and $\phi(0) = 0$
		\item 	There exists a constant $\eta >0$ such that for all $f,g \in F$ 
		\begin{equation}
		\phi(f+g) \leq \eta \big(\phi(f) + \phi(g) \big)
		\end{equation}
	\end{itemize}
\end{Assumption}
Assumption~\ref{assum:phi} holds for any norm but also for the square of a norm (with $\eta =2)$, the elasitc net penalization (with $\eta = 2$) defined for any $t$ in $\bR^p$ as $\phi(t) = (1-\alpha)\|t\|_1 + \alpha \|t\|_2^2$, where $\alpha \in [0,1]$, $\|t\|_1 = \sum_{i=1}^p |t_i|$ and $\|t\|_2^2 = \sum_{i=1}^p t_i^2$. To control the $L_2$-error rates for the RERM, it is necessary to impose a \textbf{concentration assumption} on the class $ \{  \cL_f(X,Y), f \in F\}$. From Assumption~\ref{assum:lip_conv} it is implied by a concentration assumption on the class $\{(f - f^*)(X), f \in F \}$ (this assumption will be relaxed using MOM-type estimators in Section~\ref{sec_mom}).
\begin{Definition}
	A class $F$ is called $B$ sub-Gaussian (with respect to $X$) for some constant $B \geq 0$ when for all $f$ in $F$ and for all $\lambda >1$
	\begin{equation*}
		\mathbb E \exp( \lambda |f(X)|/ \|f\|_{L_2} ) \leq \exp(\lambda^2 B^2/2)\enspace.
	\end{equation*}
\end{Definition}
\begin{Assumption}  \label{sub_ass}
	\label{ass:sub-gauss} The class $F-f^{*}$ is $B$ sub-Gaussian.
\end{Assumption}
For example, when $F$ is the class of linear functionals in $\bR^p$, $F = \{  \inr{\cdot,t}, \; t \in T \}$ for $T \subset \bR^p$, $F-f^*$ is $1$ sub-Gaussian if $X \sim \cN(0,\Sigma)$ or if $X = (x_j)_{j=1}^p$ has independent coordinates that are $1$ sub-Gaussian. In the sub-Gaussian framework, a natural way to measure the \emph{statistical complexity} of the function class $F$ is via the Gaussian mean-width that we introduce now.
\begin{Definition}\label{def:gauss_mean_width}
	Let $H\subset L_2$ and $(G_h)_{h\in H}$ be the canonical centered Gaussian process indexed by $H$, with covariance structure
	\[
	\forall h_1,h_2\in H, \qquad \left(\E (G_{h_1}- G_{h_2})^2\right)^{1/2} = \left(\E(h_1(X)-h_2(X))^2\right)^{1/2}\enspace. 
	\]
	The \textbf{Gaussian mean-width} of $H$ is $w(H) = \E \sup_{h\in H} G_h$.
\end{Definition}
For example, when $	F = \{  \inr{\cdot,t}, \; t \in \bR^p \}$, and $X \sim \cN(0,\Sigma)$, $w(T) = \bE \sup_{t \in T} \inr{G,t}$, where $T$ is a subset of $\bR^p$ and $G \sim  \cN(0,\Sigma)$. The Gaussian mean-width is closely related with metric complexities such as the entropy through the Sudakov's inequality, see Chapter 1 in~\cite{chafai2012interactions} for precise inequalities.\\
Following ideas developed in~\cite{lecue2017learning,lecue2017regularization,lecue2018regularization,mendelson2014learning}, the complexity parameter driving the statistical behavior of the estimator $\hat{f}^{\phi}_{\lambda}$ is defined as a fixed point depending on the Gaussian mean-width:
\begin{Definition}\label{def:function_r}
	The complexity is measured via a non-decreasing function $r(\cdot)$ such that for every $A >0$, 
	\begin{equation*}
		r(A) = \inf \bigg\{ r> 0: \;  32LB w \big(F \cap B_{\eta(4+2A^{-1}) \phi(f^*)}^{\phi}(f^*)  \cap (f^*+r B_{L_2})  \big) \leq   (2A)^{-1}\sqrt{N}r^2   \bigg\} 
	\end{equation*}
	where $B_{\delta}^{\phi}(g) =  \{f \in F : \; \phi(f-g) \leq \delta \}$ , $L$ is the Lipschitz constant of Assumption~\ref{assum:lip_conv}, $B$ is the sub-Gaussian constant defined in Assumption~\ref{ass:sub-gauss} and $\eta$ is defined in Assumption~\ref{assum:phi}.
\end{Definition}
Note that when $\phi$ is a norm, $B_{\delta}^{\phi}(g)$ simply corresponds to the ball of regularization centered in $g$ with radius $\delta$. We are now in position to introduce the \textbf{local Bernstein condition} allowing to derive fast rates of convergence for heavy-tailed problem. 
\begin{Assumption}\label{assum:fast_rates} There exists a constant $A^* > 0$ such that for all $f\in F$ if $\norm{f-f^*}_{L_2} = r(A^*)$ and $\phi(f-f^*) \leq \eta(4+2(A^*)^{-1}) \phi(f^*)$ then $\|f-f^{*}\|_{L_2}^2\leqslant A^*P\mathcal{L}_f $. 
\end{Assumption}
In the sequel of this section we will write $r^*$ instead of $r(A^*)$. 
Condition~\ref{assum:fast_rates} states that $f \mapsto P\cL_f$ is $1/A^*$-strongly convex in a subset of the $L_2$-sphere centered in $f^*$ with radius $r^*$. 
As explained in \cite{chinot}, this local Bernstein condition holds in examples where $F$ is not bounded in $L_2$-norm, and therefore, where the global Bernstein condition of \cite{pierre2019estimation}( $\|f-f^{*}\|_{L_2}^2\leqslant A^*P\mathcal{L}_f$ for all $f\in F$)  does not hold. Assumption~\ref{assum:fast_rates} replaces the small-ball Assumption (see \cite{mendelson2014learning} for instance) for learning problems with Lipschitz and convex loss functions.
In~\cite{chinot}, the authors consider non-regularized problems where the local Bernstein condition is required over the whole $L_2$-sphere of radius $r^*$. For regularized-procedure, this condition is required only for functions $f$ in this $L_2$-sphere of radius $r^*$ such that $\phi(f-f^*) \leq \eta(4+2(A^*)^{-1}) \phi(f^*)$. For instance, in the case of RKHS associated to a bounded kernel $K$, the condition $\phi(f-f^*) \leq \rho$, for $\rho>0$ implies that the function $f-f^*$ are bounded by $\sqrt{\rho \|K\|_{\infty}}$ (see Section~\ref{sec_svm}). This localization with respect to the regularization norm is essential to verify the local Bernstein Assumption in practice and obtain fast rates of convergence (see Section~\ref{sec_svm}).    \\
We are now in position to present the main theorems of this section. 

\begin{Theorem}\label{thm_erm_conv2}
	Grant Assumptions~\ref{assum:lip_conv},~\ref{assum:convex},~\ref{assum:phi},~\ref{ass:sub-gauss} and ~\ref{assum:fast_rates}. With probability larger than
	\begin{equation}\label{eq:proba}
	1-2\exp\bigg(-  \frac{N (r^*)^2}{4(32A^*LB)^2}  \bigg)
	\end{equation}
	for all regularization parameters $\lambda \geq \lambda_0 = (r^*)^2 / \phi(f^*) $ the estimator $\hat{f}^{\phi}_{\lambda}$ defined in Equation~\eqref{def_erm2} satisfies
	\begin{align*}
		\|\hat{f}^{\phi}_{\lambda} - f^{*}\|_{L_2} \leq  (4+6A^*)\lambda \frac{\phi(f^*)}{r^*}\\  \mbox{and} \quad  \phi(\hat{f}^{\phi}_{\lambda} - f^{*}) \leq  (4+2/A^*)\eta   \phi(f^*).  
	\end{align*}
\end{Theorem}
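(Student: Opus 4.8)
The plan is to combine the convexity-based \emph{projection trick} of~\cite{chinot} with a uniform control of the multiplier process $f\mapsto(P_N-P)\cL_f$, and to use the local Bernstein condition (Assumption~\ref{assum:fast_rates}) to convert a lower bound on the risk $P\cL_f$ into one on the empirical risk $P_N\cL_f$. Throughout write $\rho^*:=\eta(4+2/A^*)\phi(f^*)$, which is exactly the $\phi$-radius appearing in Definition~\ref{def:function_r} and in the statement. I would first record two elementary consequences of Assumption~\ref{assum:phi}: since $\phi$ is convex, even and $\phi(0)=0$, the map $t\mapsto\phi(tz)$ is non-decreasing on $[0,+\infty)$ with $\phi(tz)\le t\phi(z)$ for $t\in[0,1]$; hence, using Assumption~\ref{assum:convex}, the set $T:=(F-f^*)\cap\{\phi\le\rho^*\}$ is star-shaped around $0$, so $R\mapsto w(T\cap RB_{L_2})/R$ is non-increasing. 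In particular the defining inequality of $r^*=r(A^*)$, namely $32LB\,w(T\cap r^*B_{L_2})\le(2A^*)^{-1}\sqrt N(r^*)^2$, controls $w(T\cap RB_{L_2})$ for every $R\ge r^*$.

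The probabilistic heart of the argument, and the step I expect to be the \textbf{main obstacle}, is to show that on an event $\Omega$ of probability at least~\eqref{eq:proba} one has, for every $f\in F$ with $\phi(f-f^*)\le\rho^*$,
\[ |(P_N-P)\cL_f|\ \le\ \tfrac{r^*}{4A^*}\bigl(\|f-f^*\|_{L_2}\vee r^*\bigr). \]
By Assumption~\ref{assum:lip_conv}, $|\cL_f(X,Y)|\le L|f(X)-f^*(X)|$, so after symmetrisation a contraction argument bounds the expected supremum of $(P_N-P)\cL_f$ over $f^*+(T\cap RB_{L_2})$ by a multiple of $LB\,w(T\cap RB_{L_2})/\sqrt N$, the sub-Gaussian Assumption~\ref{ass:sub-gauss} being used to pass from a Rademacher average to the Gaussian mean width; by the homogeneity of $w$ this is $\lesssim Rr^*/A^*$. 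A concentration inequality for this supremum — admissible under the sub-Gaussian assumption, the weak variance being of order $(LBR)^2$ on $T\cap RB_{L_2}$ — then upgrades the bound in expectation to the high-probability bound, the factor $R$ cancelling between the variance term and the target level, which is why~\eqref{eq:proba} is independent of $\lambda$. The delicate points are to make the contraction reproduce exactly the Gaussian mean-width of Definition~\ref{def:function_r}, to obtain the sub-Gaussian deviation with the precise constant of~\eqref{eq:proba}, and to have the bound hold uniformly over the $\lambda$-dependent radius.

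Next I would combine Assumption~\ref{assum:fast_rates} with convexity of the excess risk: local Bernstein gives $P\cL_g\ge(r^*)^2/A^*$ whenever $\|g-f^*\|_{L_2}=r^*$ and $\phi(g-f^*)\le\rho^*$, and since $f\mapsto P\cL_f$ is convex with $P\cL_{f^*}=0$, shrinking any $f$ with $\|f-f^*\|_{L_2}\ge r^*$ and $\phi(f-f^*)\le\rho^*$ towards $f^*$ (which keeps it in $T$ by star-shapedness) yields $P\cL_f\ge\|f-f^*\|_{L_2}\,r^*/A^*$; moreover $P\cL_f\ge0$ always.

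Finally, the deterministic argument on $\Omega$. Put $R:=(4+6A^*)\lambda\phi(f^*)/r^*$; since $\lambda\ge\lambda_0=(r^*)^2/\phi(f^*)$ we get $R\ge(4+6A^*)r^*>r^*$. The map $f\mapsto P_N\cL_f^\lambda=P_N\cL_f+\lambda(\phi(f)-\phi(f^*))$ is convex (Assumptions~\ref{assum:lip_conv} and~\ref{assum:phi}), vanishes at $f^*$, and is $\le0$ at $\hat f:=\hat f^\phi_\lambda$ by~\eqref{def_erm2}. Suppose $\|\hat f-f^*\|_{L_2}>R$ or $\phi(\hat f-f^*)>\rho^*$. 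Since $\alpha\mapsto\alpha\|\hat f-f^*\|_{L_2}$ and $\alpha\mapsto\phi(\alpha(\hat f-f^*))$ are continuous, non-decreasing and vanish at $0$, there is $\bar\alpha\in(0,1)$ such that $\bar f:=f^*+\bar\alpha(\hat f-f^*)$ (which lies in $F$ by Assumption~\ref{assum:convex}) sits on the boundary of $f^*+(RB_{L_2}\cap B^\phi_{\rho^*})$, and convexity along the segment gives $P_N\cL_{\bar f}^\lambda\le0$, i.e. $P_N\cL_{\bar f}\le\lambda(\phi(f^*)-\phi(\bar f))$. If $\|\bar f-f^*\|_{L_2}=R$ and $\phi(\bar f-f^*)\le\rho^*$, the last two paragraphs give $P_N\cL_{\bar f}\ge Rr^*/A^*-Rr^*/(4A^*)=\tfrac34 Rr^*/A^*$, while $\phi\ge0$ forces $P_N\cL_{\bar f}\le\lambda\phi(f^*)=Rr^*/\bigl((4+6A^*)\bigr)\cdot(1/r^*)$ wait, $=Rr^*/(4+6A^*)\cdot r^*/r^*$, i.e. $\lambda\phi(f^*)=Rr^*/(4+6A^*)$; since $\tfrac34(4+6A^*)/A^*>1$ this is impossible. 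If instead $\phi(\bar f-f^*)=\rho^*$ and $\|\bar f-f^*\|_{L_2}\le R$, the second item of Assumption~\ref{assum:phi} (with $\phi$ even) gives $\phi(\bar f)\ge\rho^*/\eta-\phi(f^*)=(3+2/A^*)\phi(f^*)$, hence $P_N\cL_{\bar f}\le-\lambda(2+2/A^*)\phi(f^*)<0$, contradicting $P_N\cL_{\bar f}\ge-Rr^*/(4A^*)$ since $2+2/A^*>(4+6A^*)/(4A^*)$ for every $A^*>0$. Therefore $\hat f\in f^*+(RB_{L_2}\cap B^\phi_{\rho^*})$, which is exactly the two claimed bounds; the numerical constants in Definition~\ref{def:function_r} and in~\eqref{eq:proba} are precisely what makes both of the above inequalities strict.
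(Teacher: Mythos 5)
Your deterministic half is sound and, given your event, the constants do come out right: shrinking $\hat f^{\phi}_{\lambda}$ to the boundary of $\bigl(f^*+RB_{L_2}\bigr)\cap B^{\phi}_{\rho^*}(f^*)$ with $R=(4+6A^*)\lambda\phi(f^*)/r^*$, and using the quasi-triangle inequality of Assumption~\ref{assum:phi} in one case and a population-level Bernstein-plus-convexity bound $P\cL_{\bar f}\geq \|\bar f-f^*\|_{L_2}\,r^*/A^*$ in the other, is a legitimate variant of the paper's argument (the paper runs the same projection idea, but on the \emph{empirical} excess risk through the functions $\psi_i$ of \eqref{eq:fct_psi}). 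The genuine gap is exactly the event you flag as the "main obstacle": you need $|(P_N-P)\cL_f|\leq \frac{r^*}{4A^*}\bigl(\|f-f^*\|_{L_2}\vee r^*\bigr)$ \emph{uniformly over the whole $\phi$-ball, at all $L_2$-radii}, because $R$ grows with $\lambda$ and the theorem is claimed simultaneously for all $\lambda\geq\lambda_0$ on a single event. Your sketch does not deliver this. First, contraction does not pass through the normalization: $\cL_f$ is not positively homogeneous in $f-f^*$ (only $\alpha P_N\cL_{f_0}\le P_N\cL_{f}$ for $\alpha\ge1$ pointwise along rays, which is the paper's trick, not an identity for the ratio process), so the supremum of the normalized process is not reducible to the Gaussian mean-width of $\frac{1}{r^*}(T\cap r^*B_{L_2})$ by a single symmetrisation–contraction step. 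Second, the standard remedy is peeling over shells $2^k r^*$, but at each shell the required deviation is proportional to $2^k r^*\cdot r^*$ while the variance is of order $(LB\,2^k r^*)^2$, so the exponent is the \emph{same} for every shell; a union bound over the (possibly infinitely many, since the $\phi$-ball need not be bounded in $L_2$) shells then does not give the probability \eqref{eq:proba} as stated — you would have to inflate the deviation with $k$ and degrade either the constants or the radius range, and none of this is carried out.

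This is precisely what the paper's proof is engineered to avoid. Its event $\Omega$ only controls $|(P-P_N)\cL_f|\le (r^*)^2/(2A^*)$ on the single, $\lambda$-independent set $F\cap(f^*+r^*B_{L_2})\cap B^{\phi}_{\rho^*}(f^*)$, with one application of the sub-Gaussian deviation bound (Lemma~\ref{lem:subgauss}) at $u=\theta\sqrt N r^*/(32LB)$, which is where \eqref{eq:proba} comes from. Functions with $\|f-f^*\|_{L_2}>r^*$ are never touched stochastically: a second projection onto the $r^*$-sphere gives $P_N\cL_{f_0}\ge\alpha_1 P_N\cL_{f_1}$ with $P_N\cL_{f_1}\ge (1/A^*-\theta)(r^*)^2>0$ on $\Omega$, and this positive empirical lower bound propagates outward multiplicatively. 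If you want to keep your population-level route, you must either prove your multi-scale ratio bound honestly (peeling with radius-dependent deviations, accepting a modified probability), or restructure the argument so that the stochastic control is only needed at the fixed radius $r^*$ — i.e., essentially fall back on the paper's empirical projection trick.
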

\begin{Remark}
	Theorem~\ref{thm_erm_conv2} holds for an exponentially large probability~\eqref{eq:proba} simultaneously for all $\lambda \geq \lambda_0$. As a consequence it can be used with a random choice of regularization parameter $\hat{\lambda}$ as long as $\{  \hat{\lambda} \geq \lambda_0 \}$ hold with large probability. For example, we could use a cross validation scheme to generate $\hat{\lambda}$.
\end{Remark}

Note that for $\lambda = \lambda_0$, we obtain $	\|\hat{f}^{\phi}_{\lambda} - f^{*}\|_{L_2} \leq  (4+6A^*)r^*$, which is the minimax rate into the class $\{f \in F: \phi(f) \leq \phi(f^*) \}$ (see \cite{lecue2017regularization}). Since we do not have access to $\phi(f^*)$, taking $\lambda_0$ is impossible. To bypass this issue we use a Lepski's adaptation method (see \cite{lepskii1992asymptotically, lepskii1993asymptotically,birge2001alternative}). To do so, the following assumption is required.
\begin{Assumption} \label{assum_bounded}
	There exists $M > 0$ such that $\phi(f^*) \leq M$.
\end{Assumption}	
Assumption~\ref{assum_bounded} is natural since regularization procedures are used when one believes that $\phi(f^*)$ is small. Since Theorem~\ref{thm_erm_conv2} holds with the same probability for all  $\lambda \geq \lambda_0$, one can choose $M$ very large in the Lepski's method without deteriorating the probability of the event. \\
For $j= 1, \cdots, J = M + \lceil \log_2(M) \rceil$, let us define $\phi_j = 2^j/2^M$, $\phi_0=0$ and $\lambda_j = r_j^2/\phi_j$ where 
\begin{equation*}
	r_j = \inf \big\{ r> 0: \;  32LB w \big(F \cap B_{\eta(4+2(A^*)^{-1}) \phi_j}^{\phi}(f^*)  \cap (f^*+r B_{L_2}) \big) \leq (2A^*)^{-1} \sqrt{N}r^2   \big\} 
\end{equation*}
Moreover for all $\lambda >0$ let us define 
\begin{gather*}
	T_{\lambda}(f) = P_N(\ell_f - \ell_{\hat{f}^{\phi}_{\lambda}}) + \lambda \big( \phi(f)- \phi(\hat{f}^{\phi}_{\lambda})  \big), \quad \hat{R}_j = \{ f \in F: \; T_{\lambda_j}(f) \leq \big( (A^*)^{-1} + 2 \big) \lambda_j \phi_j    \} 
	\\  k^* = \inf \{ k \in \big\{ 1, \cdots,J \}: \; \cap_{j \geq k}^{J} \hat{R}_j \neq \emptyset \big\} \quad \mbox{and set} \quad \tilde{f} \in \cap_{j \geq k^*}^{J} \hat{R}_j \enspace.
\end{gather*}
Using the Lepski's method we are in position to state to following theorem.
\begin{Theorem} \label{theo_lepski}
	Assumptions~\ref{assum:lip_conv},~\ref{assum:convex},~\ref{assum:phi},~\ref{ass:sub-gauss},~\ref{assum:fast_rates} and~\ref{assum_bounded},  with probability larger than 
	\begin{equation*}
		1-2\exp\big(-  \frac{N (r^*)^2}{4(64A^*LB(8+12A^*))^2} \big)
	\end{equation*}
	\begin{align*}
		& \| \tilde f - f^{*}\|_{L_2} \leq (8+12A^*) r^*, \quad  \phi(\tilde f - f^{*}) \leq   (4+2/A^*)\eta  \phi(f^*)\\
		&  \mbox{and} \quad P\cL_{\tilde{f}} \leq (4+3/A^*)(r^*)^2 \enspace.  
	\end{align*}
\end{Theorem}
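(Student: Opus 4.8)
The plan is a Lepski balancing argument built on top of Theorem~\ref{thm_erm_conv2}, re-invoked at each admissible scale $\phi_j$. I would first fix $j^*$, the smallest $j\in\{1,\dots,J\}$ with $\phi_{j^*}\geq\phi(f^*)$; such an index exists because $\phi_0=0<\phi(f^*)\leq M\leq\phi_J$ by Assumption~\ref{assum_bounded} and the choice of $J$, and since consecutive $\phi_j$ differ by a factor $2$ one also has $\phi_{j^*}\leq 2\phi(f^*)$ (the degenerate case $\phi(f^*)<2^{1-M}$, forcing $j^*=1$, only improves the constants). Monotonicity of the fixed-point map $\rho\mapsto r_\rho$ then gives $r^*\leq r_{j^*}$ and, thanks to $\phi_{j^*}\leq 2\phi(f^*)$, $r_{j^*}\leq c_{A^*,\eta}\,r^*$, so $\lambda_{j^*}\phi_{j^*}=r_{j^*}^2$ is of the order of $(r^*)^2$. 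Working on the good event $\Omega$ on which the empirical process estimates underlying the proof of Theorem~\ref{thm_erm_conv2} hold simultaneously at every $L_2$-radius up to $(8+12A^*)r^*$ — one event suffices rather than a union over the $J$ scales because those estimates are monotone in the radius — Theorem~\ref{thm_erm_conv2} invoked at each level $\phi_j$ with $j\geq j^*$ controls $\hat f^{\phi}_{\lambda_j}$: on $\Omega$, $\|\hat f^{\phi}_{\lambda_j}-f^*\|_{L_2}\lesssim r_j$ and $\phi(\hat f^{\phi}_{\lambda_j}-f^*)\leq(4+2/A^*)\eta\,\phi_j$. The probability of $\Omega$ is the one in the statement, the worsened constant $64A^*LB(8+12A^*)$ reflecting the enlarged radius.

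Next, I would show that on $\Omega$ the oracle lies in all the high boxes: $f^*\in\hat R_j$ for every $j\geq j^*$. Since $\hat f^{\phi}_{\lambda_j}$ minimizes $P_N\ell+\lambda_j\phi$ one has $P_N\cL^{\lambda_j}_{\hat f^{\phi}_{\lambda_j}}\leq 0$ and hence $T_{\lambda_j}(f^*)=|P_N\cL^{\lambda_j}_{\hat f^{\phi}_{\lambda_j}}|$; the bound $|P_N\cL^{\lambda_j}_{\hat f^{\phi}_{\lambda_j}}|\leq((A^*)^{-1}+2)\lambda_j\phi_j$ is exactly the intermediate estimate produced inside the proof of Theorem~\ref{thm_erm_conv2} (Lipschitz control of $P_N(\ell_{\hat f^{\phi}_{\lambda_j}}-\ell_{f^*})$ via the complexity, together with $\phi(\hat f^{\phi}_{\lambda_j})-\phi(f^*)\leq\phi(f^*)\leq\phi_j$ and the $\eta$-subadditive control of $\phi(\hat f^{\phi}_{\lambda_j})$). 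Consequently $\bigcap_{j\geq j^*}^{J}\hat R_j\ni f^*$ is nonempty, so $k^*\leq j^*$, and therefore $\tilde f\in\hat R_{j^*}$.

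From $\tilde f\in\hat R_{j^*}$, i.e. $T_{\lambda_{j^*}}(\tilde f)\leq((A^*)^{-1}+2)\lambda_{j^*}\phi_{j^*}$, and $P_N\cL^{\lambda_{j^*}}_{\hat f^{\phi}_{\lambda_{j^*}}}\leq 0$, I get
\[
P_N(\ell_{\tilde f}-\ell_{f^*})+\lambda_{j^*}\big(\phi(\tilde f)-\phi(f^*)\big)\ \leq\ ((A^*)^{-1}+2)\,\lambda_{j^*}\phi_{j^*}\ =\ ((A^*)^{-1}+2)\,r_{j^*}^2 ,
\]
so $\tilde f$ is an approximate minimizer of the regularized empirical criterion up to an additive slack of order $(r^*)^2$. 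I would then re-run, essentially verbatim, the localization/convexity argument of Theorem~\ref{thm_erm_conv2} for this approximate minimizer: if $\phi(\tilde f-f^*)$ exceeded $(4+2/A^*)\eta\,\phi(f^*)$ it would contradict $\lambda_{j^*}\geq\lambda_0$ and the displayed inequality; and if $\|\tilde f-f^*\|_{L_2}$ exceeded $(8+12A^*)r^*$, the point of the segment $[f^*,\tilde f]$ at $L_2$-distance $r^*$ from $f^*$ — which lies in the $\phi$-ball of radius $\eta(4+2/A^*)\phi(f^*)$ by convexity of $\phi$ and $\phi(0)=0$ — would, by the local Bernstein condition (Assumption~\ref{assum:fast_rates}) and the process bound on $\Omega$, make $P_N\cL^{\lambda_{j^*}}$ strictly larger there than the slack above, contradicting convexity of $f\mapsto P_N\cL^{\lambda_{j^*}}_f$ along $[f^*,\tilde f]$. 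This yields the $L_2$ and $\phi$ bounds; the excess-risk bound $P\cL_{\tilde f}\leq(4+3/A^*)(r^*)^2$ then follows by transferring $P_N\cL_{\tilde f}$ to $P\cL_{\tilde f}$ on $\Omega$ once $\tilde f-f^*$ is known to lie in the small $L_2\cap\phi$ ball.

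The main obstacle is the combination of the second and third steps: one must control the regularized empirical excess risk $|P_N\cL^{\lambda_j}_{\hat f^{\phi}_{\lambda_j}}|$ of every estimator uniformly in $j\geq j^*$ on a single event — which is where the monotonicity of the empirical process estimate in the $L_2$-radius is exploited to avoid a union bound over the $J$ scales, and where the fast-rate (local Bernstein) input at each level $\phi_j$ must be available — and then carry the slack $((A^*)^{-1}+2)r_{j^*}^2$ through the sphere/convexity argument, tracking constants so that the final radius is exactly $(8+12A^*)r^*$ and the excess risk exactly $(4+3/A^*)(r^*)^2$, including the comparison $r_{j^*}\asymp r^*$ whose constant depends on $A^*$ and $\eta$. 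Everything else (the existence and range of $j^*$, membership $\tilde f\in\hat R_{j^*}$, the transfer to $P\cL_{\tilde f}$) is routine given Theorem~\ref{thm_erm_conv2} and its proof.
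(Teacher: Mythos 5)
Your proposal is correct and follows essentially the paper's own route: a single empirical-process event at the enlarged radius $(8+12A^*)r^*$, a pivot index adjacent to $\phi(f^*)$, the bound $T_{\lambda_j}(f^*)=-P_N\cL^{\lambda_j}_{\hat f^{\phi}_{\lambda_j}}\leq ((A^*)^{-1}+2)\lambda_j\phi_j$ for all levels above the pivot (your ``intermediate estimate'' is exactly Lemma~\ref{lemma_cont}), hence $k^*\leq$ pivot and $\tilde f\in\hat R_{\text{pivot}}$, then the same convexity/projection exclusion argument applied to the approximate minimizer $\tilde f$, and finally the transfer from $P_N\cL_{\tilde f}$ to $P\cL_{\tilde f}$ on the same event. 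The only deviation is your choice of pivot with $\phi_{j^*}\geq\phi(f^*)$, whereas the paper takes $\tilde k$ with $\phi_{\tilde k}\leq\phi(f^*)\leq 2\phi_{\tilde k}$ so that $r_{\tilde k}\leq r^*$ and $\lambda_{\tilde k}\geq\lambda_0$ come essentially for free; with your choice $\lambda_{j^*}\geq\lambda_0$ holds only up to a factor $2$ (since $\phi_{j^*}$ may be as large as $2\phi(f^*)$) and you additionally need the fixed-point comparison $r_{j^*}\leq c_{A^*,\eta}\,r^*$, which is a constant-chasing adjustment rather than a structural difference.
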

Note that such a procedure required the knowledge of $A^*$ and $M$. 
Complete proofs of Theorem~\ref{theo_lepski} and Theorem~\ref{thm_erm_conv2} are presentend in Section~\ref{sec_proof} in the Appendix. Here we present a simple sketch of the proof of Theorem~\ref{thm_erm_conv2}. Our proof relies on a homogeneity argument allowing to study the empirical excess risk only in neighborhood around the oracle $f^*$. $\vspace{0.2cm}$	\newline
\textbf{Sketch of the proof : }  
The main arguments are presented up to some constants depending on $A^*,L$ and $\eta$. The proof is splitted into two parts. First, we identify a random event onto which the statistical behavior of $\hat{f}^{\phi}_{\lambda}$ can be studied using deterministic arguments. Next, we prove that this event holds with large probability. Here we will only focus on the deterministic argument (see Section~\ref{sec_proof} for the stochastic control).\\
Let  $\cB_{\lambda} =  \{f \in F: \; \|f-f^*\|_{L_2} \leq \lambda \phi(f^*)/r^* \mbox{ and }  \phi(f-f^*) \leq \phi(f^*) \}$ and the stochastic event is defined as
\begin{equation*}
	\Omega := \left\{ 
	\mbox{for all } f\in F\cap (f^*+ r^*B_{L_2}) \cap B_{\phi(f^*)}^{\phi}(f^*) , \quad \big|(P-P_N)\cL_f\big|\leq (r^*)^2  \right\}
\end{equation*} 
By definition, the estimator $\hat{f}^{\phi}_{\lambda}$ satisfies $P_N\cL_{\hat{f}^{\phi}_{\lambda}}^{\lambda} \leq 0$.
Therefore, to prove Theorem~\ref{thm_erm_conv2} it is sufficient to show that on $\Omega$, $	P_N\cL_{f}^{\lambda} > 0$ for all functions $f$ in $F \backslash  \cB_{\lambda}$. The proof follows from an homogeneity argument saying that for all functions $f \in  F \backslash \cB_{\lambda}$, there exist $f_0$ in the frontier of $ \mathcal B_{\lambda}$  and $\alpha \geq 1$ such that $P_N\cL^{\lambda}_{f} \geq \alpha P_N\cL^{\lambda}_{f_0} $. On the frontier of $\cB_{\lambda}$, either we have 1) $\phi(f_0-f^*)=  \phi(f^*)$ and $\norm{f_0-f^*}_{L_2}\leq \lambda \phi(f^*)/r^* $ or  2) $\norm{f_0-f^*}_{L_2}= \lambda \phi(f^*)/r^* $ and $\phi(f_0-f^*) \leq \phi(f^*)$. \\
The homogeneity argument linking the empirical excess risk of $f$ to the one of $f_0$ is the following. For all $i \in \{1,\cdots,N \}$, let $\psi_i: \mathbb R \rightarrow \mathbb R $ be defined for all $u\in \R$ by 
\begin{equation}\label{eq:fct_psi}
\psi_i(u) = \bar{\ell} (u + f^{*}(X_i), Y_i) - \bar{\ell} (f^{*}(X_i), Y_i).
\end{equation}
The functions $\psi_i$ are such that $\psi_i(0) = 0$, they are convex because $\bar{\ell}$ is, in particular $\alpha\psi_i(u) \leq \psi_i(\alpha u)$ for all $u\in\mathbb R$ and $\alpha \geq 1$ and $\psi_i(f(X_i) - f^{*}(X_i) )=  \bar{\ell} (f(X_i), Y_i) - \bar{\ell} (f^{*}(X_i), Y_i) $ so that the following holds:
\begin{align} \label{conv_arg}
	\nonumber P_N \cL_f & = \frac{1}{N} \sum_{i=1}^{N} \psi_i \big( f(X_i)- f^{*}(X_i) \big) = \frac{1}{N} \sum_{i=1}^{N}   \psi_i(\alpha( f_0(X_i)- f^{*}(X_i) ))\\
	&\geq \frac{\alpha}{N} \sum_{i=1}^{N}   \psi_i(( f_0(X_i)- f^{*}(X_i))) = \alpha P_N \cL_{f_0}.
\end{align}
For the regularization part, since $\alpha \geq 1$, the same homogeneity arguments holds. 	
\begin{align*}
	\phi(f) - \phi(f^{*})  = \phi \big(f^* + \alpha(f_0 - f^*) \big) - \phi(f^{*}) \geq \alpha \big( \phi(f_0) - \phi(f^*)  \big)
\end{align*}
It remains to control $P_N \cL_{f_0}^{\lambda}$ in the two cases 1) and 2). Up to technicalities, in case 1), we use Assumption \ref{assum:phi} to showing that $\phi(f_0)  - \phi(f^*) \geq \phi(f^*)$ (up to constants). Using the event $\Omega$, we show that $P_N \cL_{f_0} \geq  - \theta \lambda \phi(f^*) $ for $\theta > 0$ small enough. In case 2), we use that $\phi(f_0)  - \phi(f^*)  \geq - \phi(f^*)$ and the local Bernstein Assumption~\ref{assum:fast_rates} to prove that $P_N \cL_{f_0} \geq   \gamma \lambda \phi(f^*)$ for $\gamma > 0$ large enough which concludes the deterministic argument. $ \quad \blacksquare$

\section{Robustness to outliers and heavy-tailed data via Minmax MOM estimators} \label{sec_mom}

In Section~\ref{sec_erm}, we assumed that the class $\{(f - f^*)(X), f \in F \}$. is sub-Gaussian and that the data $(X_i,Y_i)_{i=1}^N$ are i.i.d with the same distribution $P$. In this section, we relax these assumptions using \textbf{minmax-MOM type estimators}. For any $i \in \{1,\cdots,N \}$, let $P_i$ be the distribution of $(X_i,Y_i)$. 
Let $\mathcal{I} \cup \mathcal{O}$ denote an unknown partition of $\{1,\cdots N\}$. The cardinality of $\mathcal{O}$ is denoted $|\mathcal{O}|$. Data $(X_i,Y_i)_{i \in \mathcal{O}}$ are considered as outliers. \textbf{No assumption} on the distribution $P_i$ for $i \in \mathcal{O}$ is made and can be dependent or even adversarial. The informative random variables $(X_i,Y_i)_{i \in \mathcal{I}}$ satisfy:
\begin{Assumption}\label{assum:moments}
	The data $(X_i,Y_i)_{i \in \cI}$ are independent and for all $ i  \in \mathcal{I}: P_i(f-f^{*})^2(X_i) = P(f-f^{*})^2(X) $ and $ P_i\mathcal{L}_f  = P\mathcal{L}_f $ where we recall that $P$ is the distribution of $(X,Y)$ \enspace.
\end{Assumption} 
Assumption~\ref{assum:moments} holds in the i.i.d framework but it covers other situations where informative data $(X_i,Y_i)_{i \in \cI}$ may not have the same distribution. It is only required to induce the same $L_2$-structure on the class $F$ and the same excess risk.\\

Let $(B_s)_{s=1,\ldots,S}$ denote a partition of $\{1,\ldots,N\}$ into blocks $B_s$ of equal size $N/S$ (if $N$ is not a multiple of $S$, just remove some data). Following~\cite{lecue2017robust} the minmax MOM-estimators are defined as 
\begin{equation}\label{def:MOM}
\hat f_S^{\lambda} = \argmin_{f \in F} \sup_{g \in F} MOM_S(\ell_f-\ell_g) + \lambda\big(  \phi(f)-\phi(g) \big),
\end{equation}
where $MOM_S(\ell_f - \ell_g) = \text{Med} \big(P_{B_1} (\ell_f - \ell_g),\cdots,P_{B_S} (\ell_f-\ell_g)\big)$ with $P_{B_s}(\ell_f-\ell_g) =(1/|B_s|) \sum_{i \in B_s} \ell_f(X_i,Y_i)-\ell_g(X_i,Y_i)$.\\
Since we no longer consider the sub-Gaussian framework, we have to adapt the complexity parameter to this new setup. The complexity is measured via a function $\tilde{r}(\cdot)$ defined as 
\begin{align} \label{comp:rad}
	\nonumber
	\tilde{r}(A) = \inf \bigg\{ & r >0 : \forall J \subset \mathcal{I} : |J| \geqslant N/2, \\ 
	&  \; \mathbb{E} {\sup_{f \in F \cap (f^*+rB_{L_2}) \cap B_{\eta(4+2A^{-1}) \phi(f^*)}^{\phi}(f^*)} \bigg |{\sum_{i \in J} \sigma_i (f-f^*)(X_i)} \bigg |} \leq (384AL)^{-1} r^2 |J| \bigg \} 
\end{align}
where $(\sigma_i)_{i=1}^N$ are i.i.d Rademacher random variables independent from $(X_i,Y_i)_{i \in \cI}$.\\
This complexity function is very close to the one in the sub-Gaussian case from Section~\ref{sec_erm} expect that the Rademacher-complexity replaces the Gaussian mean-width. When the class $F-f^*$ is $B$-sub-Gaussian, a standard chaining argument~\cite{talagrand2006generic} shows that $\tilde r (\cdot)$ and $r(\cdot)$ are equivalent. However, when only $L_p$ conditions are granted on the class $F-f^*$, $\tilde r(\cdot)$ may be larger than $r(\cdot)$, see~\cite{chinot}, for instance. It is also necessary to adapt the local Bernstein condition from Assumption~\ref{assum:fast_rates} to the MOM-framework
\begin{Assumption}\label{assum:fast_rates_MOM} 
	There exists a constant $\tilde{A} > 0$ such that, for all $f$ in $F$ satisfying $\|f-f^{*}\|_{L_2} =\sqrt{C_{S,r}(\tilde{A})}$ and $\phi(f-f^*) \leq \eta(4+2/\tilde A) \phi(f^*)$, then $\|f-f^{*}\|_{L_2}^2 \leqslant \tilde{A} P\mathcal{L}_f$ where 
	\begin{equation}
	\label{def_CK}	C_{S,r}(A) =  \max\left( \tilde{r}^2(A), 368A^2L^2\frac{S}{N}\right) \enspace.\\
	\end{equation}
\end{Assumption} 
As Assumption~\ref{assum:fast_rates}, Assumption~\ref{assum:fast_rates_MOM} is only granted on a subset of the $L_2$-sphere centered in the oracle $f^*$ where the radius is proportional to the rate of convergence of the estimators. We are now in position to state our main results for the minmax MOM estimators.
\begin{Theorem}\label{main:theo_mom}
	Grant Assumptions~\ref{assum:lip_conv},~\ref{assum:convex},~\ref{assum:phi},~\ref{assum:moments} and~\ref{assum:fast_rates_MOM}. Let $S \geq  7|\mathcal{O}|/3 $,
	Then, with probability larger than $1- 2\exp(-S/504)$, for any regularization parameter $\lambda > C_{S,r}(\tilde A) / \phi(f^*)$, the estimator $\hat f_S^{\lambda}$ defined in Equation~\eqref{def:MOM} satisfies
	\begin{align*}
		& \phi(\hat f_S^{\lambda}-f^*)  \leq \eta(4+2/\tilde A) \phi(f^*) , \quad  \|\hat f_S^{\lambda}-f^{*}\|_{L_2} \leq (4+6\tilde{A}) \lambda\frac{ \phi(f^*)}{\sqrt{C_{S,r}(\tilde A)}} 
	\end{align*}
\end{Theorem}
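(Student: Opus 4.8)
The plan is to adapt the deterministic/stochastic split used in the sketch of Theorem~\ref{thm_erm_conv2} to the minmax–MOM setting. Write $r^{*2}=C_{S,r}(\tilde A)$, $\rho^*=\eta(4+2/\tilde A)\phi(f^*)$, and set
\[
\mathcal B_\lambda=\Bigl\{f\in F:\ \phi(f-f^*)\le\rho^*\ \text{and}\ \|f-f^*\|_{L_2}\le (4+6\tilde A)\lambda\phi(f^*)/r^*\Bigr\},
\]
a convex set containing $f^*$; the two conclusions of the theorem are exactly "$\hat f_S^{\lambda}\in\mathcal B_\lambda$". Put $\Psi(f,g)=MOM_S(\ell_f-\ell_g)+\lambda(\phi(f)-\phi(g))$, so that $\hat f_S^{\lambda}$ minimises $T(f):=\sup_{g\in F}\Psi(f,g)$ and, since $T(f^*)\ge\Psi(f^*,f^*)=0$, one has $T(\hat f_S^{\lambda})\le T(f^*)$. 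Thus it suffices to prove, on a favourable event $\Omega$, that $\Psi(f,f^*)\ge(2+\tfrac3{2\tilde A})\lambda\phi(f^*)$ for every $f\notin\mathcal B_\lambda$ while $T(f^*)\le(1+\tfrac1{2\tilde A})\lambda\phi(f^*)$: these bounds are separated (their gap is $(1+\tfrac1{\tilde A})\lambda\phi(f^*)>0$), and with $T(\hat f_S^{\lambda})\le T(f^*)$ this forces $\hat f_S^{\lambda}\in\mathcal B_\lambda$. The homogeneity identity replacing~\eqref{conv_arg} is: if $f=f^*+\alpha(f_0-f^*)$ with $\alpha\ge1$, convexity of the $\psi_i$ of~\eqref{eq:fct_psi} gives $P_{B_s}(\ell_f-\ell_{f^*})\ge\alpha P_{B_s}(\ell_{f_0}-\ell_{f^*})$ blockwise, hence $MOM_S(\ell_f-\ell_{f^*})\ge\alpha\,MOM_S(\ell_{f_0}-\ell_{f^*})$ by monotonicity and positive homogeneity of the median, while $\phi(f)-\phi(f^*)\ge\alpha(\phi(f_0)-\phi(f^*))$ by convexity and evenness of $\phi$; so $\Psi(f,f^*)\ge\alpha\,\Psi(f_0,f^*)$, and the analogous one-sided bound read with $\ell_{f^*}-\ell_{\cdot}$ is what controls $T(f^*)$.

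For the stochastic part I would define $\Omega$ to be the event on which, for every $f\in F$ with $\|f-f^*\|_{L_2}\le r^*$ and $\phi(f-f^*)\le\rho^*$, strictly more than $S/2$ blocks $B_s$ satisfy $|P_{B_s}(\ell_f-\ell_{f^*})-P\mathcal{L}_f|\le r^{*2}/(2\tilde A)$. As in~\cite{lecue2017robust,chinot}, $\Omega$ is built by: (i) bounding, for a fixed such $f$, the probability that a given informative block deviates, via Markov's inequality applied to $P_{B_s}(\ell_f-\ell_{f^*})-P\mathcal{L}_f$ together with $|\ell_f-\ell_{f^*}|\le L|f-f^*|$; (ii) a binomial deviation inequality to pass to "few blocks deviate"; (iii) the hypothesis $S\ge 7|\mathcal O|/3$, which lets one discard the at most $|\mathcal O|$ corrupted blocks while keeping a strict majority of well-behaved informative ones; (iv) a uniformisation over $f$ by symmetrisation and the contraction principle, which replaces the process by the Rademacher complexity in~\eqref{comp:rad} and uses that $\tilde r(\cdot)$—hence $\sqrt{C_{S,r}(\tilde A)}=r^*$, via~\eqref{def_CK}—is precisely the fixed point making this complexity $\le(384\tilde AL)^{-1}r^{*2}|J|$. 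The numerical constants $7/3,368,384,504$ are the bookkeeping of these steps and give $\mathbb P(\Omega)\ge1-2\exp(-S/504)$. On $\Omega$, "a strict majority of blocks in an interval $\Rightarrow$ the median in that interval" yields $MOM_S(\ell_f-\ell_{f^*})\in[P\mathcal{L}_f-r^{*2}/(2\tilde A),\,P\mathcal{L}_f+r^{*2}/(2\tilde A)]$ for all such $f$; with $P\mathcal{L}_f\ge0$ this is $\ge-r^{*2}/(2\tilde A)$ inside the $r^*$-ball, and with Assumption~\ref{assum:fast_rates_MOM} on the sphere $\|f-f^*\|_{L_2}=r^*$ it is $\ge r^{*2}/(2\tilde A)$; applying the homogeneity identity to scale any $f$ with $\phi(f-f^*)\le\rho^*$, $\|f-f^*\|_{L_2}>r^*$ down to that sphere gives $MOM_S(\ell_f-\ell_{f^*})\ge\|f-f^*\|_{L_2}\,r^*/(2\tilde A)\ge0$.

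The deterministic conclusion on $\Omega$ then runs as in the sketch of Theorem~\ref{thm_erm_conv2}. For $f$ with $\phi(f-f^*)>\rho^*$, Assumption~\ref{assum:phi} gives $\phi(f)-\phi(f^*)\ge\phi(f-f^*)/\eta-2\phi(f^*)$ (this is where the "$4$" inside $\rho^*$ is used), and scaling $f$ down to the sphere $\phi(\cdot-f^*)=\rho^*$ with factor $\gamma>1$, together with the $\Omega$-bounds and $r^{*2}<\lambda\phi(f^*)$, gives $MOM_S(\ell_f-\ell_{f^*})\ge-\gamma r^{*2}/(2\tilde A)$ and hence $\Psi(f,f^*)>\lambda\phi(f^*)\bigl(\gamma(4+\tfrac3{2\tilde A})-2\bigr)\ge(2+\tfrac3{2\tilde A})\lambda\phi(f^*)$; for $f$ with $\phi(f-f^*)\le\rho^*$ but $\|f-f^*\|_{L_2}>(4+6\tilde A)\lambda\phi(f^*)/r^*$, segmenting to the $L_2$-face with factor $\alpha\ge1$ and using the $MOM_S$ lower bound there together with $\phi(f_0)-\phi(f^*)\ge-\phi(f^*)$ gives $\Psi(f,f^*)\ge(2+2/\tilde A)\lambda\phi(f^*)\ge(2+\tfrac3{2\tilde A})\lambda\phi(f^*)$; so $\Psi(f,f^*)\ge(2+\tfrac3{2\tilde A})\lambda\phi(f^*)$ off $\mathcal B_\lambda$. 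Dually, for $T(f^*)=\sup_g\Psi(f^*,g)$ one reduces each competitor $g$ to the same two spheres: for $g$ in the $r^*$-ball, $MOM_S(\ell_{f^*}-\ell_g)\le r^{*2}/(2\tilde A)$ and $\phi(f^*)-\phi(g)\le\phi(f^*)$ give $\Psi(f^*,g)\le(1+\tfrac1{2\tilde A})\lambda\phi(f^*)$; for $g$ with $\phi(g-f^*)>\rho^*$ or $\|g-f^*\|_{L_2}>r^*$, the negative $MOM_S$-contribution makes $\Psi(f^*,g)$ strictly smaller. Hence $T(f^*)\le(1+\tfrac1{2\tilde A})\lambda\phi(f^*)<(2+\tfrac3{2\tilde A})\lambda\phi(f^*)\le\Psi(f,f^*)\le T(f)$ for every $f\notin\mathcal B_\lambda$, so $\hat f_S^{\lambda}\in\mathcal B_\lambda$, which is the claim.

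I expect the main obstacle to be item (iv) above: the uniform control of the block-means process over the localised class, and checking that $C_{S,r}(\tilde A)$—through the fixed point defining $\tilde r(\cdot)$ in~\eqref{comp:rad} and the extra $S/N$ term in~\eqref{def_CK}—is the correct complexity level, together with the careful tracking of the numerical constants ($7/3,368,384,504$) that keeps a strict majority of blocks after removal of the corrupted ones and that keeps the bounds on $T(f^*)$ and on $\inf_{f\notin\mathcal B_\lambda}\Psi(f,f^*)$ separated. By contrast the deterministic homogeneity argument is, as the text already advertises for Theorem~\ref{thm_erm_conv2}, essentially mechanical once $MOM_S$ is substituted for $P_N$ and $\sqrt{C_{S,r}(\tilde A)}$ for $r^*$.
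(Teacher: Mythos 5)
Your proposal follows essentially the same route as the paper's proof: a deterministic reduction showing that, on a favourable event, the minmax criterion of any $f$ outside the localized set strictly exceeds that of $f^*$ (via the blockwise convexity/homogeneity "projection" onto the $\phi$- and $L_2$-boundaries together with the local Bernstein condition on the sphere of radius $\sqrt{C_{S,r}(\tilde A)}$), combined with a stochastic control of the block-means over the localized class via per-block Markov bounds, a bounded-difference/concentration step, symmetrization and contraction, the fixed-point definition of $\tilde r(\cdot)$, and $S\geq 7|\mathcal O|/3$ to discard corrupted blocks. Apart from minor bookkeeping differences in the constants (and the paper using a McDiarmid argument on a Lipschitz surrogate of the indicator where you invoke a binomial-type deviation before uniformizing), this is the paper's argument, and it is correct.
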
	
It is also possible to use the Lepski's method to get an adaptive estimator as the one in Theorem~\ref{theo_lepski}. For the sake of brevity, we do not present this result here. There is a tradeoff between confidence and accuracy and an optimal choice of $S$  would be $S \asymp \tilde{r}(\tilde{A}) N $. In that case, $C_{S,r}(\tilde A) \asymp \tilde{r}(\tilde{A})$. For this value of $S$, the optimal $\lambda$ is $\tilde{r}^2(\tilde{A})/ \phi(f^*)$ and we would obtain $ \|\hat f_S^{\lambda}-f^{*}\|_{L_2}^2 \lesssim C(\tilde A) \tilde r(\tilde A) $.  With $S \asymp \tilde{r}(\tilde{A}) N $  and $\lambda=
\asymp \tilde{r}^2(\tilde{A})/ \phi(f^*)$, we recover the same result as the one in the sub-Gaussian setting as long as Rademacher complexity and Gaussian-mean width are equivalent. We will see in Section~\ref{sec_svm} that it is the case for the precise example of RKHS associated to bounded kernel. Moreover, by construction, the estimator $\hat f_S^{\lambda}$ is robust to $3S/7$ outliers in the dataset.\\
Therefore, using minmax-MOM estimators, we have relaxed two strong Assumptions 1) the i.i.d setting and 2) the sub-Gaussian Assumption on the class $F-f^*$. Properly calibrated minmax-MOM estimators are not affected if the number of outliers is less than \emph{number of observations} $\times$ \emph{square of the optimal rate in the i.i.d setup} (when $S \asymp \tilde{r}(\tilde{A}) N $ and $r(A) \asymp \tilde r (A)$).


\section{Applications} \label{sec_applications}

Our results are very general and may be applied to various examples. To do so, it is necessary to:
\begin{itemize}
	\setlength{\itemsep}{1pt}
	\setlength{\parskip}{0pt}
	\setlength{\parsep}{0pt}
	\item Verify Assumptions~\ref{assum:lip_conv},~\ref{assum:convex} and \ref{assum:phi}.
	\item If the RERM is studied, check Assumption~\ref{ass:sub-gauss} and compute the Gaussian-mean-width $w\big(F \cap B_{\eta(4+2(A^*)^{-1}) \phi_j}^{\phi}(f^*)  \cap(f^* r B_{L_2}) \big)$ to deduce $r(A)$ for every $A>0$.
	\item If the minmax MOM-estimators is considerer, compute the Rademacher complexity to deduce $\tilde r(A)$ for every $A>0$.
	\item Find $A$ satisfying the local Bersntein condition (the $L_2$-radius depends on the estimator we consider).
\end{itemize}
As an illustration, we study in the sequel RERM and minmax MOM-estimators for linear estimators in $\bR^p$ regularized by the elastic net and for regularized kernel methods. It turns out that the sub-Gaussian assumption over the class $F-f^*$ is not required by using the reproducing property of RKHS. Instead we develop another general analysis to study RERM in RKHS associated with bounded kernel (see Section~\ref{sec_special_case_rkhs}).

\subsection{Application to Elastic net with Huber loss function} \label{sec_elastic_net}

In~\cite{zou2005regularization}, the authors noticed that the performance of the LASSO is not as good as the one of Rigde regression when the variables are highly correlated. Theoretically, it is now known that the covariance matrix of the design $X$ must satisfy the Restricted Eigenvalue condition to obtain fast rates of convergence for the LASSO \cite{bellec2018slope,bickel2009simultaneous}. To bypass this limitation, the authors introduced in~\cite{zou2005regularization} the Elastic net regularization.

\paragraph{Regularized Empirical Risk Minimizers}
Let $F$ be the class of linear functionals in $\bR^p$, $F = \{ \inr{\cdot,t}, \; t \in \bR^p  \}$ which satisfies Assumption~\ref{assum:convex}. Let $(X_i,Y_i)_{i=1}^N$ be random variables valued in $\bR^p \times \cY$. As the \emph{oracle} is denoted $f^*$, we introduce $t^*$ such that $f^*(\cdot)= \inr{t^*,\cdot}$. Let $\alpha \in [0,1]$, for any $t$ in $\bR^p$, the elastic net penalization is defined as
\begin{equation} \label{elastic_net}
\phi(t) = (1-\alpha) \|t\|_1 + \alpha \|t\|_{2}^2 \enspace,
\end{equation} 
where $\|t\|_1 = \sum_{i=1}^p |t_i|$ and $\|t\|^2_2 = \sum_{i=1}^p t_i^2$. For $\alpha = 1$ and $\alpha = 0$ we recover respectively the ridge and the Lasso penalizations (these cases will not be studied in the sequel). Clearly $\phi$ defined in Equation~\eqref{elastic_net} satisfies Assumption~\ref{assum:phi} with $\eta = 2$.
Let $\bar{\ell}^{\delta}$ be the  huber loss function with parameter $\delta > 0$ (which is $\delta$-Lipschiz), the estimator RERM is defined as
\begin{equation} \label{def_en}
\hat t_{\lambda}^{\delta,\alpha} \in \argmin_{t \in \bR^p} \frac1{N} \sum_{i=1}^N  \bar{\ell}^{\delta} \big( \inr{X_i,t},Y_i   \big) + \lambda \big( (1-\alpha) \|t\|_1 + \alpha \|t\|_{2}^2 \big) \enspace.
\end{equation}

Theorems~\ref{thm_erm_conv2} and~\ref{theo_lepski} require the computation of the Gaussian mean-width $w \big(F \cap B_{\rho}^{\phi}(t^*)  \cap (f^*+r B_{L_2})  \big)$ for $r,\rho >0$. To do so, let us assume that the design $X$ is isotropic i.e for all $t \in \bR^p$, $\bE \inr{X,t}^2_{\bR^p} = \|t\|_{2}^2$. It means that the $L_2(\mu)$ norm
coincides with the natural Euclidean structure on the space $\ell_2^p$. Thus, for all $\rho , r >0 $, under the isotropic assumption, we have
\begin{equation}
w \big(F \cap B_{\rho}^{\phi}(t^*)  \cap (f^*+r B_{L_2})  \big) = w(B_{\rho}^{\phi}(0) \cap r B_2^p) = \bE \sup_{ t \in \bR: \; (1-\alpha) \|t\|_1 + \alpha \|t\|^2_2 \leq \rho, \; \|t\|_2 \leq r} \inr{\textbf{G},t}_{\bR^p} \enspace,
\end{equation}
where $\textbf{G}$ is a standard Gaussian random vector in $\bR^p$ and $B_l^p$ denotes the unit ball in $(\bR^p,\|\cdot\|_l)$, for $l \geq 0$. Let $\alpha \in (0,1)$. We have,
\begin{equation} \label{w_elastic}
w(B_{\rho}^{\phi}(0) \cap r B_{L_2}) \leq \min\bigg(  w\big( \frac{\rho}{1-\alpha} B_1^p \cap r B_2^p\big), w\big( \min(r,\sqrt{\frac{\rho}{\alpha}}) B_2^p \big) \bigg ) \enspace.
\end{equation}  
Let us introduce
\begin{align*}
	& r_1^* = \inf \big\{ r> 0: \;   64\delta BA^*w\bigg( \frac{(8+4/A^*)\phi(f^*)}{1-\alpha} B_1^p \cap r B_2^p\bigg)  \leq  \sqrt{N}r^2   \big\} \enspace. \\
	& r_2^* = \inf \big\{ r> 0: \;   64\delta BA^*w\bigg( \min \big(r,\sqrt{\frac{(8+4/A^*)\phi(f^*)}{\alpha}}) B_2^p\big) \bigg)  \leq  \sqrt{N}r^2   \big\} \enspace.
\end{align*}
From Equation~\eqref{w_elastic} and the definition of $r^*$ it is clear that $r^* \leq \min(r_1^*,r_2^*)$. Using  the computations of $w\big( \rho B_1^p \cap r B_2^p\big)$ for all $r,\rho >0 $ presented in~\cite{lecue2017regularization}, it follows that
\begin{align*}
	(r_1^*)^2 =  \left\{
	\begin{array}{ll}
		\frac{(8+4/A^*)\phi(f^*)}{1-\alpha} \sqrt{\frac{64\delta BA^*}{ \pmb{N}}\log \bigg(  \frac{e \pmb p(1-\alpha)}{\sqrt{\pmb{N}}(8+4/A^*)\phi(f^*)}\bigg)} & \mbox{if } \frac{(8+4/A^*)^2\phi^2(f^*)\pmb N}{(1-\alpha)^264\delta BA^*} \leq \pmb p^2 \\
		\frac{64\delta BA^* \pmb p}{\pmb N}	& \mbox{if }\frac{ (8+4/A^*)^2\phi^2(f^*)\pmb N}{(1-\alpha)^264\delta BA^*} \geq \pmb p^2 \end{array}
	\right.
\end{align*}

\begin{align*}
	(r_2^*)^2 =  \left\{
	\begin{array}{ll}
		\frac{64\delta BA^* \pmb p}{\pmb N}  & \mbox{if } \pmb N \geq \frac{64\delta BA^*\alpha \pmb p}{(8+4/A^*)\phi(f^*)}  \\
		\sqrt{\frac{64\delta B (8+4/A^*) \phi(f^*) \pmb p }{\alpha \pmb N}}& \mbox{if } \pmb N \leq \frac{64\delta BA^*\alpha \pmb p}{(8+4/A^*)\phi(f^*)} \end{array}
	\right.
\end{align*}
For the sake of presentation, the dependence with respect to the dimension and the sample size is presented in bold. Since $r^* \leq \min(r_1^*,r_2^*)$, it is clear that $r^*$ captures the best situation between the LASSO (complexity parameter $r_1^*$) and the Ridge regression (complexity parameter $r_2^*$). \\

To apply Theorems~\ref{thm_erm_conv2} and~\ref{theo_lepski}, it remains to verify the local Bernstein condition. Results on the local Bernstein Assumption (see Assumptions~\ref{assum:fast_rates} and~\ref{assum:fast_rates_MOM}) can be found in~\cite{chinot} for the quantile and Huber losses for regression problems and for the logistic and the Hinge loss for classification. For the sake of brevity, we only present the results for the Huber loss function with parameter $\delta >0$  (absolute loss function will be studied in Section~\ref{sec_svm}). Note that $\delta$ must be of the order of a constant. Let us introduce the following assumption.
\begin{Assumption} \label{ass:ber_huber} Let $r,\rho,\varepsilon >0$.
	\begin{itemize}
		\setlength{\itemsep}{1pt}
		\setlength{\parskip}{0pt}
		\setlength{\parsep}{0pt}
		\item a) There exists $C'>0$ such that, for all $ f\in F$ such that $\|f-f^*\|_{L_2} = r$ and $\phi(f-f^*) \leq \rho$, $\|f-f^{*}\|_{L_{2+\varepsilon}} \leq C'\|f-f^{*}\|_{L_2}$. 
		\item b) Let $C'$ be the constant defined above. There exists $\gamma>0$ such that, for all $x\in\cX$ and for all $z$ in $\mathbb{R}$ such that $ |z-f^{*}(x) | \leq (\sqrt{2}C')^{(2+\varepsilon)/\varepsilon} r $, we have $ F_{Y|X=x}(z+\delta)-F_{Y|X=x}(z- \delta)  \geq \gamma$, where $F_{Y|X=x}$ is the conditional cumulative function of $Y$ given $X=x$.
	\end{itemize}
\end{Assumption}
When the class $F-f^*$ is $1$-sub-Gaussian, it is clear that the point a) of Assumption~\ref{ass:ber_huber} holds with an absolute constant $C'$ for $\varepsilon = 2$ (see theorem 1.1.5 in \cite{chafai2012interactions}). For the point b), if $Y = \inr{t,X} + W$, where $W$ is a symmetric random variable independent from $X$ and $t \in \bR^p$, we have $t^* = t$. In this case, the point b) holds if $F_W(\delta - 2(C')^2r ) - F_W(2(C')^2r - \delta) \geq \gamma$, where $F_W$ denotes the cdf of $W$. It simply means that the noise puts enough mass around 0. In particular, point b) holds when $W$ is Cauchy. In this case, $Y$ is not integrable and yet we are able to verify the Bernstein condition and derive fast rates of convergence. 
\begin{Theorem}[\cite{chinot}] \label{huber_loss}
	Grant Assumptions \ref{ass:ber_huber} (with parameter $r$, $\rho$ and $\gamma$). Then, for all $f\in F$ satisfying $\|f-f^*\|_{L_2} = r$ and $\phi(f-f^*) \leq \rho$, $\|f-f^*\|_{L_2}^2\leqslant (4/\gamma) P\mathcal{L}_f$.	
\end{Theorem}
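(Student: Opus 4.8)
The plan is to prove the curvature inequality $\|f-f^*\|_{L_2}^2\le (4/\gamma)P\cL_f$ by a second–order Taylor expansion of the risk along the segment joining $f^*$ to $f$, exploiting that the Huber loss is piecewise quadratic, and to tame the (possibly heavy) tail of $f-f^*$ by a truncation argument calibrated to the threshold appearing in Assumption~\ref{ass:ber_huber}. Fix $f\in F$ with $\|f-f^*\|_{L_2}=r$ and $\phi(f-f^*)\le\rho$, and write $h:=f-f^*$, so that $\E h(X)^2=r^2$. Since $F$ is convex (Assumption~\ref{assum:convex}) and $f^*\in F$, the segment $f^*+th$, $t\in[0,1]$, stays in $F$, so the map $\varphi(t):=R(f^*+th)=\E\bigl[\bar{\ell}^{\delta}(f^*(X)+th(X),Y)\bigr]$ is convex on $[0,1]$ and, as $f^*$ minimizes the risk over $F$, attains its minimum at $t=0$; hence $\varphi'(0^+)\ge 0$. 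The Huber loss $\bar{\ell}^{\delta}(\cdot,y)$ is $C^1$ with $1$-Lipschitz derivative whose own derivative equals $\mathbf{1}\{|u-y|\le\delta\}$ for a.e.\ $u$, which (after justifying differentiation under the expectation via the bound $r^2$ on the second difference quotient) shows that $\varphi$ is $C^1$, $\varphi'$ is absolutely continuous, and $\varphi''(t)=\E\bigl[\mathbf{1}\{|f^*(X)+th(X)-Y|\le\delta\}\,h(X)^2\bigr]$ for a.e.\ $t$. Taylor's formula with integral remainder then yields
\[
P\cL_f=\varphi(1)-\varphi(0)=\varphi'(0^+)+\int_0^1(1-t)\,\varphi''(t)\,dt\ \ge\ \int_0^1(1-t)\,\varphi''(t)\,dt.
\]

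Next I would lower bound $\varphi''(t)$ uniformly in $t\in[0,1]$ on a favorable event. Set $a:=(\sqrt2\,C')^{(2+\varepsilon)/\varepsilon}\,r$ and $E:=\{|h(X)|\le a\}$. For any $t\in[0,1]$ and any $x\in E$, the point $z=f^*(x)+th(x)$ satisfies $|z-f^*(x)|\le a$, so part (b) of Assumption~\ref{ass:ber_huber} gives, on $E$,
\[
\E\bigl[\mathbf{1}\{|f^*(X)+th(X)-Y|\le\delta\}\ \big|\ X\bigr]=F_{Y|X}(z+\delta)-F_{Y|X}(z-\delta)\ \ge\ \gamma.
\]
Conditioning on $X$ inside $\varphi''(t)$ and discarding the contribution of $E^c$ gives $\varphi''(t)\ge\gamma\,\E[h(X)^2\mathbf{1}_E]$ for every $t\in[0,1]$, hence $P\cL_f\ge(\gamma/2)\,\E[h(X)^2\mathbf{1}_E]$.

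It then remains to show $\E[h(X)^2\mathbf{1}_E]\ge r^2/2$, i.e.\ $\E[h(X)^2\mathbf{1}_{E^c}]\le r^2/2$. Here I would apply Hölder's inequality with exponents $(2+\varepsilon)/2$ and $(2+\varepsilon)/\varepsilon$ to get $\E[h(X)^2\mathbf{1}_{E^c}]\le\|h\|_{L_{2+\varepsilon}}^2\,\bP(E^c)^{\varepsilon/(2+\varepsilon)}$, together with Markov's inequality at order $2+\varepsilon$, which gives $\bP(E^c)\le\|h\|_{L_{2+\varepsilon}}^{2+\varepsilon}/a^{2+\varepsilon}$. Inserting $\|h\|_{L_{2+\varepsilon}}\le C'\|h\|_{L_2}=C'r$ from part (a) of Assumption~\ref{ass:ber_huber} and the chosen value of $a$, the powers of $C'$ and of $r$ cancel exactly and one is left with $\E[h(X)^2\mathbf{1}_{E^c}]\le 2^{-(2+\varepsilon)/2}r^2\le r^2/2$. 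Combining the inequalities above, $P\cL_f\ge(\gamma/4)r^2=(\gamma/4)\|f-f^*\|_{L_2}^2$, which is the assertion.

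The only step carrying real content is this truncation: because $h(X)=(f-f^*)(X)$ need not be bounded, the quadratic curvature of the Huber loss helps only where $|h(X)|$ is not too large, and Assumption~\ref{ass:ber_huber} is tailored so that the $L_{2+\varepsilon}$–$L_2$ norm comparison in (a) together with the precise threshold $(\sqrt2\,C')^{(2+\varepsilon)/\varepsilon}r$ in (b) guarantee that this good event retains at least half of $\|h\|_{L_2}^2$; this is the quantitative heart of the argument. The rest — the second–order expansion of a convex, piecewise–quadratic loss, and the vanishing of the first–order term by the global optimality of $f^*$ over the convex class $F$ — is routine, and in particular no boundedness or sub-Gaussian hypothesis on $h(X)$ is used.
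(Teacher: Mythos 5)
Your proof is correct and follows essentially the same route as the argument of \cite{chinot} that the paper invokes for this theorem (second-order expansion of the conditional Huber risk, whose curvature is $F_{Y|X=x}(z+\delta)-F_{Y|X=x}(z-\delta)\ge\gamma$ on the truncation event, combined with the H\"older--Markov step showing that the event $\{|f-f^*|(X)\le(\sqrt2\,C')^{(2+\varepsilon)/\varepsilon}r\}$ retains at least half of $\|f-f^*\|_{L_2}^2$, the localization $\phi(f-f^*)\le\rho$ entering only through part (a) of Assumption~\ref{ass:ber_huber}). The only cosmetic adjustment is to define $\varphi(t):=P\cL_{f^*+th}$ rather than $\varphi(t):=R(f^*+th)$, since for heavy-tailed $Y$ the raw risk may be infinite while the excess risk is finite by the Lipschitz property; the Taylor expansion and the nonnegativity of $\varphi'(0^+)$ are unchanged.
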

Note that in~\cite{chinot}, the proof holds for any $f$ in $F$ such that $\|f-f^*\|_{L_2} = r$. The proof of Theorem~\ref{huber_loss} is exaclty the same as the one in \cite{chinot} with simple modifications taking into account the new localization with respect to the regularization. \\
We are now in position to state the main theorem for the elastic net procedure.
\begin{Theorem} \label{theo_elastic_net}
	Let $r^* = \min(r^*_1,r_2^*)$. Let $(X_i,Y_i)_{i=1}^N$ be i.i.d random variables distributed as $(X,Y)$ where $Y = \inr{X,t^*}+ W$, where $t^* \in \bR^p$, $X = (x_1,\cdots,x_p)$ is a sub-Gaussian random vector. Let us assume that the noise $W$ is a symmetric random variable independent from $X$ such that there exists $\gamma >0$ for which $F_W(\delta - 2(C')^2r^*) - F_W(2(C')^2r^* - \delta) \geq \gamma$ . Let $\lambda = (r^*)^2 / \phi(f^*)$. With probability larger than $1-2\exp\big(-  \frac{\gamma^2}{4(128)^2\delta^2}N (r^*)^2 \big)$, the estimator $\hat t_{\lambda}^{\delta,\alpha}$ associated with the Huber loss function defined in Equation~\eqref{def_en} satisfies
	\begin{align*}
		\| \hat t_{\lambda}^{\delta,\alpha} - t^{*}\|_{2} \leq  (4+24/\gamma) r^*  \quad  \phi(\hat t_{\lambda}^{\delta,\alpha} - f^{*}) \leq  (8+\gamma) \phi(t^*)   \quad \mbox{and} \quad  P\cL_{\hat t_{\lambda}^{\delta,\alpha}} \leq (4+3\gamma/4) (r^*)^2\enspace.  
	\end{align*}
\end{Theorem}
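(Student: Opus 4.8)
The plan is to derive Theorem~\ref{theo_elastic_net} as a direct instance of the ``meta theorem'' for the RERM, Theorem~\ref{thm_erm_conv2}, applied to the class of linear functionals $F=\{\inr{\cdot,t}:t\in\bR^p\}$, the Huber loss $\bar\ell^{\delta}$, and the elastic net penalty $\phi(t)=(1-\alpha)\|t\|_1+\alpha\|t\|_2^2$. Three of the five hypotheses of Theorem~\ref{thm_erm_conv2} are immediate: Assumption~\ref{assum:convex} holds since $F$ is a linear space; Assumption~\ref{assum:lip_conv} holds with $L=\delta$ because the Huber loss is $\delta$-Lipschitz and convex; and Assumption~\ref{assum:phi} holds for the elastic net with $\eta=2$, as already noted in Section~\ref{sec_elastic_net}. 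What remains is to check the sub-Gaussian Assumption~\ref{ass:sub-gauss}, to verify the local Bernstein Assumption~\ref{assum:fast_rates}, and to make the complexity parameter explicit; then one reads off the two conclusions of Theorem~\ref{thm_erm_conv2} for $\lambda=\lambda_0=(r^*)^2/\phi(f^*)$ and supplies the excess-risk bound separately.

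For Assumption~\ref{ass:sub-gauss}: since $X$ is isotropic and sub-Gaussian, for every $t$ the function $\inr{\cdot,t}-f^*=\inr{\cdot,t-t^*}$ satisfies $\|\inr{\cdot,t-t^*}\|_{L_2}=\|t-t^*\|_2$ and $\bE\exp(\lambda|\inr{X,t-t^*}|/\|t-t^*\|_2)\leq\exp(\lambda^2/2)$, so $F-f^*$ is $1$-sub-Gaussian, i.e.\ $B=1$. For Assumption~\ref{assum:fast_rates} I would invoke Theorem~\ref{huber_loss}: since $Y=\inr{X,t^*}+W=f^*(X)+W$ with $W$ symmetric and independent of $X$, one gets $F_{Y|X=x}(z)=F_W(z-f^*(x))$, so part~b) of Assumption~\ref{ass:ber_huber} at $L_2$-radius $r$ reduces to $F_W(v+\delta)-F_W(v-\delta)\geq\gamma$ uniformly over $|v|\leq(\sqrt2\,C')^{(2+\varepsilon)/\varepsilon}r$. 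Taking $\varepsilon=2$ — for which part~a) holds with an absolute constant $C'$ by the $L_4/L_2$ norm equivalence for sub-Gaussian vectors — the exponent becomes $2$, the worst $v$ is the endpoint $\pm 2(C')^2 r$, and the requirement is exactly the standing hypothesis $F_W(\delta-2(C')^2r^*)-F_W(2(C')^2r^*-\delta)\geq\gamma$. Theorem~\ref{huber_loss} then gives the Bernstein inequality with $A^*=4/\gamma$, and crucially only on the $\phi$-localized sphere demanded by Assumption~\ref{assum:fast_rates}.

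It remains to identify $r^*$. Under isotropy the Gaussian mean-width of Definition~\ref{def:function_r} equals $w(B_{\rho}^{\phi}(0)\cap rB_2^p)$ with $\rho=\eta(4+2/A^*)\phi(f^*)=(8+4/A^*)\phi(t^*)$; feeding this into the inclusion bound~\eqref{w_elastic} together with the explicit evaluations of $w(\rho B_1^p\cap rB_2^p)$ and $w(\rho'B_2^p)$ from~\cite{lecue2017regularization} shows that the fixed point of Definition~\ref{def:function_r} is controlled, up to constants, by $\min(r_1^*,r_2^*)=:r^*$, and yields its two-regime form (a Lasso-type rate $r_1^*$, a ridge-type rate $r_2^*$). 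Choosing $\lambda=\lambda_0=(r^*)^2/\phi(f^*)$ and substituting $A^*=4/\gamma$, $B=1$, $L=\delta$ into Theorem~\ref{thm_erm_conv2} gives, on an event of probability at least $1-2\exp(-N(r^*)^2/(4(32A^*LB)^2))=1-2\exp(-\gamma^2N(r^*)^2/(4\cdot128^2\delta^2))$, the bounds $\|\hat t_{\lambda}^{\delta,\alpha}-t^*\|_2\leq(4+6A^*)r^*=(4+24/\gamma)r^*$ and $\phi(\hat t_{\lambda}^{\delta,\alpha}-f^*)\leq(4+2/A^*)\eta\,\phi(f^*)=(8+\gamma)\phi(t^*)$. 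The excess-risk bound $P\cL_{\hat t_{\lambda}^{\delta,\alpha}}\leq(4+3/A^*)(r^*)^2=(4+3\gamma/4)(r^*)^2$ follows from the same deterministic analysis underlying Theorems~\ref{thm_erm_conv2} and~\ref{theo_lepski}: combine $P_N\cL_{\hat t_{\lambda}^{\delta,\alpha}}^{\lambda}\leq0$, the identity $\lambda\phi(f^*)=(r^*)^2$, the control of $(P-P_N)\cL$ on the event $\Omega$ (valid for $\hat t_{\lambda}^{\delta,\alpha}$ through the homogeneity argument of the sketch), and the $1/A^*$-strong convexity from Assumption~\ref{assum:fast_rates}.

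The step I expect to be the main obstacle is the verification of the local Bernstein condition Assumption~\ref{assum:fast_rates}: one must reconcile the $L_2$-radius at which Theorem~\ref{huber_loss} is applied with the $\phi$-localized sphere of Assumption~\ref{assum:fast_rates}, check part~a) of Assumption~\ref{ass:ber_huber} for the sub-Gaussian design, and then track the constants so that the resulting $A^*=4/\gamma$, the localization radius $\rho=(8+4/A^*)\phi(t^*)$, and the explicit fixed points $r_1^*,r_2^*$ (with their factors $64\delta BA^*$ and $8+4/A^*$) are mutually consistent with Definition~\ref{def:function_r}. By contrast, the mean-width interpolation between the Lasso and ridge regimes is a direct quotation of~\cite{lecue2017regularization} and should present no real difficulty.
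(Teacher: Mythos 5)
Your proposal follows essentially the same route as the paper: Theorem~\ref{theo_elastic_net} is obtained by instantiating Theorem~\ref{thm_erm_conv2} with $L=\delta$, $B=1$, $\eta=2$, $A^*=4/\gamma$ (via Theorem~\ref{huber_loss}, using the symmetry of $W$ and the $\varepsilon=2$ norm equivalence), and $r^*\leq\min(r_1^*,r_2^*)$ from the mean-width bound \eqref{w_elastic}, exactly as assembled in Section~\ref{sec_elastic_net}. One small caution on the excess-risk bound: the control of $(P-P_N)\cL_{\hat t_{\lambda}^{\delta,\alpha}}$ cannot be read off the event $\Omega$ alone, since the estimator is only guaranteed to lie at $L_2$-distance of order $(4+24/\gamma)r^*$ from $t^*$ rather than $r^*$; as in the proof of Theorem~\ref{theo_lepski}, one should work on the enlarged event $\tilde\Omega$, which affects only the constant in the probability exponent.
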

In Theorem~\ref{theo_elastic_net} we set $\lambda = (r^*)^2/\phi(f^*)$ which is evidently unknown. However it is possible to use Theorem~\ref{theo_lepski} to get an adaptive estimator for the Elastic net achieving the same rates. When $1-\alpha$ is close to $1$ that is when the penalization $\ell_1$ is dominant we have $r^* = r^*_1$ and we recover the result for the Lasso (see~\cite{lecue2017regularization}). When $\alpha$ is close to $1$ the elastic net is almost equivalent to ridge regression and $r^* = r^*_2$. We recover the results for the ridge regression.\\
In Theorem~\ref{theo_elastic_net} it is not clear if there exists $\gamma$ such that $F_W(\delta - 2(C')^2r^*) - F_W(2(C')^2r^* - \delta) \geq \gamma$. It turns out that this condition is very weak. It simply means that the noise $W$ puts enough mass around $0$. For instance let $W$ be a standard Cauchy distribution. The condition $F_W(\delta - 2(C')^2r^*) - F_W(2(C')^2r^* - \delta) \geq \gamma$ can be rewritten as $\delta - 2(C')^2r^* \geq \tan(\gamma \pi/2)$. If $r^* \leq 1$ we can take $\gamma =1$ and $\delta = 4(C')^2 +1$. The condition $r^* \leq 1$ means that enough data are given to the statistican which corresponds to interesting learning problems. Consequently, even for non-integrable noise such as a Cauchy distribution we are able to derive fast rates of convergence.


\paragraph{Minmax MOM-estimators}

Now, let us turn to the robust minmax MOM-estimator associated with the Huber loss function for the elastic net procedure defined as
\begin{equation} \label{def_en_MOM}
\hat t_{\lambda,S}^{\delta,\alpha} \in \argmin_{t \in \bR^p}  \sup_{\tilde t \in \bR^p}  MOM_S \big( \ell^{\delta}_t - \ell^{\delta}_{\tilde t} \big) + \lambda \big( (1-\alpha) (\|t\|_1 - \|\tilde t\|_1) + \alpha ( \|t\|_{2}^2 - \|\tilde t\|_2^2) \big)
\end{equation}
where $\ell^{\delta}$ denotes the Huber loss function with parameter $\delta$. 
To study these estimators, is necessary to compute the rademacher complexity given in the definition of $\tilde r(\cdot)$. From Theorem 1.6 in~\cite{mendelson2017multiplier}, it is possible to link Rademacher complexity and Gaussian mean-width for the Elastic-net regularization as long as $X$ is isotropic (i.e for all $t$ in $\bR^p$,  $\bE \inr{X,t}^2 = \|t\|_2^2$ ) and satisfies
\begin{equation} \label{cond_rad_en}
\forall 1 \leq q \leq c_1\log(p), 1\leq i \leq p, \quad  \|\inr{X,e_i}\|_{L_q} \leq c_1 \sqrt{q} \enspace,
\end{equation}
for $c_1,c_2>0$ two absolute constants and where $(e_i)_{i=1}^p$ denotes the canonical basis of $\bR^p$. Since any a real valued random variable $Z$ is $L_0$-sub-Gaussian if and only if for all $q \geq1$, $\|Z\|_{L_q} \leq c_3 L_0 \sqrt{q}$, for $c_3>0$ an absolute constant, the condition~\eqref{cond_rad_en} imposes “$c_1\log(p)$ sub-Gaussian moments“ on the design $X$. From Theorem 1.6 in~\cite{mendelson2017multiplier}, if condition~\eqref{cond_rad_en} holds, we get $\tilde r(\tilde A) \leq c_4 r(\tilde{A}) $ for $c_4>0$ an aboslute constant and the following theorem holds:
\begin{Theorem} \label{theo_elastic_net_mom}
	Let $\tilde r = c_4\min(r_1^*,r_2^*)$. Let $(X,Y)$ be a random variable such that $Y = \inr{X,t^*}+ W$, where $t^* \in \bR^p$ and $W$ a symmetric random variable independent from $X$ such that there exists $\gamma >0$ with $F_W(\delta - 2(C')^2\tilde r) - F_W(2(C')^2\tilde r - \delta) \geq \gamma$. $X$ is assumed to be an isotropic random vector satisfying condition~\eqref{cond_rad_en}. Assume that $(X_i,Y_i)_{i \in \cI}$ are independent and distributed as $(X,Y)$. Let $S\geq 7|\cO|/3$.  With probability larger than $1 - \exp(-S/504)$, the estimators $\hat t_{\lambda,S}^{\delta,\alpha}$ defined in \eqref{def_en_MOM} with
	\begin{equation*}
		\lambda = \frac{\max \big( (\tilde r)^2,\frac{5588\delta^2}{\gamma^2} \frac{S}{N}  \big)}{\phi(t^*)}
	\end{equation*}
	satisfies 
	\begin{align*}
		\| \hat t_{\lambda,S}^{\delta,\alpha} & - f^{*}\|_{2}^2 \leq  (8+\gamma)^2 \max \bigg( (\tilde r)^2, \frac{5588\delta^2}{\gamma^2} \frac{S}{N}  \bigg)  \quad   \phi(\hat t_{\lambda,S}^{\delta,\alpha} - f^{*}) \leq  (4+3\gamma/4) \phi(t^*) \enspace. 
	\end{align*}
\end{Theorem}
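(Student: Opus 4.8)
The strategy is to invoke the general minmax MOM theorem, Theorem~\ref{main:theo_mom}, for the specific choice $\phi(t) = (1-\alpha)\|t\|_1 + \alpha\|t\|_2^2$, the Huber loss $\bar\ell^\delta$, and the class $F$ of linear functionals on $\bR^p$. Thus the task reduces to verifying that each hypothesis of Theorem~\ref{main:theo_mom} holds, with the constants tracked explicitly, and then reading off the conclusion. First I would note that Assumption~\ref{assum:convex} is immediate since $F$ is a linear space, Assumption~\ref{assum:phi} holds with $\eta = 2$ (stated already in Section~\ref{sec_elastic_net}), and Assumption~\ref{assum:lip_conv} holds with $L = \delta$ because the Huber loss is $\delta$-Lipschitz and convex. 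Assumption~\ref{assum:moments} is part of the hypothesis: the $(X_i,Y_i)_{i\in\cI}$ are i.i.d. copies of $(X,Y)$, so the $L_2$-structure and the excess risk are common across informative indices.

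The substantive points are the complexity control and the local Bernstein condition. For the complexity, I would use the isotropy of $X$ to identify $\|\inr{X,t}\|_{L_2} = \|t\|_2$, so that the localized Rademacher complexity in \eqref{comp:rad} becomes a Rademacher width of $B_\rho^\phi(0)\cap rB_2^p$ with $\rho = \eta(4 + 2/\tilde A)\phi(t^*)$; then, under condition~\eqref{cond_rad_en}, Theorem~1.6 of~\cite{mendelson2017multiplier} gives $\tilde r(\tilde A) \le c_4\, r(\tilde A)$, and the computations already recorded in Section~\ref{sec_elastic_net} (following~\cite{lecue2017regularization}) identify $r(\tilde A) \le \min(r_1^*, r_2^*)$ via \eqref{w_elastic}; hence $\tilde r \le c_4\min(r_1^*,r_2^*)$, which is the quantity denoted $\tilde r$ in the statement. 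For the local Bernstein condition (Assumption~\ref{assum:fast_rates_MOM}), I would apply Theorem~\ref{huber_loss}: under the model $Y = \inr{X,t^*} + W$ with $W$ symmetric and independent of $X$, one has $t^* = t$, point (a) of Assumption~\ref{ass:ber_huber} holds with an absolute constant $C'$ and $\varepsilon = 2$ because $F - f^*$ is sub-Gaussian (Theorem~1.1.5 in~\cite{chafai2012interactions}), and point (b) is exactly the stated hypothesis $F_W(\delta - 2(C')^2\tilde r) - F_W(2(C')^2\tilde r - \delta) \ge \gamma$ evaluated at radius $\tilde r = \sqrt{C_{S,r}(\tilde A)}$; Theorem~\ref{huber_loss} then yields $\|f - f^*\|_{L_2}^2 \le (4/\gamma) P\cL_f$ on the relevant slice of the sphere, so Assumption~\ref{assum:fast_rates_MOM} holds with $\tilde A = 4/\gamma$.

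Finally I would substitute $\tilde A = 4/\gamma$ and $L = \delta$ into Theorem~\ref{main:theo_mom}. The constraint $C_{S,r}(\tilde A) = \max(\tilde r^2(\tilde A), 368\tilde A^2 L^2 S/N) = \max(\tilde r^2, (368\cdot 16/\gamma^2)\delta^2 S/N)$; since $368\cdot 16 = 5888$ — here I expect the clean constant $5588$ in the statement to come from a slightly tighter bookkeeping of the constant $384$ versus $368$ in \eqref{def_CK}, which I would re-derive rather than quote — the prescribed $\lambda = C_{S,r}(\tilde A)/\phi(t^*)$ is (up to that constant) the $\lambda$ in the theorem, and the conclusion $\phi(\hat f_S^\lambda - f^*) \le \eta(4 + 2/\tilde A)\phi(f^*) = 2(4 + \gamma/2)\phi(t^*) = (8+\gamma)\phi(t^*)$ and $\|\hat f_S^\lambda - f^*\|_{L_2} \le (4 + 6\tilde A)\lambda\phi(t^*)/\sqrt{C_{S,r}(\tilde A)} = (4 + 24/\gamma)\sqrt{C_{S,r}(\tilde A)}$ give exactly the stated bounds after squaring, with $(4+24/\gamma)^2 = (8+\gamma)^2/\cdots$ — I would double-check that $(4 + 6\tilde A)^2$ with $\tilde A = 4/\gamma$ matches the stated $(8+\gamma)^2$, adjusting the $\gamma$-dependence of the radius hypothesis if the bookkeeping requires $\tilde A$ slightly different from $4/\gamma$. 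The main obstacle is purely this constant-tracking: aligning $C'$, the numerical constants from \eqref{comp:rad} and \eqref{def_CK}, and the factor from Theorem~1.6 of~\cite{mendelson2017multiplier} so that the final constants collapse to the compact form $(8+\gamma)^2$ and $5588\delta^2/\gamma^2$ displayed in the theorem; the structural content is entirely contained in Theorems~\ref{main:theo_mom} and~\ref{huber_loss}.
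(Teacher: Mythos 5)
Your proposal is correct and follows essentially the same route as the paper, which offers no standalone proof of Theorem~\ref{theo_elastic_net_mom}: it is presented as a direct corollary of Theorem~\ref{main:theo_mom} with $L=\delta$, $\eta=2$, the bound $\tilde r(\tilde A)\le c_4\, r(\tilde A)\le c_4\min(r_1^*,r_2^*)$ from Theorem~1.6 of \cite{mendelson2017multiplier} under isotropy and \eqref{cond_rad_en}, and $\tilde A = 4/\gamma$ from Theorem~\ref{huber_loss}; your constant-tracking worries are warranted, since plugging in gives $368\cdot 16=5888$ rather than $5588$, and the general theorem yields $(4+24/\gamma)^2 C_{S,r}$ for the squared $L_2$ error and $(8+\gamma)\phi(t^*)$ for the regularization bound, so the displayed constants in the statement appear to be transposed. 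Two small points to tighten: in the MOM setting $X$ is not assumed sub-Gaussian, so Assumption~\ref{ass:ber_huber}(a) cannot be deduced from Theorem~1.1.5 of \cite{chafai2012interactions} and the constant $C'$ must be treated as an implicit hypothesis (as the paper does), and Assumption~\ref{assum:fast_rates_MOM} requires the small-mass condition on $W$ at radius $\sqrt{C_{S,r}(\tilde A)}$, which can exceed $\tilde r$ when $S/N$ is large, a looseness already present in the paper's statement.
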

When $S \lesssim N (\tilde r)^2$, Theorem~\ref{theo_elastic_net_mom} improves Theorem~\ref{theo_elastic_net} by relaxing the sub-Gaussian Assumption. Moreover, for $S \asymp N (\tilde r)^2$ up to $3N (\tilde r)^2/7$ outliers can be present in the dataset without affecting the error rate. Note also that it is possible to adapt the estimator in a data-driven way to the best $S$ and $\lambda$ by using a Lepski's adaptation as we have done in Theorem~\ref{theo_lepski}. 

\begin{Remark}
	In Theorems~\ref{theo_elastic_net} and~\ref{theo_elastic_net_mom}, we assumed that the design $X$ is isotropic. This assumption is only used for the computation of the Gaussian mean-with of the intersection of the $\ell_1$ ball with the $\ell_2$ ball. Using the recent work from~\cite{bellec2017localized} it is possible to extend the result for more general covariance matrices. 
\end{Remark}

\subsection{Application to RKHS} \label{sec_svm}

In this section, we consider regularization methods in some general Reproducing Kernel Hilbert Space (RKHS) (cf. \cite{steinwart2008support} for a specific analysis on RKHS). The regularization function $\phi(\cdot)$ is defined as $ \phi(\cdot) = \|\cdot\|_{\cH_K}^2$ where $ \|\cdot\|_{\cH_K}$ is the norm in the space $\cH_K$ associated to a kernel $K$. 
This section is inspired from the work in~\cite{pierre2019estimation}. The authors established convergence rates when $\phi(\cdot) = \|\cdot\|_{\cH_k}$ and $F = RB_{\cH_K}$, for $R>0$, for classification problems under a much stronger global Margin assumption. We improve their work in many aspects 1) heavy-tailed noise can be handled, 2) the margin assumption is replaced by the weaker local Bernstein condition, 3) we can analyse the regularization $\phi(\cdot) = \|\cdot\|_{\cH_K}^2$  and 4) there is no restriction on the class $F = \cH_K$, we do not restrict $F$ to be a regularization ball in $\cH_K$. \\
Using Theorems~\ref{main:theo_mom} we derive explicit bounds on the error rates depending on $\|f^*\|_{\cH_K}$ for the minimax-MOM estimators. For the RERM, we could use Theorem~\ref{thm_erm_conv2}. However, it turns out that the sub-Gaussian Assumption~\ref{ass:sub-gauss} on the class $F-f^*$ is complicated to verify for RKHS and the application of Theorem~\ref{thm_erm_conv2} may be tricky. Instead, we derive another analysis where no sub-Gaussian assumption is required. In the precise example of RKHS, our homogeneity argument implies that we can restrict ourselves to a bounded class of functions. As a consequence, we can use concentration tools such as Talagrand's inequality instead of results from the sub-Gaussian theory. Nothing has to be assumed on the design $X$. \\

We are given $N$ pairs $(X_i,Y_i)_{i=1}^N$ of random variables where the $X_i$'s take their values in some measurable space $\cX$ and $Y_i \in \cY$ where $\cY = \{-1,1\}$ for binary classification problems and $\cY = \bR$ for regression problems.  We introduce a kernel $K: \cX \times \cX \mapsto \bR$ measuring a similarity between elements of $\cX$ i.e $K(x_1,x_2)$ is small if $x_1,x_2 \in \cX$ are “similar".  The main idea of kernel methods is to transport the design data $X_i$'s from the set $\cX$ to a certain Hilbert space via the application $x \mapsto K(x,\cdot) := K_x(\cdot)$ and construct a statistical procedure in this "transported" and structured space. The kernel $K$ is used to generate an Hilbert space known as Reproducing Kernel Hilbert Space (RKHS). Recall that if $K$ is a positive definite function i.e for all $n \in \bN^* $, $x_1,\cdots,x_n \in \cX$ and $c_1,\cdots,c_n \in \bR$, $\sum_{i=1}^n \sum_{j=1}^n c_i c_j K(x_i,x_j) \geq 0$, then by Mercer's theorem there exists an orthonormal basis $(\phi_i)_{i=1}^{\infty}$ of $L_2(\mu)$ such that $\mu \times \mu$ almost surely, $K(x,y) = \sum_{i=1}^{\infty} \lambda_i \phi_i(x) \phi_i(y)$, where $(\lambda)_{i=1}^{\infty}$ is the sequence of eigenvalues (arranged in a non-increasing order) of $T_K$ and $\phi_i$ is the eigenvector corresponding to $\lambda_i$ where 
\begin{align} \label{def_tk}
	\nonumber T_K: \; & L_2(\mu) \to L_2(\mu) \\
	& (T_Kf)(x) = \int K(x,y) f(y) d\mu (y)  \enspace.
\end{align}
The Reproducing Kernel Hilbert Space $\cH_K$ is the set of all functions of the form $\sum_{i=1}^{\infty} a_i K(x_i,\cdot)$ where $x_i \in \cX$ and $a_i \in \bR$ converging in $L_2(\mu)$ endowed with the inner product
\begin{equation*}
	\inr{\sum_{i=1}^{\infty} a_i K(x_i,\cdot),\sum_{i=1}^{\infty} b_i K(y_i,\cdot)} = \sum_{i,j=1}^{\infty} a_i b_j K(x_i,y_i) \enspace.
\end{equation*}
An alternative way to define a RKHS is via the feature map $\Phi: \cX \mapsto \ell_2$ such that $\Phi(x) = \big(\sqrt{\lambda_i}\phi_i(x)\big)_{i=1}^{\infty} $. Since $(\Phi_k)_{k=1}^{\infty}$ is an orthogonal basis of $\cH_K$, it is easy to see that the unit ball of $\cH_K$ can be expressed as 
\begin{equation}
B_{\cH_K} = \{ f_{\beta}(\cdot) = \inr{\beta,\Phi(\cdot)}_{\ell_2}, \; \| \beta \|_{2}  \leq 1  \}  \enspace,
\end{equation}
where $\inr{\cdot,\cdot}_{\ell_2}$ is the standard inner product in the  Hilbert space $\ell_2$. 
In other words, the feature map $\Phi$ can the used to define an isometry between the two Hilbert spaces $\cH_K$ and $\ell_2$. \\
The RKHS $\cH_K$ is therefore a convex class of functions from $\cX$ to $\bR$ that can be used as a learning class $F$. Let the \emph{oracle} $f^*$ be defined as 
\begin{equation*}
	f^* \in \argmin_{f \in \cH_K} \bE [  \bar{\ell}(f(X),Y) ] \enspace.
\end{equation*}
Let $f$ be in $\cH_K$, by the reproducing property and Cauchy-Schwarz we have for all $x,y$ in $\cX$
\begin{align} \label{bounded:RKHS}
	|f(x)-f(y)| = \inr{f, K_x -K_y} \leq \|f\|_{\cH_K}  \| K_x - K_y \|_{\cH_K}   \enspace.
\end{align}
From Equation~\eqref{bounded:RKHS}, it is clear that the norm of a function in the RKHS controls how fast the function varies over $\cX$ with respect to the geometry defined by the kernel (Lipschitz with constant $\|f \|_{\cH_K})$. As a consequence the norm of regularization $\| \cdot \|_{\cH_K}$ is related with its degree of smoothness w.r.t. the metric defined by the kernel on $\cX$. Let $\bar \ell$ be any loss function satisfying Assumption~\ref{assum:lip_conv}, the estimators $\hat{f}_{\lambda}^{\phi}$ and $\hat{f}_{\lambda,S}^{\phi}$ defined respectively in Equation~\eqref{def_erm2} and \eqref{def:MOM}  are given by
\begin{equation} \label{def_svm}
\hat{f}_{\lambda}^{\phi} = \argmin_{f \in \cH_K} \frac{1}{N} \sum_{i=1}^{N} \bar \ell(f(X_i),Y_i) + \lambda \|f\|_{\cH_K}^2
\end{equation}
and
\begin{equation} \label{def_svm_mom}
\hat{f}_{\lambda,S}^{\phi} = \argmin_{f \in \cH_K} \sup_{g \in \cH_K} MOM_S(\ell_f-\ell_g) + \lambda\big(  \|f\|_{\cH_K}^2-\|g\|_{\cH_K}^2 \big).
\end{equation}
It is clear that $\phi(\cdot) = \|\cdot\|_{\cH_K}^2$ verifies Assumption~\ref{assum:phi} with $\eta = 2$
We establish oracle inequalities for $\hat{f}_{\lambda}^{\phi} $ and $\hat{f}_{\lambda,S}^{\phi}$ respectively defined in Equation~\eqref{def_svm} and~\eqref{def_svm_mom} when the loss satisfies Assumption~\ref{assum:lip_conv}. In~\cite{mendelson2010regularization,meister2016optimal,wu2006learning,smale2007learning} for the quadratic loss function and~\cite{eberts2013optimal,farooq2017learning} for the pinball loss (which is Lipschitz), the authors establish error bounds for when the target $Y$ is assumed to satisfy $Y \in [-M,M]$ almost surely which is a really strong Assumption. Our analysis applies when the target $Y$ is unbounded and may even be heavy-tailed which is, as far as we know, a new result. In~\cite{caponnetto2007optimal} the authors do not assume that the target $Y$ is bounded. However, their analysis requires to control the Laplace transform of the noise $Y-f^*(X)$ (see Assumption 2 in~\cite{caponnetto2007optimal}). As a consequence they cannot consider heavy-tailed noise. In~\cite{eberts2013optimal,farooq2017learning} the authors are also interested in the approximation error of kernel methods and compare ourselves with their results is a complicated task. We obtain the same error rate as ~\cite{mendelson2010regularization,caponnetto2007optimal} when the eigenvalues of the integral operator $T_K$ satisfies $\lambda_n   \leq \beta n^{-1/p}$ for some $0 < p <1$ and $\beta>0$ an absolute constant when $Y$ may be \textbf{unbounded and heavy-tailed}. The value of $p$ is related with the smoothness of the space $\cH_K$. Different kinds of spectrum could be analysis. It would only change the computation of the complexity fixed-points. For the sake of simplicity we only focuse on this example as it has been studied in~\cite{caponnetto2007optimal,mendelson2010regularization} for instance. \\

\subsubsection{New general analysis for the RERM} \label{sec_special_case_rkhs}
Since every RKHS are convex, Assumption~\ref{assum:convex} holds. Therefore, when the loss function satisfies Assumption~\ref{assum:lip_conv}, to use Theorem~\ref{thm_erm_conv2} it is necessary to verify Assumptions~\ref{ass:sub-gauss} and~\ref{assum:fast_rates}. However, it turns out that the sub-Gaussian Assumption on the class $F-f^*$ cannot be verfied in practice except for very precise Kernels. Our analysis (see Section~\ref{sec_proof}) requires the sub-Gaussian Assumption to show that with an exponentially large probability for all $f$ in $F$ such that $\|f-f^*\|_{L_2} \leq r(A^*)$ and $\phi(f-f^*) \leq \eta(2+2/A^*)  \phi(f^*)$: 
\begin{equation} \label{sto_arg}
\big | (P-P_N) \cL_f \big|  \leq \frac{r^2(A^*)}{2A^*} \enspace,
\end{equation}
where $A^*$ satisfies Assumption~\ref{assum:fast_rates} and $r(\cdot)$ is the complexity parameter defined in Definition~\ref{def:function_r}. However, when $F = \cH_K$ we have $\{ f \in F:  \phi(f-f^*) \leq \eta(2+2/A^*) \phi(f^*)  \} =  \{ f \in \cH_K : \|f-f^*\|_{\cH_K} \leq 2\sqrt{1+1/A^*} \|f^*\|_{\cH_K}    \}$. Morever, from the reproducible property, for all $x \in \cX$ and all $f$ in $\cH_K$ such that $\|f-f^*\|_{\cH_K} \leq 2\sqrt{1+1/A^*} \|f^*\|_{\cH_K} $ we have
\begin{align*}
	|f(x)-f^*(x)| = \inr{f-f^*,K_x}_{\cH_K}& \leq \|f-f^*\|_{\cH_K} \|K_x\|_{\cH_K} \\
	&=  \|f-f^*\|_{\cH_K} \sqrt{K(x,x)} \leq 2\sqrt{(1+1/A^*)\|K\|_{\infty}} \|f^*\|_{\cH_K} 
\end{align*}
Therefore, when $F = \cH_K$, for $K$ a bounded Kernel,  the control of \eqref{sto_arg} is over a  bounded class of functions. As a consequence, the sub-Gaussian Assumption is no longer necessary. Instead we develop another analysis based of the Bousquet's version of Talagrand's inquality~\cite{bousquet2002bennett}. Since no sub-Gaussian assumption is required we use another complexity parameter where the Rademacher complexity replaces the Gaussian mean-width.
\begin{equation} \label{def:function_r_rkhs}
\bar r(A) = \inf \bigg\{r>0, \quad  \bE \sup_{ \substack{f \in F: \|f-f^*\|_{L_2} \leq r, \\  \enspace \|f-f^*\|_{\cH_K} \leq 2\sqrt{2+1/A} \|f^*\|_{\cH_K}  }   } \enspace \sum_{i=1}^N \sigma_i (f-f^*)(X_i)  \leq \frac{N r^2}{64AL} \bigg\}
\end{equation}
We also adapt the local Bernstein assumption to the Definition~\eqref{def:function_r_rkhs}.
\begin{Assumption}\label{assum:fast_rates_RKHS} There exists a constant $\bar A \geq 1$ such that for all $f\in \cH_K$ if \\$\norm{f-f^*}_{L_2} =   2L\sqrt{(2+1/\bar A)\|K\|_{\infty}} \|f^*\|_{\cH_K} \bar  r(\bar A)$ and $ \|f-f^*\|_{\cH_K} \leq 2\sqrt{2+1/\bar A} \|f^*\|_{\cH_K}  $ then $\|f-f^{*}\|_{L_2}^2\leqslant \bar A P\mathcal{L}_f $. 
\end{Assumption}

\begin{Theorem} \label{thm:erm_RKHS}
	Let $(X_i,Y_i)_{i=1}^N$ be i.i.d random variables with commom distribution $P$. Let $\ell$ be a loss function satisfying Assumption~\ref{assum:lip_conv} with $L \geq1$. Let $\cH_K$ be a RKHS associated to a bounded Kernel $K$. Grant Assumption~\ref{assum:fast_rates_RKHS} such that $\bar A \geq 1$. Let $U = 2L\sqrt{(2+1/\bar A)\|K\|_{\infty}}$.
	With probability larger than
	\begin{equation*}
		1-2\exp\bigg(-   \frac{N \bar r^2(\bar A)}{64(L\bar A)^2} \bigg)
	\end{equation*}
	for all regularization parameters $\lambda \geq \lambda_0 = \max(1,U\|f^*\|_{\cH_K}) \bar  r^2(\bar A) / \|f^*\|_{\cH_K}^2 $ the estimators $\hat{f}^{\phi}_{\lambda}$ defined in Equation~\eqref{def_svm} satisfies
	\begin{align*}
	&	\|\hat{f}^{\phi}_{\lambda} - f^{*}\|_{L_2} \leq ( 4+6\bar A ) \lambda \frac{ \|f^*\|_{\cH_K}^2 }{ \max(1, \sqrt {U\|f^*\|_{\cH_K}})\bar r(\bar A)} \\
		& \mbox{and} \quad  \| \hat{f}^{\phi}_{\lambda} - f^{*} \|_{\cH_K}\leq  (8+4/\bar A) \|f^*\|_{\cH_K}.  
	\end{align*}
\end{Theorem}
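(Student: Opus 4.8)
The plan is to transpose the proof of Theorem~\ref{thm_erm_conv2} to the RKHS setting, keeping its two-step structure (a deterministic argument on a favourable event $\Omega$, then a bound on $\bP(\Omega)$) but replacing the sub-Gaussian concentration step by a bounded-process argument. The point that makes this possible is the one already isolated before the theorem: on the RKHS ball $\{f:\ \|f-f^*\|_{\cH_K}\le 2\sqrt{2+1/\bar A}\,\|f^*\|_{\cH_K}\}$ the reproducing property and Cauchy--Schwarz give $\|f-f^*\|_{L_\infty}\le 2\sqrt{(2+1/\bar A)\|K\|_\infty}\,\|f^*\|_{\cH_K}=(U/L)\|f^*\|_{\cH_K}$, so all the suprema we must control live over a class uniformly bounded in sup-norm, and in particular $\|\cL_f\|_{L_\infty}\le L\|f-f^*\|_{L_\infty}\le U\|f^*\|_{\cH_K}$. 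Writing $R^*=U\|f^*\|_{\cH_K}\,\bar r(\bar A)$ for the $L_2$-radius appearing in Assumption~\ref{assum:fast_rates_RKHS}, I would introduce the ``good ball''
\[
\cB_\lambda=\Bigl\{f\in\cH_K:\ \|f-f^*\|_{L_2}\le \lambda\frac{\|f^*\|_{\cH_K}^2}{\max(1,\sqrt{U\|f^*\|_{\cH_K}})\,\bar r(\bar A)}\ \ \text{and}\ \ \|f-f^*\|_{\cH_K}^2\le\|f^*\|_{\cH_K}^2\Bigr\}
\]
and the event $\Omega=\bigl\{\,\forall f\in\cH_K\cap(f^*+R^*B_{L_2})\cap\{\|f-f^*\|_{\cH_K}^2\le\|f^*\|_{\cH_K}^2\}:\ |(P-P_N)\cL_f|\le (R^*)^2/(16\bar A)\,\bigr\}$.

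Since $\hat{f}_{\lambda}^{\phi}$ is a minimiser, $P_N\cL_{\hat{f}_{\lambda}^{\phi}}^{\lambda}\le 0$, so it suffices to prove that on $\Omega$ one has $P_N\cL_f^{\lambda}>0$ for every $f\in\cH_K\setminus\cB_\lambda$; this forces $\hat{f}_{\lambda}^{\phi}\in\cB_\lambda$, which gives both stated bounds at once. For this deterministic step I would reproduce verbatim the homogeneity argument from the sketch of Theorem~\ref{thm_erm_conv2}: given $f\notin\cB_\lambda$, pick $\alpha\ge1$ and $f_0$ on the boundary of $\cB_\lambda$ with $f-f^*=\alpha(f_0-f^*)$; then convexity of the loss ($\alpha\psi_i(u)\le\psi_i(\alpha u)$) yields $P_N\cL_f\ge\alpha P_N\cL_{f_0}$ and convexity of $\phi=\|\cdot\|_{\cH_K}^2$ with Assumption~\ref{assum:phi} ($\eta=2$) yields $\phi(f)-\phi(f^*)\ge\alpha(\phi(f_0)-\phi(f^*))$, hence $P_N\cL_f^{\lambda}\ge\alpha P_N\cL_{f_0}^{\lambda}$. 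It then remains to check $P_N\cL_{f_0}^{\lambda}>0$ in the two boundary cases exactly as there: if $\phi(f_0-f^*)=\phi(f^*)$, the regularization contributes a positive term of order $\lambda\phi(f^*)$ against which the fluctuation bounded by $\Omega$ and the (possibly negative) empirical excess risk are negligible once $\lambda\ge\lambda_0$; if $\|f_0-f^*\|_{L_2}=R^*$ and $\phi(f_0-f^*)\le\phi(f^*)$, the local Bernstein Assumption~\ref{assum:fast_rates_RKHS} applies (its hypotheses hold on the boundary since $\|f_0-f^*\|_{\cH_K}\le\|f^*\|_{\cH_K}\le 2\sqrt{2+1/\bar A}\,\|f^*\|_{\cH_K}$) and gives $P\cL_{f_0}\ge (R^*)^2/\bar A$, which dominates $-\lambda\phi(f^*)$ and the $\Omega$-fluctuation. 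Tracking constants reproduces $(4+6\bar A)$ and $(8+4/\bar A)$.

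The main work, and the main obstacle, is the bound $\bP(\Omega)\ge 1-2\exp\bigl(-N\bar r^2(\bar A)/(64(L\bar A)^2)\bigr)$, for which I would apply Bousquet's version of Talagrand's inequality~\cite{bousquet2002bennett} to $\sup_f|(P-P_N)\cL_f|$ over the bounded localized class appearing in $\Omega$, with three inputs: the sup-norm bound $\|\cL_f\|_{L_\infty}\le U\|f^*\|_{\cH_K}$ above; the weak variance $\sup_f P\cL_f^2\le L^2\sup_f\|f-f^*\|_{L_2}^2\le L^2(R^*)^2$ from the Lipschitz property; and the expected supremum, which by symmetrization and the contraction principle for Rademacher processes (the loss being $L$-Lipschitz with $\psi_i(0)=0$) is at most $(cL/N)\,\E\sup_f\bigl|\sum_i\sigma_i(f-f^*)(X_i)\bigr|$ over the same class. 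For the last quantity I would use that the localized set is star-shaped around $f^*$, so that the normalized Rademacher complexity at $L_2$-scale $r$ (with the $\cH_K$-constraint held fixed) is non-increasing in $r$; this lets me pass from scale $R^*$ down to scale $\bar r(\bar A)$ and invoke the very definition~\eqref{def:function_r_rkhs} of $\bar r(\bar A)$. This is precisely where the factors $\max(1,U\|f^*\|_{\cH_K})$ in $\lambda_0$ and $\max(1,\sqrt{U\|f^*\|_{\cH_K}})$ in the $L_2$-bound, together with $L\ge1$ and $\bar A\ge1$, come in to absorb the resulting numerical constants. Plugging these three estimates into Bousquet's inequality with deviation level of order $(R^*)^2/\bar A$ then yields $\Omega$ with the announced probability; the delicate point throughout is the bookkeeping of the three nested neighbourhoods of $f^*$ — in $L_2$, in $\cH_K$ and in $L_\infty$ — so that the contraction step, the definition of $\bar r(\bar A)$ and the local Bernstein Assumption all refer to compatible balls.
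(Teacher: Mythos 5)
Your proposal follows essentially the same route as the paper: the deterministic part is the homogeneity/projection argument of Theorem~\ref{thm_erm_conv2} rerun on an RKHS-localized ball, and the stochastic part replaces the sub-Gaussian step by Bousquet's inequality applied to the class made uniformly bounded via the reproducing property, with symmetrization, contraction and the definition of $\bar r(\bar A)$ supplying exactly the three inputs you list (the star-shapedness monotonicity you invoke is indeed the step the paper leaves implicit when passing from scale $\bar r(\bar A)$ to the larger $L_2$-radius). The only divergence is bookkeeping of the localization radius: you take $R^*=U\|f^*\|_{\cH_K}\bar r(\bar A)$ from Assumption~\ref{assum:fast_rates_RKHS}, whereas the paper's proof works at $r^*=\max(1,\sqrt{U\|f^*\|_{\cH_K}})\,\bar r(\bar A)$, which is what produces the stated $\lambda_0$ and the factor $\max(1,\sqrt{U\|f^*\|_{\cH_K}})$ in the $L_2$-bound; with your choice of $R^*$ the condition on $\lambda$ would instead read $\lambda\geq U^2\bar r^2(\bar A)$, so the constants and threshold do not come out exactly as in the statement, although this mismatch between the radius of Assumption~\ref{assum:fast_rates_RKHS} and the radius used in the proof is already present in the paper itself.
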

The proof can be find in Section~\ref{sec:proof_RKHS}. Theorem~\ref{thm:erm_RKHS} is similar to Theorem~\ref{thm_erm_conv2} for RKHS when the sub-Gaussian assumption is relaxed. By taking $\lambda = \lambda_0$ we get 
\begin{align*}
	\|\hat{f}^{\phi}_{\lambda} - f^{*}\|_{L_2} \leq  (4+6\bar A) \max(1, \sqrt{U\|f^*\|_{\cH_K}}) \bar r(\bar A) \enspace.
\end{align*}
When $\|f^*\|_{\cH_K} \leq M$, we obtain the same bounds as the one in Theorem~\ref{thm_erm_conv2} (up to a constant depending on $\bar A$ and $\|K\|_{\infty}$) and a Lespki's procedure as in Theorem~\ref{theo_lepski} yields to an adaptive estimator. Note that the assumption that $\|K\|_{\infty} < \infty$ is really weak since any continuous kernel on a compact space is bounded. Moreover many results in RKHS are derived for the Gaussian Kernel with is bounded by $1$, \cite{farooq2017learning,steinwart2008support}. 

\subsubsection{Explicit bounds for the ERM and the minmax MOM estimators}
To obtain explicit bounds in Theorems~\ref{main:theo_mom} and~\ref{thm:erm_RKHS} it is necessary to calculate the complexity parameters $\bar r(\bar A)$ and $\tilde r(\tilde A)$. To do so, we have to compute the Rademacher complexity of the set $\{f \in \cH_K: \; \|f-f^*\|_{\cH_K}^2 \leq \rho, \; \|f-f^*\|_{L_2}\leq r  \}$ for any $\rho, r > 0$. From Theorem 2.1 in~\cite{mendelson2003performance}, if $K$ is a bounded kernel, then for all $\rho,r >0$
\begin{equation*}
	\mathbb{E} {\sup_{f \in \cH_K  \cap (f^*+ rB_{L_2} \cap \rho B_{\cH_K} )} \frac{1}{\sqrt N}\bigg |{\sum_{i  =1}^N \sigma_i (f-f^*)(X_i)} \bigg |}  \leq \sqrt{2} \|K\|_{\infty} \bigg( \sum_{k=1}^{\infty} \big(  \rho^2 \lambda_k \wedge r^2 \big) \bigg)^{1/2}
\end{equation*}

\begin{Remark}
	Since the feature map $\Phi$ defines an isometry between $\cH_K$ and $\ell_2$, the computation of the Gaussian mean-width of the set $\{f \in \cH_K: \; \|f-f^*\|_{\cH_K}^2 \leq \rho, \; \|f-f^*\|_{L_2}\leq r  \}$ is equivalent to the computation of the Gaussian mean-width of an ellipsoid in $\ell_2$. Consequently, it is easy to show that Rademacher complexity and Gaussian mean-width (and thus $\bar r(A)$ and $r(A)$ ) are equivalent. 
\end{Remark}
In the case where the eigenvalues $\lambda_k \leq \beta k^{-1/p}$ for all $k \in \bN^*$ and $0<p< 1$, where $\beta >0$ is an absolute constant and $\rho/r \geq 1$, straightforward computations give
\begin{equation*}
	\bigg( \sum_{k=1}^{\infty} \big(  \rho^2 \lambda_k \wedge r^2 \big) \bigg)^{1/2} \leq \beta \frac{\rho^p}{r^{p-1}} 
\end{equation*} 
It follows that for any bounded kernel $K$ such that the eigenvalues assoicated to $T_K$ satisfy $\lambda_k \leq \beta k^{-1/p}$ for all $k \in \bN^*$ and $0<p< 1$ and $A>0$
\begin{align*}
	& \tilde r^2(A) =  C(A,\beta ,L , p) \frac{\|f^*\|^{(2p)/(p+1)}_{\cH_K}}{N^{1/(p+1)}} = 6  \bar r^2( A)
	\\ 
	& \mbox{where} \quad C(A,\beta ,L , p) = \big( 384 A \beta L \big)^{2/(p+1)}  \big( 4(2+1/  A)  \big)^{2p/(p+1)}
\end{align*}

Now, let us turn to  Bernstein condition. We use the results from~\cite{chinot} where the local Bernstein condition has been extensively studied for many convex and Lipschitz loss functions. In Section~\ref{sec_elastic_net} we studied the Huber loss function. Here, we consider the absolute loss (which is the quantile loss for $\tau=1/2$). Let us present the Assumptions required to study the Bernstein condition for the quantile loss function. 
\begin{Assumption} \label{ass:ber_quantile} Let $r,\rho,\varepsilon >0$.
	\begin{itemize}
		\setlength{\itemsep}{1pt}
		\setlength{\parskip}{0pt}
		\setlength{\parsep}{0pt}
		\item a) There exists $C'>0$ such that for all $ f\in F$ such that $\|f-f^*\|_{L_2} = r$ and $\|f-f^*\|_{\cH_K} \leq \rho$, $\|f-f^{*}\|_{L_{2+\varepsilon}} \leq C'\|f-f^{*}\|_{L_2}$ 
		\item b) Let $C'$ be the constant defined above. There exists $\alpha>0$ such that, for all $x\in\cX$ and for all $z$ in $\mathbb{R}$ such that $ |z-f^{*}(x) | \leq (\sqrt{2}C')^{(2+\varepsilon)/\varepsilon} r $, we have $ f_{Y|X=x}(z) \geq \gamma$, where $f_{Y|X=x}$ is the conditional density function of $Y$ given $X=x$.
	\end{itemize}
\end{Assumption}
Assumption~\ref{ass:ber_quantile} and~\ref{ass:ber_huber} are very similar. When $Y = f^*(X) +W$, for $f^*$ in $\cH_K$ and $W$ is a symmetric noise, condition b) simply means that the noise $W$ puts enough mass around $0$.

\begin{Theorem} [\cite{chinot}]\label{quantile_loss}
	Grant Assumptions \ref{ass:ber_quantile} (with parameter $r,\rho$ and $\gamma$). Then, for all $f\in F$ satisfying $\|f-f^*\|_{L_2} = r$ and $\|f-f^*\|_{\cH_K} \leq \rho$, $\|f-f^*\|_{L_2}^2\leqslant (4/\gamma) P\mathcal{L}_f$.	
\end{Theorem}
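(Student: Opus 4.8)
The plan is to reduce the Bernstein inequality $\|f-f^*\|_{L_2}^2 \leq (4/\gamma) P\mathcal L_f$ to a pointwise analysis of the one-dimensional functions $u \mapsto \bar\ell(u,y) = |u-y|$, exactly as in~\cite{chinot}, with the only change being that the radius/regularization constraint $\|f-f^*\|_{L_2}=r$, $\|f-f^*\|_{\cH_K}\leq\rho$ now plays the role of the sphere constraint. Fix $f\in F$ with $\|f-f^*\|_{L_2}=r$ and $\|f-f^*\|_{\cH_K}\leq\rho$. The first step is to lower bound the \emph{conditional} excess risk. For the absolute loss, writing $\xi=Y-f^*(X)$ and $h=f-f^*$, one has the identity
\begin{equation*}
	\bE\big[\,|h(X)+f^*(X)-Y| - |f^*(X)-Y|\;\big|\;X\big] = \int_0^{h(X)} \big(F_{Y|X}(f^*(X)+t) - F_{Y|X}(f^*(X))\big)\,dt
\end{equation*}
(up to the usual care with signs when $h(X)<0$), which follows from integrating the subgradient of $|\cdot|$ against the conditional law of $Y$ and using that $f^*$ is the conditional median — or, if $f^*$ is only the global minimizer, by a convexity argument showing the integrand is nonnegative. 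Then Assumption~\ref{ass:ber_quantile} b) gives a lower bound $f_{Y|X=x}(z)\geq\gamma$ on the relevant interval, so that on the event $\{|h(X)|\leq (\sqrt2 C')^{(2+\varepsilon)/\varepsilon} r\}$ the inner integral is at least $(\gamma/2) h(X)^2$.

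The second step handles the complementary event $\{|h(X)| > (\sqrt2 C')^{(2+\varepsilon)/\varepsilon} r\}$, whose small probability is controlled by the $L_{2+\varepsilon}$-$L_2$ norm comparison of Assumption~\ref{ass:ber_quantile} a). By Markov/Chebyshev applied to $|h(X)|^{2+\varepsilon}$ together with $\|h\|_{L_{2+\varepsilon}}\leq C'\|h\|_{L_2}=C'r$, this probability is at most $(C'r)^{2+\varepsilon} / \big((\sqrt2 C')^{(2+\varepsilon)/\varepsilon} r\big)^{2+\varepsilon} = 2^{-(2+\varepsilon)/2}$, i.e.\ bounded by $1/2$ with room to spare. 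One then uses Cauchy--Schwarz (or Hölder with exponents matched to $\varepsilon$) to show that the contribution of $h^2 \mathbbm 1_{\{|h(X)|>\cdots\}}$ to $\|h\|_{L_2}^2$ is at most half of $\|h\|_{L_2}^2$, so that $\bE[h^2 \mathbbm 1_{\{|h(X)|\leq\cdots\}}] \geq \tfrac12\|h\|_{L_2}^2$.

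Combining the two steps: $P\mathcal L_f = \bE\big[\int_0^{h(X)}(\cdots)dt\big] \geq (\gamma/2)\,\bE[h(X)^2 \mathbbm 1_{\{|h(X)|\leq\cdots\}}] \geq (\gamma/4)\|h\|_{L_2}^2$, which rearranges to $\|f-f^*\|_{L_2}^2 \leq (4/\gamma) P\mathcal L_f$. The main obstacle is the first step: making the lower bound $\int_0^{h(X)}(F_{Y|X}(f^*(X)+t)-F_{Y|X}(f^*(X)))\,dt \geq (\gamma/2)h(X)^2$ fully rigorous requires knowing that $f^*(X)$ is (close to) the conditional median, or at least that the CDF increment has the right sign, which is where the convexity of the loss and the definition of $f^*$ as a global risk minimizer must be invoked carefully — together with the fact that Assumption~\ref{ass:ber_quantile} b) is only assumed on the interval $|z-f^*(x)|\leq(\sqrt2 C')^{(2+\varepsilon)/\varepsilon}r$, so one must check that the truncation in step two keeps $t$ inside exactly this interval. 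Since all of this is carried out in~\cite{chinot} for the quantile loss on the full $L_2$-sphere, the proof here is identical modulo replacing "$\|f-f^*\|_{L_2}=r$" by the pair of constraints "$\|f-f^*\|_{L_2}=r$ and $\|f-f^*\|_{\cH_K}\leq\rho$", which only shrinks the class of $f$ under consideration and hence does not affect any of the estimates.
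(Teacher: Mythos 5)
Your overall architecture is the same as that of the proof in \cite{chinot} which the paper invokes: a pointwise second-order lower bound on the conditional excess risk using the density bound of Assumption~\ref{ass:ber_quantile} b), combined with a truncation step where the $L_{2+\varepsilon}/L_2$ equivalence, Markov and H\"older (with the constant $(\sqrt2 C')^{(2+\varepsilon)/\varepsilon}$) show that the truncated part retains at least half of $\|f-f^*\|_{L_2}^2$; and your closing remark that the extra constraint $\|f-f^*\|_{\cH_K}\le\rho$ only shrinks the class, hence changes nothing in the estimates, is exactly the observation made in the paper. Your second step is correct as stated.

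The gap is in your first step. The identity you write is false in general: for the absolute loss, with $h=f-f^*$,
\begin{equation*}
\bE\big[\,|f(X)-Y|-|f^*(X)-Y|\;\big|\;X=x\big]\;=\;h(x)\big(2F_{Y|X=x}(f^*(x))-1\big)\;+\;2\int_0^{h(x)}\big(F_{Y|X=x}(f^*(x)+t)-F_{Y|X=x}(f^*(x))\big)\,dt\enspace,
\end{equation*}
and the linear term does not vanish pointwise, because $f^*$ minimizes the risk over $\cH_K$ only, not over all measurable functions, so $f^*(x)$ need not be the conditional median. Your fallback, ``a convexity argument showing the integrand is nonnegative,'' does not address this: the integrand of the quadratic part is nonnegative trivially by monotonicity of the CDF; what requires an argument is the sign of the omitted linear term. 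The fix --- which is the actual content of the proof in \cite{chinot} and the reason Assumption~\ref{assum:convex} is needed --- is to take expectations and recognize $\bE\big[h(X)\big(2F_{Y|X}(f^*(X))-1\big)\big]$ as the directional (Gateaux) derivative of the risk at $f^*$ in the direction $f-f^*$; since $F$ is convex and $f\in F$, first-order optimality of $f^*$ makes this quantity nonnegative, after which your two steps go through (with a factor $2$ to spare, and the verification that $|t|\le(\sqrt2 C')^{(2+\varepsilon)/\varepsilon}r$ on the truncation event, so that the density bound applies, is immediate). You flagged this point as ``the main obstacle'' but did not resolve it, so as written the argument is incomplete at precisely the step that carries the proof.
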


For kernel methods, the point a) of Assumption~\ref{ass:ber_quantile} is a $L_{2+ \varepsilon} / L_2$-norm equivalence which is only required in the ball defined by the norm in the RKHS. Let $f$ in $F$ such that $\|f-f^*\|_{\cH_K} \leq \rho$ and $\|f-f^*\|_{L_2} = r$, we have
\begin{equation*}
	\|f-f^{*}\|_{L_{2+\varepsilon}}^{2+\varepsilon} = \int (f(x)-f^*(x))^{2+\varepsilon} dP_X(x) \leq  (\rho \|K\|_{\infty} )^{\varepsilon} \|f-f^*\|_{L_2}^{2}
\end{equation*}
Since $\|f-f^*\|_{L_2} = r$, it follows that
\begin{equation*}
	\|f-f^{*}\|_{L_{2+\varepsilon}} \leq  \bigg( \frac{\rho\|K\|_{\infty} }{r} \bigg)^{\varepsilon/(2+\varepsilon)}  \|f-f^*\|_{L_2}. 
\end{equation*} 
Therefore, the point a) holds with $C' =  (\rho\|K\|_{\infty}/r)^{\varepsilon/(2+\varepsilon)}$. Let us turn to the point b). From the fact that $C'= (\rho\|K\|_{\infty}/r)^{\varepsilon/(2+\varepsilon)}$, we have $\sqrt{2C'}^{(2+\varepsilon)/\varepsilon} r = 2^{(2+\varepsilon)/2\varepsilon} \rho \|K\|_{\infty}$. For example, when $Y = g(X) + W$, where $g \in \cH_K: \cX \mapsto \bR$ and $W$ is symetric and independent from $X$, it is easy to see that $f^* = g$. In this case the second point of Assumption~\ref{ass:ber_quantile} can be rewritten as $f_W(z) \geq \gamma$ for all $z \in \bR$ such that $|z| \leq 2^{(2+\varepsilon)/2\varepsilon} \rho \|K\|_{\infty}$, where $f_W$ denotes the density function of $W$. It simply means that the noise puts enough mass around $0$.


We are now in position to state our main Theorems in a RKHS associated with a bounded kernel when the absolute loss function is considered for the RERM and the minmax MOM estimators. 		
\begin{Theorem} \label{thm_rkhs_quantile}
	Let $\cX$ be some measurable space and $K: \; \cX \times \cX \mapsto \bR$ be a positive definite bounded kernel where $\cH_K$ denote its associated RKHS. Let $(\lambda_k)_{k=1}^{\infty}$ be the sequence of eigenvalues associated to $T_K$ in $L_2(\mu)$ such that $\lambda_k \leq \beta k^{-1/p}$ for all $k \in \bN^*$ and $0<p< 1$, where $\beta >0$ is an absolute constant. 
	For any $x \in \cX$, let $f_{Y|X=x}$ denote the conditional density function of $Y$ given $X=x$. Let us assume that there exists $\gamma>0$ such that, for all $x\in\cX$ and for all $z$ in $\mathbb{R}$ such that $ |z-f^{*}(x) | \leq 2\sqrt{8+\gamma} \|f^*\|_{\cH_K}\|K\|_{\infty} $, we have $ f_{Y|X=x}(z) \geq \gamma$. Let $(X_i,Y_i)_{i=1}^N$ be i.i.d random variables distributed as $(X,Y)$. Then with probability larger than 
	\begin{equation*}
		1 - \exp\bigg(  - \frac{\gamma C(4/\gamma,\beta,1,p)}{256}N^{p/(p+1)} \|f^*\|_{\cH_K}^{2p/(p+1)}\bigg)\enspace,
	\end{equation*}
	when
	\begin{equation*}
		\lambda =   C(4/\gamma,\beta,1,p) \max(1,(8+\gamma) \|K\|_{\infty} \|f^*\|_{\cH_K}) \frac{\|f^*\|_{\cH_K}^{2/(p+1)}}{N^{1/(1+p)}} \enspace,
	\end{equation*}
	the estimator $\hat{f}^{\phi}_{\lambda}$ associated to the absolute loss function defined in Equation \eqref{def_svm} satisfies
	\begin{align*}
		& \|\hat{f}^{\phi}_{\lambda} - f^* \|_{L_2}^2 \leq (4+3/(2\gamma)) C(4/\gamma,\beta,1,p) \max(1,(8+\gamma) \|K\|_{\infty} \|f^*\|_{\cH_K})\frac{\|f^*\|_{\cH_K}^{2/(p+1)}}{N^{1/(1+p)}}  \\
		&  \mbox{and} \quad \|\hat{f}^{\phi}_{\lambda} - f^*\|_{\cH_K} \leq (8+\gamma)\|f^*\|_{\cH_K}
	\end{align*}
\end{Theorem}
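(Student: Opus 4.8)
The plan is to obtain Theorem~\ref{thm_rkhs_quantile} as a specialization of the general RKHS statement Theorem~\ref{thm:erm_RKHS} to the absolute loss, so that the only genuine work is (i) checking the hypotheses of that theorem and (ii) converting its abstract ingredients $\bar A$ and $\bar r(\bar A)$ into the explicit quantities appearing in the claim. First I would discharge the structural assumptions: the absolute loss $\bar\ell(u,y)=|u-y|$ is convex and $1$-Lipschitz, so Assumption~\ref{assum:lip_conv} holds with $L=1\geq 1$; any RKHS is a convex function class, so Assumption~\ref{assum:convex} holds; and $\phi(\cdot)=\|\cdot\|_{\cH_K}^2$ satisfies Assumption~\ref{assum:phi} with $\eta=2$. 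The only nontrivial hypothesis is the local Bernstein condition Assumption~\ref{assum:fast_rates_RKHS}, which I would verify with $\bar A=4/\gamma$ (after first reducing to the case $\gamma\leq 4$, since a conditional-density lower bound with constant $\gamma$ entails the same bound with any smaller constant, and $\bar A\geq 1$ is required).

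To verify Assumption~\ref{assum:fast_rates_RKHS} I invoke Theorem~\ref{quantile_loss}: it suffices to check Assumption~\ref{ass:ber_quantile} with $\rho=2\sqrt{2+1/\bar A}\,\|f^*\|_{\cH_K}$ and with $r$ the $L_2$-radius $2L\sqrt{(2+1/\bar A)\|K\|_\infty}\,\|f^*\|_{\cH_K}\bar r(\bar A)$ that appears in that assumption. Part a) is immediate from the reproducing property: for $\|f-f^*\|_{\cH_K}\leq\rho$ one has $\|f-f^*\|_{L_{2+\varepsilon}}^{2+\varepsilon}\leq(\rho\|K\|_\infty)^{\varepsilon}\|f-f^*\|_{L_2}^2$, so a) holds with $C'=\big((\rho\|K\|_\infty)/r\big)^{\varepsilon/(2+\varepsilon)}\vee 1$ for every $\varepsilon>0$. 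For part b) one expands $(\sqrt2\,C')^{(2+\varepsilon)/\varepsilon}r$; the factor $r$ cancels the $r$ hidden in $C'$, so this quantity is a fixed multiple of $\rho\|K\|_\infty$ (up to $r$ in the degenerate regime), and for $\varepsilon$ large it is dominated by $2\sqrt{8+\gamma}\,\|f^*\|_{\cH_K}\|K\|_\infty$, using $2+1/\bar A=2+\gamma/4$ and $4(2+\gamma/4)=8+\gamma$. This is exactly the interval on which the theorem postulates $f_{Y|X=x}\geq\gamma$, so Theorem~\ref{quantile_loss} applies and gives $\|f-f^*\|_{L_2}^2\leq(4/\gamma)P\cL_f$ on the relevant sphere, i.e. Assumption~\ref{assum:fast_rates_RKHS} with $\bar A=4/\gamma$. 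Next I would make $\bar r(\bar A)$ explicit: feeding Mendelson's Rademacher-complexity estimate (quoted just before the theorem) into the definition of $\bar r(\cdot)$ yields the fixed-point inequality $\sqrt N\,\big(\sum_k(\rho^2\lambda_k\wedge r^2)\big)^{1/2}\lesssim Nr^2/(64\bar A L)$, and with $\lambda_k\leq\beta k^{-1/p}$, $\rho/r\geq1$ one has $\big(\sum_k(\rho^2\lambda_k\wedge r^2)\big)^{1/2}\lesssim\beta\rho^p r^{1-p}$; solving for $r$ with $\rho=2\sqrt{2+1/\bar A}\,\|f^*\|_{\cH_K}$ gives $\bar r^2(\bar A)=\tfrac16 C(\bar A,\beta,1,p)\,\|f^*\|_{\cH_K}^{2p/(p+1)}N^{-1/(p+1)}$ with $C$ as in the excerpt. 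Substituting $\bar A=4/\gamma$ and this value of $\bar r(\bar A)$ into Theorem~\ref{thm:erm_RKHS}, taking $\lambda=\lambda_0$, and simplifying numerical constants (via $U=2L\sqrt{(2+1/\bar A)\|K\|_\infty}$, $4+6\bar A=4+24/\gamma$, $8+4/\bar A=8+\gamma$) produces the announced bounds on $\|\hat f^{\phi}_\lambda-f^*\|_{L_2}$ and on $\|\hat f^{\phi}_\lambda-f^*\|_{\cH_K}$, and the probability $1-2\exp\!\big(-N\bar r^2(\bar A)/(64(L\bar A)^2)\big)$ turns into the stated confidence $1-\exp\!\big(-\tfrac{\gamma C(4/\gamma,\beta,1,p)}{256}N^{p/(p+1)}\|f^*\|_{\cH_K}^{2p/(p+1)}\big)$ after the same substitution.

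The step I expect to be the real obstacle is the verification of the local Bernstein condition, i.e. the second paragraph above. The delicate point is the interlocking of three radii: the $L_2$-radius at which the condition has to hold already involves $\bar r(\bar A)$; the constant $C'$ from the $L_{2+\varepsilon}/L_2$ comparison involves that same radius; and the neighbourhood of $f^*(x)$ on which the conditional density must be bounded below must come out no larger than the explicit $2\sqrt{8+\gamma}\,\|f^*\|_{\cH_K}\|K\|_\infty$ fixed in the hypothesis. The apparent circularity dissolves once one notices that in $(\sqrt2\,C')^{(2+\varepsilon)/\varepsilon}r$ the radius $r$ cancels, so the required density neighbourhood depends on $\rho\|K\|_\infty$ alone; once this is seen, everything reduces to the bookkeeping sketched above together with direct appeals to Theorems~\ref{quantile_loss} and~\ref{thm:erm_RKHS}, and the remaining work is routine constant-chasing.
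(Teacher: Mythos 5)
Your proposal is correct and follows essentially the same route as the paper: the paper itself proves Theorem~\ref{thm_rkhs_quantile} by exactly this specialization — checking Assumption~\ref{ass:ber_quantile} with $C'=(\rho\|K\|_{\infty}/r)^{\varepsilon/(2+\varepsilon)}$ so that the radius $r$ cancels in $(\sqrt{2}C')^{(2+\varepsilon)/\varepsilon}r$, invoking Theorem~\ref{quantile_loss} to get the local Bernstein condition with $\bar A=4/\gamma$, computing $\bar r(\bar A)$ from Mendelson's Rademacher bound under the eigenvalue decay $\lambda_k\leq \beta k^{-1/p}$, and plugging into Theorem~\ref{thm:erm_RKHS} at $\lambda=\lambda_0$. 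Your constant bookkeeping (and your explicit reduction ensuring $\bar A\geq 1$) is at least as careful as the paper's own, so no gap remains.
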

The error rate in Theorem~\ref{thm_rkhs_quantile} is the same as in~\cite{mendelson2010regularization}. However \textbf{our analysis do not require that the target $Y$ is bounded. It can even be heavy-tailed}. Note also that nothing is assumed on the design $X$.  
\begin{Remark}
	When $Y = f^*(X) + W$,  where $W$ is a standard Cauchy distribution, the condition $ f_{Y|X=x}(z) \geq \gamma$ for  $z$ in $\mathbb{R}$ such that $ |z-f^{*}(x) | \leq 2\sqrt{8+\gamma} \|f^*\|_{\cH_K}\|K\|_{\infty} $ is satisfied as long as there exists $\gamma \in (0,1]$ such that
	\begin{equation*}
		\frac{1}{\pi \big( 1+ 4(8+\gamma) \|f^*\|_{\cH_K}^2\|K\|_{\infty}^2 \big) } \geq \gamma
	\end{equation*}
	which holds for  $\gamma = \min(1,1/(\pi(1+36\|f^*\|^2\|K\|^2_{\infty})))$. Consequenlty the analysis holds for heavy-tailed distribution.
\end{Remark}

Let us turn to the MOM-estimators. 
\begin{Theorem} \label{thm_rkhs_quantile_mom}
	Let $\cX$ be some measurable space and $K: \; \cX \times \cX \mapsto \bR$ be a positive definite bounded kernel where $\cH_K$ denote its associated RKHS. Let $(\lambda_k)_{k=1}^{\infty}$ be the sequence of eigenvalues associated to $T_K$ in $L_2(\mu)$ such that $\lambda_k \leq \beta k^{-1/p}$ for all $k \in \bN^*$ and $0<p< 1$, where $\beta>0$ is an absolute constant. 
	For any $x \in \cX$, let $f_{Y|X=x}$ denote the conditional density function of $Y$ given $X=x$. Let us assume that there exist $\gamma>0$ such that, for all $x\in\cX$ and for all $z$ in $\mathbb{R}$ such that $ |z-f^{*}(x) | \leq 2\sqrt{8+\gamma} \|f^*\|_{\cH_K} \|K\|_{\infty} $, we have $ f_{Y|X=x}(z) \geq \gamma$. Let us assume that $(X_i,Y_i)_{i \in \cI}$ are independent and distributed as $(X,Y)$. Let $S \geq 7|\cO|/3$. Let: 
	\begin{equation*}
		C_{S,N} = \max \bigg( 6C(4/\gamma,\beta,1,p) \frac{\|f^*\|^{(2p)/(p+1)}_{\cH_K}}{N^{1/(p+1)}},\frac{13888}{\gamma} \frac{S}{N} \bigg)
	\end{equation*}
	Then with probability larger than $1 - \exp(-S/504)$ when
	\begin{equation*}
		\lambda =  \frac{C_{S,N}}{\|f^*\|_{\cH_K^2}} \enspace,
	\end{equation*}
	the estimator $\hat{f}^{\phi}_{\lambda,S}$ associated to the absolute loss function defined in Equation \eqref{def_svm_mom} satisfies
	\begin{align*}
		& \|\hat{f}^{\phi}_{\lambda,S} - f^* \|_{L_2}^2 \leq (4+3/(2\gamma)) C_{S,N} \quad \mbox{and} \quad \|\hat{f}^{\phi}_{\lambda,S} - f^*\|_{\cH_K} \leq (8+\gamma) \|f^*\|_{\cH_K}
	\end{align*}
\end{Theorem}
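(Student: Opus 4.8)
The plan is to read Theorem~\ref{thm_rkhs_quantile_mom} off the general minmax-MOM meta-theorem, Theorem~\ref{main:theo_mom}, applied with $F=\cH_K$, $\phi(\cdot)=\|\cdot\|_{\cH_K}^2$, the absolute loss $\bar\ell(u,y)=|u-y|$ (the $\tau=1/2$ quantile loss), $L=1$, and $\tilde A=4/\gamma$. Three inputs have to be supplied: the structural hypotheses of that theorem, an explicit upper bound for the complexity radius $\tilde r(\cdot)$, and the local Bernstein Assumption~\ref{assum:fast_rates_MOM}. The structural part is immediate: Assumption~\ref{assum:lip_conv} holds with $L=1$ because $u\mapsto|u-y|$ is convex and $1$-Lipschitz; Assumption~\ref{assum:convex} holds since $\cH_K$ is a vector space; Assumption~\ref{assum:phi} holds with $\eta=2$ for $\phi=\|\cdot\|_{\cH_K}^2$ (evenness and $\phi(0)=0$ are trivial, and $\|f+g\|_{\cH_K}^2\le2\|f\|_{\cH_K}^2+2\|g\|_{\cH_K}^2$); and Assumption~\ref{assum:moments} together with $S\ge7|\cO|/3$ are part of the hypotheses.

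Next I would make $\tilde r(\cdot)$ explicit. Because $\phi=\|\cdot\|_{\cH_K}^2$, the localizing set in~\eqref{comp:rad} is $\cH_K\cap(f^*+rB_{L_2})\cap\{\,\|f-f^*\|_{\cH_K}^2\le\eta(4+2/A)\|f^*\|_{\cH_K}^2\,\}$, and by the estimate of~\cite{mendelson2003performance} recalled in the excerpt its Rademacher complexity (over any subsample of size $\ge N/2$, which by Jensen is dominated by the full-sample one) is controlled by $\sqrt2\,\|K\|_\infty\big(\sum_k(\rho^2\lambda_k\wedge r^2)\big)^{1/2}$ with $\rho^2=\eta(4+2/A)\|f^*\|_{\cH_K}^2$. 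Under the polynomial decay $\lambda_k\le\beta k^{-1/p}$ and for $\rho/r\ge1$ this sum is $\lesssim(\beta\rho^p r^{1-p})^2$, so the fixed-point requirement in~\eqref{comp:rad} becomes a power equation in $r$ whose solution is $\tilde r^2(A)=C(A,\beta,1,p)\,\|f^*\|_{\cH_K}^{2p/(p+1)}N^{-1/(p+1)}$ with $C(A,\beta,1,p)$ as in the excerpt (and $\tilde r^2(A)=6\bar r^2(A)$). This makes $C_{S,r}(\tilde A)=\max\!\big(\tilde r^2(\tilde A),\,368\tilde A^2S/N\big)=C_{S,N}$ explicit, see~\eqref{def_CK}.

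For the local Bernstein Assumption~\ref{assum:fast_rates_MOM} with $\tilde A=4/\gamma$ I would invoke Theorem~\ref{quantile_loss}: for every $f\in\cH_K$ with $\|f-f^*\|_{L_2}=\sqrt{C_{S,r}(\tilde A)}$ and $\|f-f^*\|_{\cH_K}^2\le\eta(4+2/\tilde A)\|f^*\|_{\cH_K}^2=(8+\gamma)\|f^*\|_{\cH_K}^2$, one needs $\|f-f^*\|_{L_2}^2\le(4/\gamma)P\cL_f$. Theorem~\ref{quantile_loss} delivers exactly this once its two premises are checked with $r=\sqrt{C_{S,r}(\tilde A)}$ and $\rho=\sqrt{8+\gamma}\,\|f^*\|_{\cH_K}$. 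Premise a) (the $L_{2+\varepsilon}/L_2$ equivalence on the $\cH_K$-ball) follows from the reproducing property and boundedness of $K$ with $C'=(\rho\|K\|_\infty/r)^{\varepsilon/(2+\varepsilon)}$, exactly as computed in the excerpt. The key observation for premise b) is that with $\varepsilon=2$ the radius $(\sqrt2\,C')^{(2+\varepsilon)/\varepsilon}r$ at which the conditional density must be bounded below collapses to $2\rho\|K\|_\infty=2\sqrt{8+\gamma}\,\|f^*\|_{\cH_K}\|K\|_\infty$, which does \emph{not} depend on $r$ and hence does not shrink with $N$; so the hypothesis $f_{Y|X=x}(z)\ge\gamma$ on $|z-f^*(x)|\le2\sqrt{8+\gamma}\,\|f^*\|_{\cH_K}\|K\|_\infty$ is precisely premise b). I expect this to be the delicate point: premise b) must hold uniformly over the relevant $\cH_K$-ball even though the $L_2$-radius $r$ can be as small as $\sqrt{S/N}$, which a priori blows up $C'$; the cancellation of $r$ at $\varepsilon=2$ is what saves it.

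Finally I would substitute $\tilde A=4/\gamma$, $L=1$, $\phi(f^*)=\|f^*\|_{\cH_K}^2$ and $C_{S,r}(\tilde A)=C_{S,N}$ into Theorem~\ref{main:theo_mom} with $\lambda=C_{S,N}/\|f^*\|_{\cH_K}^2$ (inflating this value by an arbitrarily small amount if the strict inequality $\lambda>C_{S,r}(\tilde A)/\phi(f^*)$ is required). This yields the confidence $1-2e^{-S/504}$, absorbed into $1-e^{-S/504}$ by a harmless adjustment of the constant in the regime of interest, the regularization bound $\|\hat f^\phi_{\lambda,S}-f^*\|_{\cH_K}^2\le(8+\gamma)\|f^*\|_{\cH_K}^2$ (hence $\|\hat f^\phi_{\lambda,S}-f^*\|_{\cH_K}\le(8+\gamma)\|f^*\|_{\cH_K}$), and an $L_2$-bound of order $\sqrt{C_{S,N}}$; feeding this back through the Bernstein inequality $\|f-f^*\|_{L_2}^2\le(4/\gamma)P\cL_f$ and tracking the constants through the proof of Theorem~\ref{main:theo_mom} specialized to this setting produces the announced $\|\hat f^\phi_{\lambda,S}-f^*\|_{L_2}^2\le(4+3/(2\gamma))C_{S,N}$. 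Apart from premise b), the only real work is this constant bookkeeping: carrying the right factors from the abstract statement into the final one while keeping the density-control neighbourhood $r$-free.
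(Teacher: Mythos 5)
Your proposal follows essentially the same route as the paper: the theorem is obtained by specializing Theorem~\ref{main:theo_mom} to $F=\cH_K$, $\phi=\|\cdot\|_{\cH_K}^2$, the absolute loss with $L=1$ and $\tilde A=4/\gamma$, computing $\tilde r(\cdot)$ from the Mendelson bound on the localized Rademacher complexity under the spectral decay $\lambda_k\leq\beta k^{-1/p}$, and verifying Assumption~\ref{assum:fast_rates_MOM} via Theorem~\ref{quantile_loss} with $C'=(\rho\|K\|_\infty/r)^{\varepsilon/(2+\varepsilon)}$, where the cancellation of $r$ in the density-control radius is exactly the point the paper makes. Your treatment of the remaining details (strictness of $\lambda$, the factor $2$ in the probability, and the final constant bookkeeping) is at the same level of precision as the paper's own, so the argument is correct and not genuinely different.
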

When $S \lesssim N^{p/(p+1) } \|f^*\|^{(2p)/(p+1)}_{\cH_K}$ we recover the bounds from Theorem~\ref{thm_rkhs_quantile}. However for the minmax MOM-estimators, up to $3S/7$ outliers can contaminate the dataset without deteriorated the error rate.

\section{Conclusion}
We have presented two general results for the RERM and minmax-MOM estimators describing the statistical properties of regularization in learning theory. For those two estimators we do not assume that the regularization is a norm which is, as far as we know a new general result for Lipschitz and convex loss functions. Under the local Bernstein Assumption, we can obtain rates of convergence depending on $\phi(f^*)$.  Results for the RERM have been derived under the i.i.d and the sub-Gaussian Assumptions on the class $F-f^*$ while no concentration Assumption is required for minmax MOM-estimators. For MOM-estimators, a number of outliers smaller than \emph{square of the rate of convergence in a non-contaminated setting} $\times$ \emph{number of observations} does not deteriorate the learning procedure. We studied the particular example of SVM where no sub-Gaussian assumption on the class $F$ is required and when the target $Y$ may be heavy-tailed, widely improving the existing results in the literature. \\
There are a number of interesting directions in which this work can be extended. One relevant and closely related problem is to obtain sparsity bounds, i.e bounds depending on an underlying structure of the \emph{oracle} $f^*$ such as the sparsity or the rank of the \emph{oracle} $f^*$. It has been partially done (under a really strong Assumption) in \cite{pierre2019estimation,chinot2019robust} when the regularization function if a norm. However without this Assumption, the proofs no longer hold and a new analysis has to be developed. 

\appendix

\section{Proof of Theorems~\ref{thm_erm_conv2},~\ref{theo_lepski} RERM} \label{sec_proof}
	
	In the remaining of the proof we shall use repeatedly the following notations
	\begin{gather*}
		A = A^*,   \quad\theta = \frac{1}{2A}, \quad \delta = \frac{2}{A} + 3 \quad \gamma =  \frac{2}{A} + 2 \enspace.
	\end{gather*}
	
	\subsection{Proof Theorem~\ref{thm_erm_conv2}}
	Proof of Theorem~\ref{thm_erm_conv2} is split into two parts. First, we identify an event onto which the statistical behavior of the regularized estimator $\hat{f}_{\lambda} := \hat{f}^{\phi}_{\lambda}$ can be controled using only deterministic arguments. Then, we prove that this event holds with a probability at least as large as the one in \eqref{eq:proba}. Let us define $\rho^* = (2 + \gamma)\eta \phi(f^*)$. We first introduce this event:
	\begin{equation*}
		\Omega := \left\{ 
		\mbox{for all } f\in F\cap (f^*+ r^*B_{L_2}) \cap B_{\rho^*}^{\phi}(f^*) , \quad \big|(P-P_N)\cL_f\big|\leq \theta  (r^*)^2 \right\}
	\end{equation*} 
	where we recall that $r^* = r(A^*)$ and $B_{\rho^*}^{\phi}(f^*) =  \{ f \in F: \; \phi(f-f^*) \leq \rho^*  \} $.
	\begin{Lemma} \label{lemma_cont}
		Let $\lambda \geq (r^*)^2/\phi(f^*)$, on the event $\Omega$ we have
		\begin{itemize}
			\item For all $f \in  F \backslash \cB_{\lambda}$,	 $P_N \cL_f^{\lambda} > 2(\theta+1)\lambda \phi(f^*)$
			\item For all $f \in  F \cap \cB_{\lambda}$,	 $P_N \cL_f^{\lambda} \geq -2(\theta+1) \lambda \phi(f^*)$
		\end{itemize}
	\end{Lemma}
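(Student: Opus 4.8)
The plan is to run the homogeneity/projection argument sketched right before the lemma, carefully tracking constants. Fix $\lambda \geq (r^*)^2/\phi(f^*)$ and work on the event $\Omega$. For the first bullet, take $f \in F\setminus \cB_\lambda$. Since $\cB_\lambda$ is convex, contains $f^*$ in its interior (in the relevant sense), and $f\notin \cB_\lambda$, there is a boundary point $f_0$ of $\cB_\lambda$ and a scalar $\alpha\geq 1$ with $f - f^* = \alpha(f_0-f^*)$, i.e.\ $f_0$ is the point where the segment $[f^*,f]$ exits $\cB_\lambda$. The convexity of each $\psi_i$ (defined in \eqref{eq:fct_psi}) with $\psi_i(0)=0$ gives $\alpha\psi_i(u)\le\psi_i(\alpha u)$ for $\alpha\ge 1$, hence by \eqref{conv_arg} we get $P_N\cL_f \geq \alpha P_N\cL_{f_0}$; the same convexity/evenness of $\phi$ (Assumption~\ref{assum:phi}) gives $\phi(f)-\phi(f^*)\geq \alpha\big(\phi(f_0)-\phi(f^*)\big)$. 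Adding, $P_N\cL_f^\lambda \geq \alpha P_N\cL_{f_0}^\lambda$, so it suffices to lower bound $P_N\cL_{f_0}^\lambda$ on the boundary of $\cB_\lambda$ (then multiply by $\alpha\geq 1$, provided the bound obtained is positive).

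The core is therefore the case analysis on $\partial\cB_\lambda$, exactly the two cases 1) $\phi(f_0-f^*)=\phi(f^*)$ with $\|f_0-f^*\|_{L_2}\leq \lambda\phi(f^*)/r^*$, and 2) $\|f_0-f^*\|_{L_2}=\lambda\phi(f^*)/r^*$ with $\phi(f_0-f^*)\leq\phi(f^*)$. In both cases I first note $\|f_0-f^*\|_{L_2}\leq \lambda\phi(f^*)/r^*$ and $\phi(f_0-f^*)\leq\phi(f^*)\leq\rho^*$, so a rescaling of $f_0$ back onto $f^*+r^*B_{L_2}$ (again using $\alpha\psi_i(u)\le\psi_i(\alpha u)$, this time applied to the point on the sphere relative to $f_0$) lets me invoke the event $\Omega$, yielding $|(P-P_N)\cL_{f_0}|\le$ (a multiple of) $\theta(r^*)^2$ scaled by the ratio $\|f_0-f^*\|_{L_2}/r^*\le \lambda\phi(f^*)/(r^*)^2$. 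Thus $P_N\cL_{f_0}\ge P\cL_{f_0} - \theta\lambda\phi(f^*)$ up to constants. In case 1), the regularization term is handled via Assumption~\ref{assum:phi}: from $\phi(f_0-f^*)=\phi(f^*)$ and $\phi(f_0)\ge \eta^{-1}\phi(f_0-f^*) - \phi(-f^*)$ one extracts $\lambda(\phi(f_0)-\phi(f^*))\ge -c\lambda\phi(f^*)$ with an explicit $c$, and since $P\cL_{f_0}\ge 0$ this already closes the case once $\theta,c$ are small relative to the target constant $2(\theta+1)$. Wait — I need the bound to come out \emph{above} $2(\theta+1)\lambda\phi(f^*)$, so I must be careful: in case 1) the positivity must actually come from the penalty gain $\phi(f_0)-\phi(f^*)\ge \phi(f^*)$ (up to the $\eta$-constant), which with the choice $\theta=1/(2A)$, $\delta=2/A+3$ beats the loss from $\Omega$. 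In case 2), $P\cL_{f_0}\ge 0$ is not enough and one uses the local Bernstein Assumption~\ref{assum:fast_rates}: after rescaling to the sphere of radius $r^*$, $\phi$ of the rescaled difference is still $\le \eta(4+2/A^*)\phi(f^*)$, so $P\cL \gtrsim (r^*)^2/A^*$ there, and un-rescaling gives $P\cL_{f_0}\gtrsim \|f_0-f^*\|_{L_2}^2/A^* = \lambda^2\phi(f^*)^2/((r^*)^2 A^*)\ge \lambda\phi(f^*)/A^*$ using $\lambda\ge(r^*)^2/\phi(f^*)$; this dominates the $\Omega$-loss and the penalty loss $-\lambda\phi(f^*)$, giving $P_N\cL_{f_0}^\lambda \ge \gamma\lambda\phi(f^*)$ with $\gamma=2/A+2$, hence $>2(\theta+1)\lambda\phi(f^*)$.

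For the second bullet, $f\in F\cap\cB_\lambda$: here $\|f-f^*\|_{L_2}\le\lambda\phi(f^*)/r^*$ and $\phi(f-f^*)\le\phi(f^*)\le\rho^*$, so the \emph{same} rescaling-to-the-sphere argument plus $\Omega$ gives $P_N\cL_f\ge P\cL_f - \theta\lambda\phi(f^*)\ge -\theta\lambda\phi(f^*)$ (using $P\cL_f\ge0$), while the penalty satisfies $\lambda(\phi(f)-\phi(f^*))\ge -\lambda\phi(f^*)$ crudely (or $\ge -\lambda\phi(f-f^*)$ via a convexity/triangle-type bound), so $P_N\cL_f^\lambda\ge -(\theta+1)\lambda\phi(f^*)\ge -2(\theta+1)\lambda\phi(f^*)$. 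The main obstacle is purely bookkeeping: making sure the rescaling onto $r^*S_{L_2}$ is legitimate (the rescaled point must still lie in $F$, which uses Assumption~\ref{assum:convex}, and still satisfy the $\phi$-constraint defining $\Omega$ and Assumption~\ref{assum:fast_rates}, which is why $\rho^*=(2+\gamma)\eta\phi(f^*)$ was chosen with that slack), and then checking that with $\theta=1/(2A^*)$, $\delta=2/A^*+3$, $\gamma=2/A^*+2$ the inequalities in cases 1) and 2) genuinely clear the threshold $2(\theta+1)\lambda\phi(f^*)$. I expect no conceptual difficulty beyond this constant-chasing; the convexity lemma \eqref{conv_arg} and Assumptions~\ref{assum:phi} and~\ref{assum:fast_rates} do all the real work.
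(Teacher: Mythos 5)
Your overall architecture (projection of $f$ onto the boundary of $\cB_\lambda$, the convexity inequality $P_N\cL_f\ge\alpha P_N\cL_{f_0}$ via the $\psi_i$, Lemma~\ref{lem_conv} for the penalty, then a two-case analysis on $\partial\cB_\lambda$) is the same as the paper's, but two of your steps do not go through as stated. First, your mechanism for controlling $P_N\cL_{f_0}$ when $\|f_0-f^*\|_{L_2}>r^*$ is to ``rescale onto $f^*+r^*S_{L_2}$'' and deduce $|(P-P_N)\cL_{f_0}|\le (\|f_0-f^*\|_{L_2}/r^*)\,\theta(r^*)^2$, hence $P_N\cL_{f_0}\ge P\cL_{f_0}-\theta\lambda\phi(f^*)\ge-\theta\lambda\phi(f^*)$. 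This is not justified: convexity of the $\psi_i$ only gives the one-sided pointwise bound $\cL_{f_0}\ge\alpha_1\cL_{f_1}$, not $\cL_{f_0}=\alpha_1\cL_{f_1}$, so the deviation $|(P-P_N)\cL_{f_0}|$ cannot be obtained by rescaling the deviation at the sphere point, and $\Omega$ says nothing about functions outside $f^*+r^*B_{L_2}$. The paper's substitute is precisely the local Bernstein Assumption~\ref{assum:fast_rates}: on the sphere, $P_N\cL_{f_1}\ge P\cL_{f_1}-\theta(r^*)^2\ge(A^{-1}-\theta)(r^*)^2>0$, and this positivity survives multiplication by $\alpha_1\ge 1$. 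You invoke Bernstein only in your case 2); in your case 1) (sub-case $\|f_0-f^*\|_{L_2}>r^*$) and in your second bullet you rely on ``$P\cL\ge 0$ plus scaled concentration'', which is the gap. Relatedly, un-rescaling Bernstein yields $P\cL_{f_0}\gtrsim\|f_0-f^*\|_{L_2}\,r^*/A^*$, linear in $\|f_0-f^*\|_{L_2}$, not $\|f_0-f^*\|_{L_2}^2/A^*$ as you write: the Bernstein inequality is not available at $f_0$ itself, only on the $r^*$-sphere.

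Second, the radii. You take $\cB_\lambda$ with $\phi$-radius $\phi(f^*)$ and $L_2$-radius $\lambda\phi(f^*)/r^*$ (the Section~\ref{sec_erm} sketch), but with these radii the two boundary cases do not clear the threshold $2(\theta+1)\lambda\phi(f^*)$. In case 1), Assumption~\ref{assum:phi} only yields $\phi(f_0)\ge\phi(f_0-f^*)/\eta-\phi(f^*)$, so with $\phi(f_0-f^*)=\phi(f^*)$ the ``penalty gain'' is at best $(1/\eta-2)\phi(f^*)\le 0$ (note $\eta\ge 1$); you need the boundary at $\rho^*=(2+\gamma)\eta\phi(f^*)$ to get $\phi(f_0)-\phi(f^*)\ge\gamma\phi(f^*)$. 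In case 2), the Bernstein contribution at radius $\lambda\phi(f^*)/r^*$ is $(A^{-1}-\theta)\lambda\phi(f^*)$, and after subtracting the crude penalty loss $\lambda\phi(f^*)$ you are left with $(A^{-1}-\theta-1)\lambda\phi(f^*)$, which is below (typically negative) the target $2(\theta+1)\lambda\phi(f^*)$; this is exactly why the lemma's $\cB_\lambda$, as used in Proposition~\ref{prop:algebra2}, has $L_2$-radius $\lambda\delta\phi(f^*)/((A^{-1}-\theta)r^*)$ with $\delta=2/A^*+3$, giving $(\delta-1)\lambda\phi(f^*)=\gamma\lambda\phi(f^*)>2(\theta+1)\lambda\phi(f^*)$. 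So the missing pieces are not mere constant-chasing on your radii: the enlargements by $\eta(2+\gamma)$ and $\delta/(A^{-1}-\theta)$, together with invoking Bernstein in every sub-case where $\|f_0-f^*\|_{L_2}>r^*$, are what make the statement true.
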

	\begin{Proposition}\label{prop:algebra2}Let $\lambda \geq \lambda_0 := (r^*)^2/ \phi(f^*)$, on the event $\Omega$, one has 
		\begin{align*}
			\phi(\hat{f}_{\lambda}-f^{*}) \leq \rho^*,\quad
			\|\hat{f}_{\lambda}-f^{*}\|_{L_2} &\leq \lambda \frac{\delta \phi(f^*)}{(A^{-1}-\theta)r^*} 
		\end{align*}
	\end{Proposition}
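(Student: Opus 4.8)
The plan is to read Proposition~\ref{prop:algebra2} off Lemma~\ref{lemma_cont} using only the variational definition of $\hat{f}_{\lambda} := \hat{f}^{\phi}_{\lambda}$. Recall that $\cL_f^{\lambda} = \cL_f + \lambda\big(\phi(f) - \phi(f^*)\big)$ with $\cL_f = \ell_f - \ell_{f^*}$, so that $P_N\cL_{f^*}^{\lambda} = 0$. First I would record that, because $\hat{f}_{\lambda}$ minimizes $f \mapsto P_N\ell_f + \lambda\phi(f)$ over $F$ and $f^* \in F$, comparing the values at $\hat{f}_{\lambda}$ and at $f^*$ gives $P_N\ell_{\hat{f}_{\lambda}} + \lambda\phi(\hat{f}_{\lambda}) \le P_N\ell_{f^*} + \lambda\phi(f^*)$, i.e.
\[
P_N\cL_{\hat{f}_{\lambda}}^{\lambda} \le 0 .
\]
We may and do assume $\phi(f^*) > 0$, since otherwise $\lambda_0 = (r^*)^2/\phi(f^*) = +\infty$, there is no admissible $\lambda$, and the claim is vacuous.

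Next, fix $\lambda \ge \lambda_0$ and work on the event $\Omega$. Since $\theta = 1/(2A^*) > 0$ and $\lambda > 0$, the threshold $2(\theta+1)\lambda\phi(f^*)$ is strictly positive, so the first item of Lemma~\ref{lemma_cont} gives $P_N\cL_f^{\lambda} > 0$ for every $f \in F \setminus \cB_{\lambda}$. Together with the display above this forces $\hat{f}_{\lambda} \in \cB_{\lambda}$, i.e.
\[
\|\hat{f}_{\lambda} - f^*\|_{L_2} \le \frac{\delta}{A^{-1}-\theta}\,\frac{\lambda\phi(f^*)}{r^*} \qquad\text{and}\qquad \phi(\hat{f}_{\lambda} - f^*) \le \rho^* = (2+\gamma)\eta\,\phi(f^*),
\]
which is exactly the Proposition. (Should the formal $\cB_{\lambda}$ be the smaller ball $\{f : \|f-f^*\|_{L_2} \le \lambda\phi(f^*)/r^*,\ \phi(f-f^*) \le \phi(f^*)\}$ from the sketch of proof, the conclusion still follows after the trivial numerical comparisons $\delta/(A^{-1}-\theta) = (2/A^*+3)\cdot 2A^* = 4 + 6A^* \ge 1$ and $(2+\gamma)\eta \ge \eta \ge 1$, the last inequality coming from Assumption~\ref{assum:phi} applied with $g = 0$ and $\phi(f^*) > 0$.)

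So the Proposition is immediate once Lemma~\ref{lemma_cont} is granted, and carries no real difficulty beyond the degenerate case and elementary constant bookkeeping. The substance lies in Lemma~\ref{lemma_cont}: its proof implements the homogeneity (``projection trick'') argument sketched after Theorem~\ref{thm_erm_conv2}, namely that every $f \in F \setminus \cB_{\lambda}$ satisfies $P_N\cL_f^{\lambda} \ge \alpha\,P_N\cL_{f_0}^{\lambda}$ for some $\alpha \ge 1$ and some $f_0$ on the frontier of $\cB_{\lambda}$; a further rescaling then brings the point of interest into $F \cap (f^* + r^*B_{L_2}) \cap B_{\rho^*}^{\phi}(f^*)$, where the event $\Omega$ supplies the bound $|(P-P_N)\cL_f| \le \theta(r^*)^2$; and the frontier splits into case~1, $\phi(f_0 - f^*) = \phi(f^*)$, treated with the quasi-triangle inequality of Assumption~\ref{assum:phi}, and case~2, $\|f_0 - f^*\|_{L_2} = \lambda\phi(f^*)/r^*$, treated with the local Bernstein Assumption~\ref{assum:fast_rates}, in each of which one obtains $P_N\cL_{f_0}^{\lambda} > 0$.
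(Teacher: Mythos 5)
Your proposal is correct and matches the paper's own argument: the paper likewise observes that $P_N\cL_{\hat{f}_{\lambda}}^{\lambda}\leq 0$ by comparing the minimizer with $f^*$, and then deduces $\hat{f}_{\lambda}\in\cB_{\lambda}$ directly from Lemma~\ref{lemma_cont} (whose $\cB_{\lambda}$ in the appendix is indeed the larger ball with radii $\lambda\delta\phi(f^*)/((A^{-1}-\theta)r^*)$ and $\rho^*$, so your parenthetical fallback is not even needed). The extra remarks on $\phi(f^*)>0$ and the constant comparisons are harmless bookkeeping consistent with the paper.
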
 
	
	\begin{proof}
		Let $\lambda \geq \lambda_0$, we denote $\mathcal B_{\lambda} =  \bigg(f^*+\big( \lambda \delta \phi(f^*) /((A^{-1}-\theta)r^*) \big)B_{L_2} \bigg) \cap B_{\rho^*}^{\phi}(f^*) $. We want to prove that $\hat f_{\lambda} \in \mathcal B_{\lambda}$.
		We recall that the regularized empirical excess loss function is defined for all $f\in F$ by
		\begin{equation*}
			P_N\cL_f^\lambda = P_N\cL_f + \lambda \big(\phi(f) - \phi(f^*)\big).
		\end{equation*}
		Since $\hat f_{\lambda}$ is such that $P_N\cL_{\hat f_{\lambda}}^\lambda\leq 0$, it is enough to prove that $P_N\cL_f^\lambda>0$ for all $f\in F \backslash \mathcal B_{\lambda}$ to get that $\hat f_{\lambda}\in  \mathcal B_{\lambda}$. In fact, for the adaptive procedure it will be necessary to use the results from Lemma~\ref{lemma_cont} which is equivalent (up to the choice of the constants) to show than $P_N\cL_f^\lambda>0$ for all $f\in F \backslash\mathcal B_{\lambda}$. From Lemma~\ref{lemma_cont} it follows immediately that $	\phi(\hat{f}_{\lambda}-f^{*}) \leq \rho^*$ and 
		$ \|\hat{f}_{\lambda}-f^{*}\|_{L_2} \leq \lambda \frac{\delta\phi(f^*)}{(A^{-1}-\theta)r^*} $
		
	\end{proof}
	
	\begin{proof}{\textbf{Lemma~\ref{lemma_cont}}}
		
		The proof follows from an homogeneity argument saying that if $P_N\cL_{f_0}^{\lambda}> 2(\theta +1 ) \lambda \phi(f^*) $ on the border of $ \mathcal B_{\lambda}$ then we also have $P_N\cL_f ^{\lambda}>  2(\theta +1 ) \lambda \phi(f^*)$ for all $f\in F$ outside $\mathcal B_{\lambda}$. Inside $ \mathcal B_{\lambda}$ the arguments are similar. \\
		Let $f$ in $F$ be outside of $\mathcal B_{\lambda}$. By convexity of $F$, there exists $f_0\in F$ and $\alpha>1$ such that $f-f^* = \alpha(f_0-f^*)$ and $f_0\in\partial \mathcal B_{\lambda}$ where we denote by $\partial \mathcal B_{\lambda}$ the border of $\mathcal B_{\lambda}$. By definition, we either have: 1) $\phi(f_0-f^*)= \rho^*$ and $\norm{f_0-f^*}_{L_2}\leq (\lambda \delta\phi(f^*))/((A^{-1}-\theta)r^*)$ in that case, $\alpha$ is such that  $ 1 \leq  \alpha \leq  \phi(f-f^*)/ \rho^* $ (see Lemma~\ref{lem_homo} in Section~\ref{app_supp}) or 2) $\norm{f_0-f^*}_{L_2}= (\lambda \delta \phi(f^*))/((A^{-1}-\theta)r^*) $ and $\phi(f_0-f^*) \leq \rho^*$ and, in that case, $\alpha = \norm{f-f^*}_{L_2}/\big( (\lambda \delta\phi(f^*))/((A^{-1}-\theta)r^*) \big)$. We will treat the two cases independently.\\ 
		
		Let us first explain the role of the convexity of the loss function by writing down an homogeneity argument linking the empirical excess risk of $f$ to the one of $f_0$. For all $i \in \{1,\cdots,N \}$, let $\psi_i: \mathbb R \rightarrow \mathbb R $ be defined for all $u\in \R$ by 
		\begin{equation}\label{eq:fct_psi}
		\psi_i(u) = \bar{\ell} (u + f^{*}(X_i), Y_i) - \bar{\ell} (f^{*}(X_i), Y_i).
		\end{equation}
		The functions $\psi_i$ are such that $\psi_i(0) = 0$, they are convex because $\bar{\ell}$ is, in particular $\alpha\psi_i(u) \leq \psi_i(\alpha u)$ for all $u\in\mathbb R$ and $\alpha \geq 1$ and $\psi_i(f(X_i) - f^{*}(X_i) )=  \bar{\ell} (f(X_i), Y_i) - \bar{\ell} (f^{*}(X_i), Y_i) $ so that the following holds:
		\begin{align} \label{conv_arg}
			\nonumber P_N \cL_f & = \frac{1}{N} \sum_{i=1}^{N}  \psi_i \big( f(X_i)- f^{*}(X_i) \big)= \frac{1}{N} \sum_{i=1}^{N}   \psi_i(\alpha( f_0(X_i)- f^{*}(X_i) ))\\
			&\geq \frac{\alpha}{N} \sum_{i=1}^{N}   \psi_i(( f_0(X_i)- f^{*}(X_i))) = \alpha P_N \cL_{f_0}.
		\end{align}
		For the  regularization part the same homogeneity arguments holds. 	
		\begin{align*}
			\phi(f) - \phi(f^{*})  = \phi \big(f^* + \alpha(f_0 - f^*) \big) - \phi(f^{*}) \geq \alpha \big( \phi(f_0) - \phi(f^*)  \big)
		\end{align*}
		where we used Lemma~\ref{lem_conv} (see Section~\ref{app_supp}). Therefore 
		\begin{equation*}
			P_N \cL_f^{\lambda} \geq \alpha 	P_N \cL_{f_0}^{\lambda} 
		\end{equation*}
		Let us now place ourselves on the event $\Omega$ up to the end of the proof and let $f_0\in F \cap \partial \mathcal B_{\lambda}$. We explore two cases depending on the localization of $f_0$ on the border of $\mathcal B_{\lambda}$: 1) $\phi(f_0-f^*)= \rho^*$ and $\norm{f_0-f^*}_{L_2}\leq (\lambda \delta \phi(f^*))/((A^{-1}-\theta)r^*)$ which is the case where the regularization part helps to show that  $P_N\cL_{f_0}^\lambda>  2(\theta +1 ) \lambda \phi(f^*)$ or 2) $\norm{f_0-f^*}_{L_2}= (\lambda \delta \phi(f^*))/((A^{-1}-\theta)r^*)$ and $\phi(f_0-f^*)\leq \rho^*$ which is where the Bernstein's condition helps. 
		\begin{figure}[h]
\centering
\begin{tikzpicture}[scale=0.5]
\draw (0,0) circle (8cm);
\draw (0,0) circle (7.3cm);
\draw (1.5,0) node {$f^*$};
\filldraw (0,0) circle (0.2cm);
\draw (-10,0) -- (0,10) -- (10,0) -- (0,-10) -- (-10,0);
\draw (12,0) node (A) {$ B_{\rho^{*}}^{\phi} (f^*) $};
\draw (0,7) node (B) {$r^* B_{L_2}$};
\draw (11,6) node (C) {$\big( (\lambda \delta \phi(f^*))/((A^{-1}-\theta)r^*)  \big) B_{L_2}$};
\draw (0,-2) node {$\mathbf{\boldsymbol{B_{\rho^*}^{\phi} (f^*)} \cap (f^*+ (\lambda \delta \phi(f^*)/((A^{-1}-\theta)r^*) B_{L_2})}$};
\draw (-10,8) -- (0,0);
\draw (-11,8) node {$f$};
\filldraw (-10,8) circle (0.2cm);
\draw (-5.5,3.5) node {$f_0$};
\filldraw (-5.6,4.5) circle (0.2cm);

\draw[ultra thick] (-7.7,2.3) -- (-2.3,7.7) arc (110:70:6.8) -- (7.7,2.3) arc (20:-20:6.8) -- (2.3,-7.7) arc (-70:-110:6.8) -- (-7.7, -2.3) arc (-160:-200:6.8);

\end{tikzpicture}
\caption{An homogeneity argument for Lipshitz loss functions: $P_N\cL_f^\lambda>0$ when $P_N\cL_{f_0}^\lambda>0$.}
\label{fig:homogen}
\end{figure}
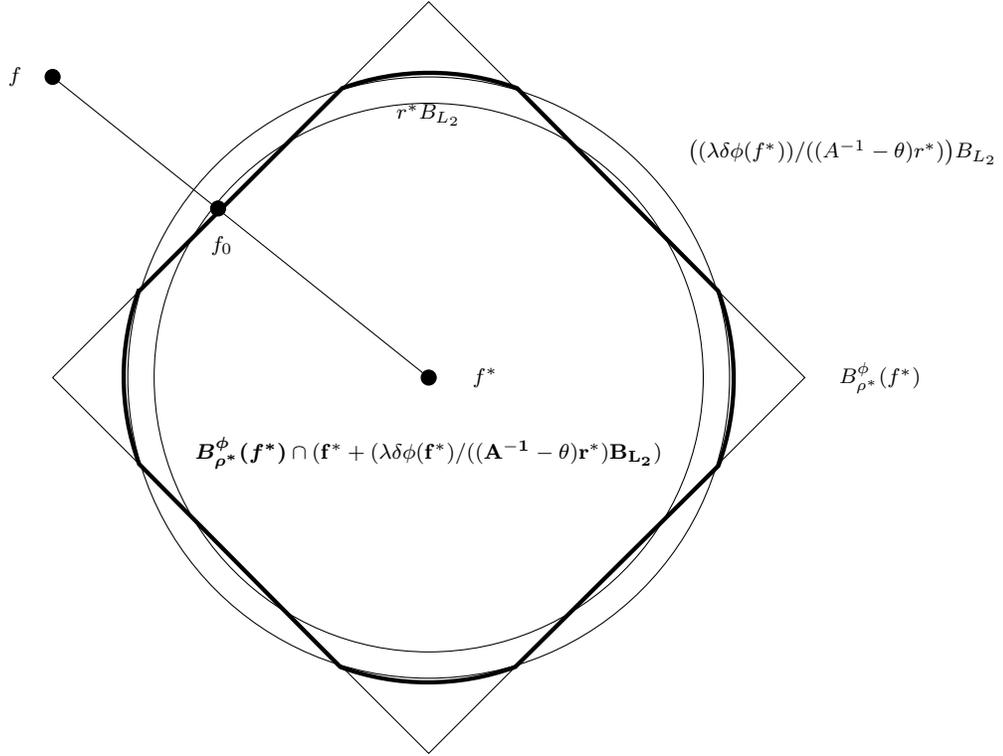
		We consider the first case which is when $\phi(f_0-f^*)= \rho^*$ and $\norm{f_0-f^*}_{L_2}\leq  (\lambda \delta \phi(f^*))/((A^{-1}-\theta)r^*)$. There are two cases, either $\norm{f_0-f^*}_{L_2}\leq r^*$ or $\norm{f_0-f^*}_{L_2}\geq r^*$. In both cases, from the fact that $\phi(f_0-f^*) \leq \eta \big( \phi(f_0) + \phi(f^*)  \big)$ we have $\phi(f_0) - \phi(f^*) \geq \gamma \phi(f^*)$. If $\norm{f_0-f^*}_{L_2}\leq r^*$, on $\Omega$ we have $	|(P-P_N)\cL_{f_0}|\leq \theta (r^*)^2$ and we get
		\begin{align*}
			P_N \cL_{f}^\lambda = P_N \cL_{f}+ \lambda\left(\phi(f) - \phi(f^*)\right)& \geq \alpha \big(P_N\cL_{f_0} + \lambda \gamma \phi(f^*)\ \big)  \geq \alpha \big(-\theta (r^*)^2 +  \gamma \lambda \phi(f^*)\ \big) \\
			& \geq (-\theta + \gamma) \lambda \phi(f^*) > 2(\theta + 1 ) \lambda \phi(f^*)
		\end{align*}
		where we used the facts that $\lambda \geq (r^*)^2/  \phi(f^*) $ and $P\cL_{f_0} \geq 0$ . If $r^* \leq  \norm{f_0-f^*}_{L_2} \leq \lambda \delta  \phi(f^*) / ((A^{-1}-\theta)r^*)$
		we use the same projection trick. Let $\alpha_1 = \|f_0-f^*\|_{L_2} / r^*$ and set $f_1$ in $F$ be such that $f_0 - f^* =  \alpha_1 (f_1- f^*)$. We have $\|f_1-f^*\|_{L_2} = r^*$ and $\phi(f_1-f^*) \leq \rho^* $. Therefore on $\Omega$ we have 
		\begin{equation*}
			P_N \cL_{f}^\lambda  \geq \alpha \big(P_N\cL_{f_0} + \gamma \lambda \phi(f^*)\ \big) \geq \alpha \big( \alpha_1 P_N \cL_{f_1} + \gamma \lambda \phi(f^*)\ \big) \geq   \gamma \lambda \phi(f^*) > 2 (\theta + 1 ) \lambda \phi(f^*)
		\end{equation*}
		Since,  on $\Omega$, $P_N \cL_{f_1} \geq P\cL_{f_1} - \theta (r^{*})^2 \geq A^{-1} \|f_1-f^*\|_{L_2} - \theta (r^{*})^2= (A^{-1}  - \theta) (r^{*})^2 > 0$ where we used Assumption~\ref{assum:fast_rates}. \\

		We now turn to the second case where $\norm{f_0-f^*}_{L_2} = \lambda \delta \phi(f^*) /((A^{-1}-\theta)r^*)$ and $ \phi(f_0-f^*) \leq \rho^*$. Remember that in this case $\alpha = \norm{f-f^*}_{L_2}/\big( (\lambda \delta\phi(f^*))/((A^{-1}-\theta)r^*) \big)$. The regularization part no longer helps. However, by the Bernstein Assumption~\ref{assum:fast_rates} and using the same projection trick we get 
		\begin{align*}
			P_N \cL_{f} \geq \frac{\|f-f^{*}\|_{L_2}}{(\lambda \delta \phi(f^*))/((A^{-1}-\theta)r^*)}  P_N \cL_{f_0}& \geq \frac{\|f-f^{*}\|_{L_2}}{(\lambda \delta \phi(f^*))/((A^{-1}-\theta)r^*)} \frac{\|f_0-f^{*}\|_{L_2}}{r^*} P_N\cL_{f_1} \\
			& \geq \frac{\|f-f^{*}\|_{L_2}}{r^*} (A^{-1}-\theta )  (r^*)^2 
		\end{align*}where $f_1$ is such that $f_0 - f^* = \big( \|f_0-f^{*}\|_{L_2}/(r^*)  \big)(f_1- f^*)$. We have $\|f_1-f^*\|_{L_2} = r^*$ and $\phi(f_1-f^*) \leq \rho^* $. Since $\norm{f-f^*}_{L_2} \geq \lambda \delta \phi(f^*) /((A^{-1}-\theta)r^*)$, we finally get  
		\begin{align*}
			&P_N \cL_{f}^{\lambda }  \geq  \frac{\|f-f^{*}\|_{L_2}}{r^*} (A^{-1}-\theta) (r^*)^2 - \lambda \phi(f^*) \geq (\delta -1) \lambda \phi(f^*) > 2(\theta + 1 ) \lambda \phi(f^*)
		\end{align*}
		We conclude the proof by studying $P_N \cL_f^{\lambda}$ for $f \in F \cap \cB_{\lambda}$. One more time there are two cases, either $\|f-f^* \|_{L_2} \leq r^*$ or $\|f-f^* \|_{L_2} \geq r^*$. In the first case, since $P\cL_{f_0}$, on $\Omega$ we get that 
		\begin{equation*}
			P_N \cL_f^{\lambda} \geq -\theta (r^*)^2 - \lambda \phi(f^*) \geq -(\theta +1)\lambda \phi(f^*)
		\end{equation*} 
		For $\|f-f^* \|_{L_2} \geq r^*$ using the projection trick, there exists $\alpha \geq 1$ such that $P_N \cL_f \geq \alpha P_N\cL_{f_0}$
		where $f_0$ satisfies $\|f_0-f^*\|_{L_2} = r^*$ and $\phi(f_0-f^*) \leq \rho^*$. Therefore on $\Omega $, using Assumption~\ref{assum:fast_rates}, we get $P_N \cL_f \geq \alpha (A^{-1}-\theta) (r^*)^2 \geq -\theta \lambda \phi(f^*)$. Finally in that case
		\begin{equation*}
			P_N \cL_f^{\lambda} \geq -(\theta +1)\lambda \phi(f^*)
		\end{equation*} 
	\end{proof}
	
	Next, we prove that $\Omega$ holds with large probability. To that end, we use the results from \cite{pierre2019estimation}. 
	\begin{Lemma} \cite{pierre2019estimation} \label{lem:subgauss}
		Assume that Assumption~\ref{assum:lip_conv} and Assumption~\ref{ass:sub-gauss} hold. Let $F^\prime\subset F$ then for every $u>0$, with probability at least $1-2\exp(-u^2)$
		\begin{equation*}
			\sup_{f,g\in F^\prime}\left|(P-P_N)(\cL_f-\cL_g)\right|\leq \frac{16LB}{\sqrt{N}} \left(w(F^\prime) +   u d_{L_2}(F^\prime)\right)
		\end{equation*} where $d_{L_2}$ is the $L_2$ metric, $d_{L_2}(F^\prime)$ is the $L_2$ diameter of $F^\prime$.
	\end{Lemma}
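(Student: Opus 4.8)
The plan is to recognize the map $f\mapsto (P-P_N)\cL_f$ as a centered empirical process with sub-Gaussian increments indexed by $F'$, and to apply a generic-chaining tail inequality (this is the result imported from \cite{pierre2019estimation}). First I would reduce to a single process: since $\cL_f=\ell_f-\ell_{f^*}$, the excess-risk term cancels, $\cL_f-\cL_g=\ell_f-\ell_g$, so writing $Z_f:=(P-P_N)\cL_f$ (which is centered, with $Z_{f^*}=0$) one has $Z_f-Z_g=-\tfrac1N\sum_{i=1}^N\bigl[(\ell_f-\ell_g)(X_i,Y_i)-P(\ell_f-\ell_g)\bigr]$, an average of $N$ i.i.d.\ centered terms, and the goal is a high-probability bound on $\sup_{f,g\in F'}|Z_f-Z_g|$.

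Next I would establish that $(Z_f)_{f\in F'}$ has sub-Gaussian increments with respect to a rescaled $L_2$ metric. By Assumption~\ref{assum:lip_conv}, $|(\ell_f-\ell_g)(X,Y)|\le L|(f-g)(X)|$ pointwise; by Assumption~\ref{ass:sub-gauss} — using the convexity of $F$ from Assumption~\ref{assum:convex} to control differences $f-g$ of elements of $F'$ rather than merely $f-f^*$ — the variable $(f-g)(X)$ is sub-Gaussian with $\psi_2$-norm at most $cB\|f-g\|_{L_2}$. Hence each centered summand has $\psi_2$-norm at most $c'LB\|f-g\|_{L_2}$, and by the standard sub-Gaussian tail for sums of independent centered sub-Gaussian variables, $\|Z_f-Z_g\|_{\psi_2}\le c''\,\tfrac{LB}{\sqrt N}\|f-g\|_{L_2}$. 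So $(Z_f)_{f\in F'}$ is a sub-Gaussian process on $(F',d)$ with $d(f,g)=c''\tfrac{LB}{\sqrt N}\|f-g\|_{L_2}$.

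Then I would invoke the deviation inequality for processes with sub-Gaussian increments (Talagrand's generic-chaining tail bound): for an absolute constant and every $u>0$, with probability at least $1-2e^{-u^2}$, $\sup_{f,g\in F'}|Z_f-Z_g|\lesssim \gamma_2(F',d)+u\,\mathrm{diam}_d(F')$. Here $\mathrm{diam}_d(F')=c''\tfrac{LB}{\sqrt N}d_{L_2}(F')$; and by Talagrand's majorizing-measure theorem $\gamma_2(F',d)$ is comparable to the expected supremum of the canonical Gaussian process attached to $d$, which by homogeneity equals $c''\tfrac{LB}{\sqrt N}\,\E\sup_{f\in F'}G_f=c''\tfrac{LB}{\sqrt N}\,w(F')$ in the sense of Definition~\ref{def:gauss_mean_width}. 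Substituting these and keeping careful track of all numerical constants (through the increment estimate and the chaining bound) yields the stated inequality with the absolute constant $16$. An alternative route avoiding chaining is symmetrization, the Ledoux--Talagrand contraction principle (legitimate because $u\mapsto\bar\ell(u+f^*(X_i),Y_i)-\bar\ell(f^*(X_i),Y_i)$ is $L$-Lipschitz and vanishes at $0$), comparison of Rademacher and Gaussian averages, and Borell--TIS concentration for the resulting empirical Gaussian width, but that forces one to intersect several high-probability events (notably an ``empirical $\approx$ population $L_2$'' event) and makes the clean constant harder to reach.

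The main obstacle is the bookkeeping that produces exactly $\tfrac{16LB}{\sqrt N}\bigl(w(F')+u\,d_{L_2}(F')\bigr)$ with probability precisely $1-2e^{-u^2}$: in particular a lossless passage from ``$F-f^*$ is $B$-sub-Gaussian'' (a one-point normalization) to a uniform sub-Gaussian increment bound for the loss process — which is exactly where convexity of $F$ must be used to handle the differences $f-g$ — together with a normalization in the chaining theorem that introduces no spurious logarithmic or numerical factors.
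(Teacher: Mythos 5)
The paper itself does not prove Lemma~\ref{lem:subgauss}; it is imported from \cite{pierre2019estimation}, so there is no in-paper argument to compare with. Your route --- cancel $f^*$ to get $\cL_f-\cL_g=\ell_f-\ell_g$, bound the increments pointwise by $L|(f-g)(X)|$ via Assumption~\ref{assum:lip_conv}, deduce that the empirical process has sub-Gaussian increments in the metric $\tfrac{LB}{\sqrt N}\|f-g\|_{L_2}$, and conclude with the generic-chaining tail bound plus the majorizing-measure theorem to replace $\gamma_2$ by $w(F')$ --- is the standard proof and, up to constants, the argument of the cited reference.

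There is, however, one genuine gap: the passage from Assumption~\ref{ass:sub-gauss} to the two-point increment bound $\|(f-g)(X)\|_{\psi_2}\le c\,B\|f-g\|_{L_2}$, which you attribute to the convexity of $F$. Convexity does not deliver this. Assumption~\ref{ass:sub-gauss} normalizes each $f-f^*$ by its \emph{own} $L_2$-norm, and the triangle inequality only gives $\|(f-g)(X)\|_{\psi_2}\lesssim B\big(\|f-f^*\|_{L_2}+\|g-f^*\|_{L_2}\big)$, which may be arbitrarily larger than $B\|f-g\|_{L_2}$. A concrete obstruction: take $h_2=Z$ bounded with $\|Z\|_{L_2}=1$ and $h_1=Z+\eps W$ with $W$ orthogonal to $Z$, individually sub-Gaussian but with $\|W\|_{\psi_2}/\|W\|_{L_2}$ as large as you wish, and $\eps$ small; every element of the convex hull of $\{f^*,\,f^*+h_1,\,f^*+h_2\}$ shifted by $-f^*$ satisfies the one-point sub-Gaussian normalization with an absolute constant, yet the increment $h_1-h_2=\eps W$ violates the increment bound by an arbitrary factor. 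So the chaining step cannot be run from the assumption as literally stated plus convexity; what is needed (and what is effectively used in \cite{pierre2019estimation}, and what does hold in the applications of the lemma in this paper, where the relevant classes sit inside a sub-Gaussian linear span, e.g.\ linear functionals of a sub-Gaussian design) is to read the sub-Gaussian hypothesis as an increment condition on $F'-F'$, equivalently on the span of $F-f^*$. With that reading the rest of your argument is sound; the exact constant $16$ and the probability $1-2e^{-u^2}$ then come from the particular formulation of the chaining tail bound invoked in the reference, which your sketch leaves as bookkeeping --- acceptable, but not established by your argument.
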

	It follows from Lemma~\ref{lem:subgauss} that for any $u>0$, with probability larger that $1-2\exp(-u^2)$,
	\begin{align*}
		&\sup_{f \in F \cap  (f^{*} + r^* B_{L_2})  \cap B_{\rho^*}^{\phi}(f^*) } \big | (P-P_N)\cL_f  \big|  \leq \sup_{f,g \in  F \cap  (f^{*} + r^* B_{L_2})  \cap B_{\rho^*}^{\phi}(f^*) } \big | (P-P_N)(\cL_f-\cL_g)  \big| \\
		& \leq \frac{16LB}{\sqrt{N}} \bigg(   w \big(F \cap  (f^{*} + r^* B_{L_2})  \cap B_{\rho^*}^{\phi}(f^*) \big) + ud_{L_2} \big(F \cap  (f^{*} + r^* B_{L_2})  \cap B_{\rho^*}^{\phi}(f^*) \big)   \bigg) \enspace.
	\end{align*}
	We have $d_{L_2} \big(F \cap  (f^{*} + r^* B_{L_2})  \cap B_{\rho^*}^{\phi}(f^*) \big) \leq r^*$ and $w \big(F \cap  (f^{*} + r^* B_{L_2})  \cap B_{\rho^*}^{\phi}(f^*) \big)  = w\big(F \cap  r^* B_{L_2} \cap B_{\rho^*}^{\phi}(0) \big)$, By definition of the complexity parameter (see Equation~\eqref{def:function_r}), for $u = \theta \sqrt{N} r^*/(32LB) $, with probability at least 
	\begin{equation}
	1-2\exp\big(-\theta^2N (r^*)^2 /(32^2 L^2B^2 ) \big)
	\end{equation} 
	for every $f$ in $F\cap(f^*+ r^*B_{L_2})  \cap B_{\rho^*}^{\phi} (f^*)$,
	\begin{equation}
	\big | (P-P_N) \cL_f \big|  \leq \theta  (r^*)^2
	\end{equation}
	
	\subsection{Proof Theorem~\ref{theo_lepski}} \label{sec_proof_lepski}
	In this section we work on the event
	\begin{equation*}
		\tilde{\Omega}:= \left\{ 
		\mbox{for all } f\in F\cap \bigg(f^*+  \frac{2\delta}{A^{-1}-\theta} r^*B_{L_2}\bigg) \cap B_{\rho^*}^{\phi}(f^*) , \quad \big|(P-P_N)\cL_f\big|\leq \theta  (r^*)^2 \right\}
	\end{equation*} 
	Using the same proof as the one for $\Omega$, it easy to show that $\tilde{\Omega}$ holds with probability larger than
	\begin{equation*}
		1-2\exp\bigg( - \frac{ \big( \theta (A^{-1}-\theta) \big) ^2 N (r^*)^2}{ (64 LB\delta)^2} \bigg)
	\end{equation*}
	Note that $\Omega \subset \tilde{\Omega}$ and then Lemma~\ref{lemma_cont} still holds. \\
	Let us assume that $(\lambda_j)_{j=0}^J = (r_j^2/\phi_j)_{j=0}^J$ is non increasing. From the choice of  $(\phi_j)_{j=0}^J$, there exists $\tilde{k}$ such that $\phi_{\tilde{k}} \leq \phi(f^*)\leq 2\phi_{\tilde{k}}$. Note that if $(\lambda_j)_{j=0}^J$ is non decreasing, it is enough to use the same proof with $\tilde{k}$ such that $(1/2)\phi_{\tilde{k}} \leq \phi(f^*)\leq \phi_{\tilde{k}}$. \\
	Moreover, from Lemma~\ref{lemma_cont}, for all $\lambda \geq \lambda_0$, $T_\lambda(f^*) = - P_N \cL_{\hat{f}_{\lambda}}^{\lambda} \leq (\theta +1) \lambda \phi(f^*) \leq 2(\theta+1) \lambda \phi_{\tilde{k}}$. Since $ \phi_{\tilde{k}} \leq \phi(f^*)$ it follows that $\lambda_{\tilde{k}} \geq \lambda_0 $. And finally 
	\begin{equation} \label{exc_loss}
	P_N\cL_{\hat{f}_{\lambda}}^{\lambda} \leq 2(\theta+1) \phi_{\tilde k} \lambda_{\tilde{k}} \leq  2(\theta+1) \phi_{k} \lambda_{k} \mbox{   for all  } k \geq \tilde{k}
	\end{equation}
	From the definition of $k^*$ and Equation~\eqref{exc_loss} it follows that $k^* \leq \tilde k$ and thus, $\tilde{f} \in \hat{R}_{\tilde{k}}$. As a consequence, $P_N \cL_{\tilde{f}}^{\lambda_{\tilde{k}}} \leq T_{\lambda_{\tilde{k}}}(\tilde f)$ and we get 
	\begin{equation*}
		P_N \cL_{\tilde{f}}^{\lambda_{\tilde{k}}} \leq 2(\theta + 1) \lambda_{\tilde{k}} \phi_{\tilde{k}} \leq 2(\theta + 1) \lambda_{\tilde{k}} \phi(f^*)
	\end{equation*}
	From Lemma~\ref{lemma_cont} it follows that $\tilde{f}$  satisfies $\| \tilde{f} -f^* \|_{L_2}  \leq   \lambda_{\tilde{k}} \delta \phi(f^*) /((A^{-1}-\theta)r^*) \leq   2 \lambda_{\tilde{k}} \delta \phi_{\tilde k} /((A^{-1}-\theta)r^*) \leq  \big(2\delta /(A^{-1}-\theta) \big) r^* $  and $\phi(\tilde f -f^*) \leq \eta(2+\gamma) \phi(f^*)$. \\
	
	We finish this section by showing a \emph{oracle inequality} for $\tilde{f}$. From the fact that $\| \tilde{f} -f^* \|_{L_2}  \leq \big(2\delta /(A^{-1}-\theta) \big) r^* $  and $\phi(\tilde f -f^*) \leq \eta(2+\gamma) \phi(f^*)$, it follows, on $\tilde{\Omega}$ that $(P-P_N)\cL_{\tilde{f}} \leq \theta (r^*)^2$. For all $\lambda > 0$ 
	\begin{align*}
		P\cL_{\tilde{f}} = P_N \cL_{\tilde{f}}  + (P-P_N)\cL_{\tilde{f}} \leq P_N \cL_{\tilde{f}}^{\lambda} + \lambda \big(\phi(f^*)- \phi(\tilde f) \big) + \theta (r^*)^2 \leq   P_N \cL_{\tilde{f}}^{\lambda} + \lambda \phi(f^*) + \theta (r^*)^2 \enspace.
	\end{align*} In particular for $\lambda = \lambda_{\tilde{k}}$ one has $P_N \cL_{\tilde{f}}^{\lambda_{\tilde k}} \leq 2(\theta + 2) \phi_{\tilde{k}} \lambda_{\tilde k} \leq 2(\theta +1) (r^*)^2$ and $\lambda_{\tilde k} \phi(f^*) \leq 2 (r^*)^2$. Finally 
	\begin{equation*}
		P \cL_{\tilde{f}} \leq (4+ 3\theta)(r^*)^2
	\end{equation*}

	\section{Proof Theorem \ref{main:theo_mom} minmax MOM estimators}
	
	Let $\tilde r$ and $C_{S,r}$ design respectively $\tilde{r} (\tilde A)$ and $C_{s,r} (\tilde{A})$. Moreover, all along the proof, the following notations will be used repeatedly.
	\begin{gather*}
		A = \tilde{A},   \quad\theta = \frac{1}{2A}, \quad \delta = \frac{2}{A} + 3 \quad \gamma =  \frac{2}{A} + 2 , \quad \mu = \frac{\theta}{ 192 L } \enspace.
	\end{gather*}
	The proof is divided into two parts. First, we identify an event where the minmax MOM estimators $\hat{f}_S^{\lambda} := \hat{f}_S$ is controlled. 
	Then, we prove that this event holds with large probability. 
	Let $S \geqslant 7|\cO| /3$, and
	\begin{equation*}
		C_{s,r} = \max \bigg(\frac{96L^2S}{\theta^2 N},\tilde{r}^2 \bigg) \quad \mbox{and} \quad \rho^* = \eta( 2+ \gamma) \phi(f^*)
	\end{equation*}
	Let $\mathcal{B}_{\lambda,S} = \{ f \in E:  \;  \|f-f^*\|_{L_2} \leq \frac{\delta}{A^{-1}- \theta} \frac{\lambda \phi(f^*) }{\sqrt{C_{s,r}}} \quad \mbox{and} \quad \phi(f^*-f^*) \leq \rho^*  \}$. Consider the following event
	\begin{equation} \label{omega_K}
	\Omega_S   =  \bigg \{  \forall f \in F\cap \sqrt{C_{S,r}} B_{L_2} \cap B_{\rho^*}^{\phi} (f^*),  \qquad
	\sum_{s=1}^SI\bigg(\bigg| (P_{B_s} - P)(\ell_f-\ell_{f^{*}}) \bigg| \leq \theta C_{s,r}\bigg)\geqslant \frac S2 \bigg \}\enspace.
	\end{equation}

	\subsection{Deterministic argument}
	
	\begin{Lemma}\label{lem:Basic}
		$\hat f_S \in\mathcal{B}_{\lambda,S}$ if the following inequalities holds
		\begin{align}\label{obj_proof} 
			& \sup_{f \in   F \backslash \mathcal{B}_{\lambda,s} } \quad MOM_{S}(\ell_{f^{*}}-\ell_f) + \lambda \big( \phi(f^{*})-\phi(f)  \big) \leq - 2(\theta+1) \lambda \phi(f^*)\enspace,\\
			& \sup_{f \in F\cap \mathcal{B}_{\lambda,S} } MOM_{S}(\ell_{f^{*}}-\ell_f)+ \lambda \big( \phi(f^{*})-\phi(f)  \big) \leq  (\theta+1) \lambda \phi(f^*)\enspace.\label{cond_gen}
		\end{align}
		
	\end{Lemma}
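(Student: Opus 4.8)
The plan is to use that, by definition, $\hat f_S$ minimises over $F$ the functional
\[
\mathcal{C}(f) \;:=\; \sup_{g \in F}\Big( MOM_S(\ell_f - \ell_g) + \lambda\big(\phi(f)-\phi(g)\big)\Big),
\]
and to derive a contradiction from the assumption $\hat f_S \in F\setminus\mathcal{B}_{\lambda,S}$ by comparing $\mathcal{C}(\hat f_S)$ with $\mathcal{C}(f^*)$. Observe first that $f^*\in\mathcal{B}_{\lambda,S}$: the radius $\tfrac{\delta}{A^{-1}-\theta}\tfrac{\lambda\phi(f^*)}{\sqrt{C_{s,r}}}$ is nonnegative and $\phi(0)=0\leq\rho^*$, so the stated conclusion is at least consistent.

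First I would bound $\mathcal{C}(f^*)$ from above. Splitting the inner supremum according to whether $g \in F \cap \mathcal{B}_{\lambda,S}$ or $g \in F \setminus \mathcal{B}_{\lambda,S}$, hypothesis \eqref{cond_gen} controls the first part by $(\theta+1)\lambda\phi(f^*)$, while hypothesis \eqref{obj_proof} controls the second part by the strictly smaller quantity $-2(\theta+1)\lambda\phi(f^*)$. Hence $\mathcal{C}(f^*) \leq (\theta+1)\lambda\phi(f^*)$.

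Next, assume for contradiction that $\hat f_S \in F \setminus \mathcal{B}_{\lambda,S}$. Testing the inner supremum defining $\mathcal{C}(\hat f_S)$ against the single competitor $g = f^*$ gives
\[
\mathcal{C}(\hat f_S) \;\geq\; MOM_S(\ell_{\hat f_S} - \ell_{f^*}) + \lambda\big(\phi(\hat f_S)-\phi(f^*)\big).
\]
Since $P_{B_s}(\ell_{\hat f_S} - \ell_{f^*}) = -P_{B_s}(\ell_{f^*} - \ell_{\hat f_S})$ for every block $B_s$, the block median is antisymmetric, so $MOM_S(\ell_{\hat f_S} - \ell_{f^*}) = - MOM_S(\ell_{f^*} - \ell_{\hat f_S})$, and the right-hand side equals $-\big(MOM_S(\ell_{f^*} - \ell_{\hat f_S}) + \lambda(\phi(f^*)-\phi(\hat f_S))\big)$. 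Because $\hat f_S \in F \setminus \mathcal{B}_{\lambda,S}$, inequality \eqref{obj_proof} applied at $f=\hat f_S$ bounds the bracket by $-2(\theta+1)\lambda\phi(f^*)$, whence $\mathcal{C}(\hat f_S) \geq 2(\theta+1)\lambda\phi(f^*)$. Together with the minimality $\mathcal{C}(\hat f_S) \leq \mathcal{C}(f^*) \leq (\theta+1)\lambda\phi(f^*)$ and $\lambda\phi(f^*)>0$ (so that $2(\theta+1)\lambda\phi(f^*)>(\theta+1)\lambda\phi(f^*)$), this is impossible; therefore $\hat f_S \in \mathcal{B}_{\lambda,S}$.

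The computation is essentially bookkeeping, and the only point I expect to require care is the antisymmetry $MOM_S(\ell_f - \ell_g) = - MOM_S(\ell_g - \ell_f)$: this needs the empirical median to be an odd function of its arguments, which holds verbatim when the number of blocks $S$ is odd (and for even $S$ after fixing a symmetric convention for the median, or by retaining only the one-sided bound actually used). Beyond that, the argument simply records that \eqref{obj_proof}--\eqref{cond_gen} are calibrated --- through the factor $2$ on the right of \eqref{obj_proof} --- so that $\mathcal{C}(f^*)$ sits strictly below the level $\mathcal{C}$ is forced to attain outside $\mathcal{B}_{\lambda,S}$.
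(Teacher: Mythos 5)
Your proposal is correct and follows essentially the same route as the paper: bound $\sup_{g}$ at $f^*$ by splitting over $\mathcal{B}_{\lambda,S}$ and its complement via \eqref{cond_gen}--\eqref{obj_proof}, use the antisymmetry (the paper calls it homogeneity) of $MOM_S$ together with \eqref{obj_proof} to force the criterion above $2(\theta+1)\lambda\phi(f^*)$ outside $\mathcal{B}_{\lambda,S}$, and conclude by minimality of $\hat f_S$. The only cosmetic difference is that you phrase it as a contradiction while the paper argues directly, and your remarks on the median convention and on $\lambda\phi(f^*)>0$ are sensible but not substantive departures.
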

	\begin{proof}
		For any $f\in F$, denote by $S(f)=\sup_{g\in F} MOM_S(\ell_f-\ell_g) + \lambda \big( \phi(f)-\phi(g)  \big)$.
		If \eqref{obj_proof} holds, by homogeneity of $MOM_S$, any $f\in F \backslash \mathcal{B}_{\lambda,S}$ satisfies
		\begin{equation*}
			S(f)\geqslant  MOM_S(\ell_f-\ell_{f^*})+ \lambda \big( \phi(f)-\phi(f^{*})  \big)> 2(\theta+1) \lambda \phi(f^*) \enspace.\label{eq:Task2} 
		\end{equation*}
		On the other hand, if \eqref{cond_gen} and~\eqref{obj_proof} hold,
		\begin{align} \nonumber
			S(f^{*})= & \sup_{f\in F } MOM_S(\ell_{f^*}-\ell_f) + \lambda \big( \phi(f^{*})-\phi(f)  \big)  \leqslant   (\theta+1) \lambda \phi(f^*)\enspace.\label{eq:Task1}
		\end{align}
		Thus, by definition of $\hat{f}_S$ and \eqref{cond_gen},
		\[
		S(\hat{f}_S)\leqslant S(f^*)\leqslant  (\theta+1) \lambda \phi(f^*) \enspace.
		\]
		Therefore, if \eqref{obj_proof} and \eqref{cond_gen} hold,
		$\hat{f}_s \in \mathcal{B}_{\lambda,S}$.	
	\end{proof}  
	
	\begin{Lemma} \label{lemma_mom}
		For all $S \geq 7|\cO|/3$ and $\lambda \geq C_{S,r} / \phi(f^*)$, inequalities \eqref{obj_proof} and \eqref{cond_gen} holds on $\Omega_S$. 
	\end{Lemma}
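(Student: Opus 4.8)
The plan is to mimic, \emph{mutatis mutandis}, the deterministic part of the proof of Theorem~\ref{thm_erm_conv2} (Lemma~\ref{lemma_cont}), with the empirical measure $P_N$ replaced by the block functional $MOM_S$, and then invoke Lemma~\ref{lem:Basic}. So it suffices to verify \eqref{obj_proof} and \eqref{cond_gen} on $\Omega_S$ (the constraint $S\ge 7|\cO|/3$ plays no role at this deterministic stage). First I would set up the homogeneity/projection reduction. Fix $f\in F\setminus\mathcal{B}_{\lambda,S}$; by convexity of $F$ (Assumption~\ref{assum:convex}) there are $\alpha>1$ and $f_0\in\partial\mathcal{B}_{\lambda,S}$ with $f-f^*=\alpha(f_0-f^*)$. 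With $\psi_i(u)=\bar\ell(u+f^*(X_i),Y_i)-\bar\ell(f^*(X_i),Y_i)$, which is convex with $\psi_i(0)=0$, one has $\psi_i(\alpha u)\ge\alpha\psi_i(u)$ for $\alpha\ge1$, hence \emph{block by block} $P_{B_s}(\ell_{f^*}-\ell_f)\le\alpha P_{B_s}(\ell_{f^*}-\ell_{f_0})$ for every $s$; since the median is nondecreasing and positively homogeneous, $MOM_S(\ell_{f^*}-\ell_f)\le\alpha\,MOM_S(\ell_{f^*}-\ell_{f_0})$. Likewise $\phi(f^*)-\phi(f)\le\alpha(\phi(f^*)-\phi(f_0))$ by Lemma~\ref{lem_conv}. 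Consequently $MOM_S(\ell_{f^*}-\ell_f)+\lambda(\phi(f^*)-\phi(f))\le\alpha\big(MOM_S(\ell_{f^*}-\ell_{f_0})+\lambda(\phi(f^*)-\phi(f_0))\big)$, so it is enough to bound the bracket at $f_0$ by $-2(\theta+1)\lambda\phi(f^*)$: being negative, it is only decreased upon multiplication by $\alpha>1$.

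Next I would split $\partial\mathcal{B}_{\lambda,S}$ into its two faces, as in Lemma~\ref{lemma_cont}: face (1), where $\phi(f_0-f^*)=\rho^*$ and $\|f_0-f^*\|_{L_2}\le(4+6\tilde A)\lambda\phi(f^*)/\sqrt{C_{S,r}}$, so that Assumption~\ref{assum:phi} together with $\rho^*=\eta(4+2/\tilde A)\phi(f^*)$ gives $\phi(f_0)-\phi(f^*)\ge\gamma\phi(f^*)$ and the regularization does the work; and face (2), where $\|f_0-f^*\|_{L_2}=(4+6\tilde A)\lambda\phi(f^*)/\sqrt{C_{S,r}}$ and $\phi(f_0-f^*)\le\rho^*$, so that only $\phi(f^*)-\phi(f_0)\ge-\phi(f^*)$ is available and the local Bernstein Assumption~\ref{assum:fast_rates_MOM} does the work. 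In either face, if $\|f_0-f^*\|_{L_2}>\sqrt{C_{S,r}}$ I perform a \emph{second} projection $f_0-f^*=\alpha_1(f_1-f^*)$ with $\alpha_1=\|f_0-f^*\|_{L_2}/\sqrt{C_{S,r}}\ge1$, so that $\|f_1-f^*\|_{L_2}=\sqrt{C_{S,r}}$ and $\phi(f_1-f^*)\le\phi(f_0-f^*)\le\rho^*$; then $f_1$ lies in the set $F\cap\sqrt{C_{S,r}}B_{L_2}\cap B_{\rho^*}^{\phi}(f^*)$ appearing both in $\Omega_S$ and in Assumption~\ref{assum:fast_rates_MOM}, and $MOM_S(\ell_{f^*}-\ell_{f_0})\le\alpha_1 MOM_S(\ell_{f^*}-\ell_{f_1})$ by the same block-wise argument. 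On $\Omega_S$, at least $S/2$ blocks satisfy $|(P_{B_s}-P)(\ell_{f_1}-\ell_{f^*})|\le\theta C_{S,r}$, hence $P_{B_s}(\ell_{f^*}-\ell_{f_1})\le-P\cL_{f_1}+\theta C_{S,r}$ there, and therefore $MOM_S(\ell_{f^*}-\ell_{f_1})\le-P\cL_{f_1}+\theta C_{S,r}$. In face (1) one then uses $P\cL_{f_1}\ge0$ and $\phi(f_0)-\phi(f^*)\ge\gamma\phi(f^*)$; in face (2) one uses $P\cL_{f_1}\ge C_{S,r}/\tilde A$ (Assumption~\ref{assum:fast_rates_MOM}) together with $\alpha_1=(4+6\tilde A)\lambda\phi(f^*)/C_{S,r}$, so that $\alpha_1(1/\tilde A-\theta)C_{S,r}=\delta\lambda\phi(f^*)$ telescopes. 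Plugging in $\theta=1/(2\tilde A)$, $\gamma=2+2/\tilde A$, $\delta=3+2/\tilde A$ and $\lambda\phi(f^*)\ge C_{S,r}$, both faces yield the bracket $\le-2(\theta+1)\lambda\phi(f^*)$, which proves \eqref{obj_proof}. (When $\|f_0-f^*\|_{L_2}\le\sqrt{C_{S,r}}$, the second projection is skipped and $f_0$ is fed directly to $\Omega_S$.)

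Inequality \eqref{cond_gen} is the easy direction: for $f\in F\cap\mathcal{B}_{\lambda,S}$ one has $\lambda(\phi(f^*)-\phi(f))\le\lambda\phi(f^*)$ since $\phi\ge0$, and $MOM_S(\ell_{f^*}-\ell_f)\le\theta C_{S,r}\le\theta\lambda\phi(f^*)$ — directly from $\Omega_S$ and $P\cL_f\ge0$ when $\|f-f^*\|_{L_2}\le\sqrt{C_{S,r}}$, and through the same second projection (which then even makes the $MOM_S$ term nonpositive) otherwise — so the sum is $\le(\theta+1)\lambda\phi(f^*)$. Then Lemma~\ref{lem:Basic} gives $\hat f_S\in\mathcal{B}_{\lambda,S}$, i.e.\ Theorem~\ref{main:theo_mom}.

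The main obstacle, relative to the RERM argument, is that $MOM_S$ is not linear, so one cannot split $MOM_S(\ell_{f^*}-\ell_f)$ into a ``$P$-term plus fluctuation'' and run separate multiplier and Bernstein analyses. The fix is to carry the homogeneity inequality at the level of each block mean $P_{B_s}$ — where it is just the convexity of $\psi_i$ — and only then pass to the median, using monotonicity and positive homogeneity of the median, and the fact that on $\Omega_S$ a majority of block means sit near their common expectation $P\cL_f$, so that an upper bound valid on those blocks is an upper bound on the median. The rest is bookkeeping: choosing the $L_2$-radius of $\mathcal{B}_{\lambda,S}$ as $(4+6\tilde A)\lambda\phi(f^*)/\sqrt{C_{S,r}}$ so that the second-projection factor $\alpha_1$ in face (2) cancels $1/\tilde A-\theta=1/(2\tilde A)$, and checking that the chain of numerical inequalities in $\theta,\gamma,\delta$ closes.
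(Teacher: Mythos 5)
Your proposal is correct and follows essentially the same route as the paper: the block-wise homogeneity/projection argument of Lemma~\ref{lemma_cont} applied to each $P_{B_s}$, the passage to the median via the at-least-$S/2$ blocks controlled on $\Omega_S$, the local Bernstein Assumption~\ref{assum:fast_rates_MOM} on the $\sqrt{C_{S,r}}$-sphere, and the same constants $\theta,\gamma,\delta$ (the paper even states the argument is "exactly the same" as Lemma~\ref{lemma_cont} and spells out only the case you call face (1) with the second projection). Your write-up just makes explicit the remaining cases and the monotonicity/homogeneity of the median that the paper leaves implicit.
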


	\begin{proof}
		The arguments are exaclty the same as the one in the proof of Lemma~\ref{lemma_cont}. For all functions $f \in F\backslash \mathcal{B}_{\lambda,S}$ and for each block $B_s$ there exist $\alpha \geq 1$ and $f_0 \in F$ in the border of $\mathcal{B}_{\lambda,S}  $ such that $P_{B_s} \cL_f \geq \alpha P_{B_s} \cL_{f_0} $. We present here only one case (the others are trivial applications of the arguments in the proof of Lemma~\ref{lemma_cont}). In the case where $\phi(f_0-f^*)= \rho^*$ and $\sqrt{C_{S,r}} \leq  \norm{f_0-f^*}_{L_2} \leq  (\lambda \delta \phi(f^*))/((A^{-1}-\theta)\sqrt{C_{S,r}})$.  We still have $\lambda \big( \phi(f_0) -\phi((f^*) \big) \geq \lambda \gamma \phi(f^*)$. Using the projection trick, there exists $\alpha_1 > 1$ such that on each block $B_s$, $P_{B_s} \cL_{f_0} \geq \alpha_1 P_{B_s} \cL_{f_1}$ for $f_1$ such that $\|f_1-f^*\|_{L_2} = \sqrt{C_{S,r}}$ and $\phi(f_1-f^*) \leq \rho^*$ and then, on the event $\Omega_S$, one more than $S/2$ blocks $B_s$ 
		\begin{equation} \label{mom_ex}
		P_{B_s} \cL_{f}^\lambda  \geq \alpha \big(P_{B_s}\cL_{f_0} + \gamma \lambda \phi(f^*)\ \big) \geq \alpha \big( \alpha_1 P_{B_s} \cL_{f_1} + \gamma \lambda \phi(f^*)\ \big) \geq   \gamma \lambda \phi(f^*) > 2 (\theta + 1 ) \lambda \phi(f^*)
		\end{equation}
		where we used the fact that on $\Omega_S$, there are at least $S/2$ blocks $B_s$ such that, $P_{B_s} \cL_{f_1} \geq P\cL_{f_1} - \theta C_{S,r} \geq A^{-1} \|f_1-f^*\|_{L_2}^2 - \theta C_{S,r}= (A^{-1}  - \theta) C_{S,r} > 0 $ and  Assumption~\ref{assum:fast_rates_MOM}. \\
		As Equation~\eqref{mom_ex} holds on more than $S/2$ blocks we get that 
		\begin{equation*}
			MOM_S(\ell_{f} - \ell_{f^*}) + \lambda \big( \phi(f) - \phi(f^*)  \big) \geq 2( \theta +1) \lambda \phi(f^*) 
		\end{equation*}
		From the same arguments as the one in the proof of Lemma~\ref{lemma_cont} we finally obtain 
		\begin{align*}
			& \sup_{f \in   F \backslash \mathcal{B}_{\lambda,S} } \quad MOM_{S}(\ell_{f^{*}}-\ell_f) + \lambda \big( \phi(f^{*})-\phi(f)  \big)< - 2(\theta +1) \lambda \phi(f^*)\enspace,\\
			& \sup_{f \in F \cap \cB_{\lambda,S}} MOM_{S}(\ell_{f^{*}}-\ell_f)+ \lambda \big( \phi(f^{*})-\phi(f)  \big) \leq  (\theta+1) \lambda \phi(f^*) 
		\end{align*}
		which concludes to proof. 
	\end{proof}
	\subsection{Control of the stochastic event} \label{sec:mom}
	Contrary to the deterministic argument, the control of the stochastic event is very different from the one for the RERM. 
	\begin{Proposition}\label{lem:close}
		Grant Assumptions~\ref{assum:lip_conv},~\ref{assum:convex},~\ref{assum:phi},~\ref{assum:moments} and~\ref{assum:fast_rates_MOM}. Let $S \geq 7|\cO|/3$. Then $\Omega_S$ holds with probability larger than $1-2\exp(- S/504)$.
	\end{Proposition}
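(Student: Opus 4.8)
The plan is to establish $\Omega_S$ through the now-standard two-step scheme for minmax MOM estimators: replace each indicator by a bounded Lipschitz surrogate, bound the expectation of the resulting bounded empirical process by a second-moment argument, bound its fluctuations by a bounded-differences inequality \emph{at the block level}, and reduce the expected supremum to the Rademacher complexity that defines $\tilde r(\tilde A)$ in~\eqref{comp:rad}. Concretely, fix a Lipschitz map $\varphi:\R_+\to[0,1]$ with $\varphi\equiv 0$ on $[0,1/2]$, $\varphi\equiv 1$ on $[1,\infty)$, so that $\dsone(x\ge a)\le\varphi(x/a)\le\dsone(x\ge a/2)$ for all $x,a>0$, and put $W_s(f)=\varphi\bigl(|(P_{B_s}-P)\cL_f|/(\theta C_{S,r})\bigr)$, where $\cL_f=\ell_f-\ell_{f^*}$. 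For every $f$, the number of blocks with $|(P_{B_s}-P)\cL_f|>\theta C_{S,r}$ is at most $|\cO|$ (blocks meeting $\cO$) plus $\sum_{s:\,B_s\subset\cI}W_s(f)$; since $S\ge 7|\cO|/3$ forces $|\cO|\le 3S/7$, it is enough to show that, with probability at least $1-2\exp(-S/504)$,
\[
\sup_{f\in F\cap(f^*+\sqrt{C_{S,r}}B_{L_2})\cap B_{\rho^*}^{\phi}(f^*)}\ \sum_{s:\,B_s\subset\cI}W_s(f)\ <\ \tfrac{S}{14},
\]
for then strictly fewer than $3S/7+S/14=S/2$ blocks are ``bad'', i.e.\ at least $S/2$ blocks satisfy $|(P_{B_s}-P)\cL_f|\le\theta C_{S,r}$, which is exactly $\Omega_S$. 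Call the set over which the supremum runs the localized set.

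The expectation is controlled by Chebyshev's inequality. For $B_s\subset\cI$, Assumption~\ref{assum:moments} makes the data in $B_s$ independent with $\E[P_{B_s}\cL_f]=P\cL_f$, and the $L$-Lipschitz property of Assumption~\ref{assum:lip_conv} gives, on the localized set, $\operatorname{Var}(P_{B_s}\cL_f)\le\tfrac{S}{N}L^2\|f-f^*\|_{L_2}^2\le\tfrac{S}{N}L^2C_{S,r}$; since $C_{S,r}\ge 96L^2S/(\theta^2 N)$ this is at most $\theta^2C_{S,r}^2/96$, so $\E W_s(f)\le\bP\bigl(|(P_{B_s}-P)\cL_f|\ge\theta C_{S,r}/2\bigr)\le 4/96=1/24$, and $\sup_f\sum_{B_s\subset\cI}\E W_s(f)\le S/24$.

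For the fluctuations, set $g=\sup_f\sum_{B_s\subset\cI}(W_s(f)-\E W_s(f))$. The informative block-samples are independent, modifying one of them changes $g$ by at most $1$ (as $0\le W_s\le 1$), and the weak variances satisfy $\sup_f\sum_s\operatorname{Var}(W_s(f))\le S/24$ by the previous step; a Bousquet-type bounded-differences inequality therefore gives $\bP(g\ge\E g+t)\le\exp(-S/504)$ for a suitable $t$ of order $S$. It remains to bound $\E g$. Symmetrizing at the block level and applying the contraction principle to the Lipschitz $\varphi$ (which vanishes at $0$) yields
\[
\E g\ \le\ 2\,\E\sup_f\sum_{B_s\subset\cI}\sigma_s W_s(f)\ \le\ \frac{C}{\theta C_{S,r}}\,\E\sup_f\Bigl|\sum_{B_s\subset\cI}\sigma_s(P_{B_s}-P)\cL_f\Bigr|;
\]
conditioning on the block signs $(\sigma_s)$ and grouping the informative blocks accordingly rewrites the inner expression through empirical means $P_J\cL_f$ over unions $J\subset\cI$ of informative blocks (one arranges that the relevant $J$ have $|J|\ge N/2$, available since $|\cO|\le 3S/7$), and then symmetrizing at the data level and contracting away the $L$-Lipschitz loss ($\cL_{f^*}=0$) bounds it by a multiple of $\tfrac{S}{N}L\,\E\sup_f|\sum_{i\in J}\sigma_i(f-f^*)(X_i)|$. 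Since $\sqrt{C_{S,r}}\ge\tilde r(\tilde A)$, a star-shapedness argument combined with~\eqref{comp:rad} and $C_{S,r}\ge\tilde r^2(\tilde A)$ bounds this by $(384\tilde AL)^{-1}C_{S,r}|J|$, so $\E g$ is a small multiple of $S$; the constants $96L^2$, $192L$ and $(384\tilde AL)^{-1}$ entering $C_{S,r}$, $\mu$ and~\eqref{comp:rad} are calibrated so that $S/24+\E g+t<S/14$. Combining the three estimates, on the Bousquet event $\sup_f\sum_{B_s\subset\cI}W_s(f)<S/14$, and the factor $2$ in the final probability absorbs the two-sided control of the absolute value inside $W_s$; this proves the proposition.

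The step I expect to be the main obstacle is this last reduction: passing from the block-indexed Rademacher sum $\sum_{B_s\subset\cI}\sigma_s(P_{B_s}-P)\cL_f$ to the data-indexed complexity $\E\sup_f|\sum_{i\in J}\sigma_i(f-f^*)(X_i)|$ over index sets $J\subset\cI$ with $|J|\ge N/2$ — discarding the centering $-P\cL_f$, dealing with the block structure of the Rademacher signs, and making sure only sets $J$ of size at least $N/2$ appear — while keeping every numerical constant sharp enough that $S/24+\E g+t$ stays strictly below the budget $S/14$ left by the outlier count $3S/7$; the remaining steps are routine.
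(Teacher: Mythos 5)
Your overall strategy is the paper's: replace the indicators by a Lipschitz surrogate, bound the expected surrogate on uncorrupted blocks by Chebyshev using $\operatorname{Var}(P_{B_s}\cL_f)\leq L^2 (S/N)\|f-f^*\|_{L_2}^2$ and $C_{S,r}\geq 96L^2S/(\theta^2N)$ (giving $1/24$ per block), control fluctuations by a bounded-differences inequality at the block level, and reduce the expected supremum by block-level then data-level symmetrization and contraction to the Rademacher complexity defining $\tilde r(\tilde A)$, handling the case $C_{S,r}>\tilde r^2$ by homogeneity. All of that matches the paper's proof, including the passage to sets $J=\cup_{s\in\cS}B_s$ with $|J|=|\cS|N/S\geq 4N/7\geq N/2$.

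The genuine gap is in your final bookkeeping. You charge the worst case $|\cO|\leq 3S/7$ up front and therefore must prove $\sup_f\sum_{s:\,B_s\subset\cI}W_s(f)<S/14$. But with the paper's (fixed) constants this threshold is not attainable: the three terms you must sum each scale with the number $|\cS|$ of uncorrupted blocks, not with $S/42$. Concretely, the expectation term is $\leq|\cS|/24$, the bounded-differences deviation needed to reach confidence $\exp(-S/504)$ is again of order $|\cS|/24$ (the paper takes $t=|\cS|/24$, giving $\exp(-|\cS|/288)\leq\exp(-S/504)$ since $|\cS|\geq 4S/7$), and the symmetrization--contraction chain bounds $\E g$ by $(8L/\theta)\mu|\cS|=|\cS|/24$, because $\mu=\theta/(192L)=(384\tilde AL)^{-1}$ is exactly the constant in \eqref{comp:rad}; these constants are part of the definitions of $C_{S,r}$ and $\tilde r$ and are not yours to recalibrate. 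The provable bound on the bad informative blocks is thus $|\cS|/8$, which exceeds your budget $S/14$ whenever $|\cS|>4S/7$ (e.g.\ when there are no outliers, $|\cS|=S$ and $|\cS|/8=S/8>S/14$), so the asserted inequality ``$S/24+\E g+t<S/14$'' is false and your reduction does not close. The repair is precisely the paper's accounting: keep everything relative to $|\cS|$, conclude that at least $7|\cS|/8$ informative blocks satisfy $|(P_{B_s}-P)\cL_f|\leq\theta C_{S,r}$, and only at the end use $S\geq 7|\cO|/3$, hence $|\cS|\geq S-|\cO|\geq 4S/7$, to get $7|\cS|/8\geq S/2$ and the probability $1-\exp(-S/504)$.
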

	
	\begin{proof}
		Let $\mathcal F = \{  f \in F: \; \|f-f^*\|_{L_2} \leq \sqrt{C_{S,r}}, \; \phi(f-f^*) \leq \rho^*  \}$  and let $h(t) = I \{ t\geq 2 \} + (t-1) I \{1 \leq t \leq 2 \}$. This function satisfies $\forall t \in \mathbb{R}^+, \;   I\{ t\geq 2 \} \leq h(t) \leq I \{ t\geq 1 \}$. Let $W_s = ((X_i,Y_i))_{i \in B_s}$ and, for any $f\in\cF$, let $G_f(W_s) = (P_{B_s} - P)(\ell_f-\ell_{f^{*}})$. Let also $C_{S,r}=\max \bigg( 96L^2S/(\theta^2 N), \tilde{r}^2 \bigg) $. For any $f\in\cF$, let
		\begin{align*}
			z(f) & = \sum_{s =1}^S I \{|G_f(W_s)|\leq \theta C_{S,r}\}\enspace.
		\end{align*}
		Proposition \ref{lem:close} will be proved if $\bP \big(z(f) \geq S/2) \geqslant 1-e^{-S/504}$.
		Let $\mathcal{S}$ denote the set of indices of blocks which have not been corrupted by outliers, $\mathcal{S} = \{s \in \{1,\cdots,S \} : B_s \subset \mathcal{I}\}$. Basic algebraic manipulations show that
		\begin{multline}\label{eq:LBzf1}
			z(f) \geqslant |\mathcal{S}| - \sup_{f\in \mathcal F} \sum_{s \in \mathcal{S}} \bigg( h \big(2(\theta C_{S,r})^{-1} | G_f(W_s)| \big) - \mathbb{E} h\big(2(\theta C_{S,r})^{-1} | G_f(W_s)|\big) \bigg) \\
			-  \sum_{s \in \mathcal{S} } \mathbb{E}h\big(2(\theta C_{S,r})^{-1} | G_f(W_s)|\big) \enspace.
		\end{multline} 
		The last term in \eqref{eq:LBzf1} can be bounded from below since for all $f\in \cF$ and $s\in \cS$,
		\begin{align*}
			\mathbb{E}h\big(2(\theta C_{S,r})^{-1} | G_f(W_s)|\big) & \leqslant \mathbb{P} \bigg( |G_f(W_s)| \geq \frac{\theta C_{S,r}}{2} \bigg) \leqslant \frac{4\mathbb{E}G_f(W_s)^2}{(\theta C_{S,r})^2}   \\
			&  \leqslant  \frac{4S^2}{\theta^2 C_{S,r}^2N^2} \sum_{i \in B_s} \mathbb{E} [(\ell_f-\ell_{f^{*}})^2(X_i,Y_i)] \leq \frac{4L^2S}{\theta^2 C_{S,r}^2N}\|f-f^{*}\|^2_{L_2} \enspace.
		\end{align*}
		The last inequality follows from Assumption \ref{assum:moments}. 
		Since $\|f-f^{*}\|_{L_2} \leq \sqrt{C_{S,r}}$,
		\[
		\mathbb{E}h\big(2(\theta C_{S,r})^{-1} | G_f(W_s)|\big)\leqslant \frac{4L^2S}{\theta^2 C_{S,r}N}\enspace.
		\]
		As $C_{S,r}\geqslant 96L^2S/(\theta^2 N)$,  
		\begin{equation*}
			\mathbb{E}h\big(2(\theta C_{S,r})^{-1} | G_f(W_s)|\big)  \leq \frac{1}{24} \enspace.
		\end{equation*}
		Plugging this inequality in \eqref{eq:LBzf1} yields
		\begin{align}\label{res::1}
			z(f) \geq |\mathcal{S}|(1-\frac1{24}) -\sup_{ f \in \mathcal F
			} \sum_{s \in \mathcal{S}} \bigg( h\big(2(\theta C_{S,r})^{-1} | G_f(W_s)|\big) - \mathbb{E} h\big(2(\theta C_{S,r})^{-1} | G_f(W_s)|\big) \bigg) \enspace.
		\end{align}
		Using the Mc Diarmid's inequality, with probability larger than $1-\exp(- |\cS|/288 )$ we get 
		\begin{align*}
			\sup_{f \in \mathcal F} & \sum_{s \in \mathcal{S}} \bigg( h\big(2(\theta C_{S,r})^{-1} | G_f(W_s)|\big)  - \mathbb{E} h\big(2(\theta C_{S,r})^{-1} | G_f(W_s)|\big) \bigg)   \\
			& \leq  \frac{|\mathcal{S}|}{24} + \mathbb{E} \sup_{f \in \mathcal F}   \sum_{s \in \mathcal{S}} \bigg( h\big(2(\theta C_{S,r})^{-1} | G_f(W_s)|\big) - \mathbb{E}  h\big(2(\theta C_{S,r})^{-1} | G_f(W_s)|\big) \bigg)\enspace.
		\end{align*}
		By the symmetrization lemma, it follows that 
		\begin{multline*}
			\sup_{f \in \mathcal F}  \sum_{s \in \mathcal{S}} \bigg( h\big(2(\theta C_{S,r})^{-1} | G_f(W_s)|\big)  - \mathbb{E} h\big(2(\theta C_{S,r})^{-1} | G_f(W_s)|\big) \bigg) \\
			\leqslant \frac{|\mathcal{S}|}{24} + 2 \mathbb{E} \sup_{f \in \mathcal F }   \sum_{s \in \mathcal{S}} \sigma_k h\big(2(\theta C_{S,r})^{-1} | G_f(W_s)|\big) \enspace.
		\end{multline*}
		As $\phi$ is 1-Lipschitz with $\phi(0)=0$, the contraction Lemma from \cite{ledoux2013probability} and yields 
		\begin{align*}
			\sup_{f \in \mathcal F} \sum_{s \in \cS} \bigg( h\big(2(\theta C_{S,r})^{-1} | G_f(W_s)|\big) -  &\mathbb{E} h\big(2(\theta C_{S,r})^{-1} | G_f(W_s)|\big) \bigg)  \\
			& \leqslant \frac{|\mathcal{S}|}{24} + \frac{4}{\theta} \mathbb{E}  \sup_{f \in \mathcal F}  \quad  \sum_{s \in \mathcal{S}}  \sigma_s \frac{ G_f(W_s)}{C_{S,r}}  \\
			& = \frac{|\mathcal{S}|}{24} + \frac{4}{\theta} \mathbb{E} \sup_{f \in \mathcal F} \quad \sum_{s \in \mathcal{S}}  \sigma_s \frac{(P_{B_s}- P)(\ell_f-\ell_{f^{*}})}{C_{S,r}}  \\
		\end{align*}
		For any $s\in \cS$, let $(\sigma_i)_{i \in B_s}$ independent from $(\sigma_s)_{s \in \mathcal{S}}$, $(X_i)_{i \in \cI}$ and $(Y_i)_{i \in \cI}$. The vectors  $(\sigma_i \sigma_s (\ell_f-\ell_{f^{*}})(X_i,Y_i))_{i,f}$ and $(\sigma_i (\ell_f-\ell_{f^{*}})(X_i,Y_i))_{i,f}$ have the same distribution. Thus, by the symmetrization and contraction lemmas, with probability larger than $1-\exp(- |\cS|/288)$,
		\begin{align}
			\notag	\sup_{f \in \mathcal F } \sum_{s \in \mathcal{S}} \bigg( h\big(2C_{S,r}^{-1} | G_f(W_k)|\big)& -  \mathbb{E} h\big(2C_{S,r}^{-1} | G_f(W_s)|\big) \bigg)  \\
			\notag	&  \leq \frac{|\mathcal{S}|}{24}+ \frac{8}{\theta} \mathbb{E}  \sup_{f \in \mathcal F}  \quad \sum_{s \in \mathcal{S}} \frac{1}{|B_s|} \sum_{i \in B_s} \sigma_i \frac{ (\ell_f-\ell_{f^{*}})(X_i,Y_i)}{C_{S,r}}   \\ 
			\notag	& = \frac{|\mathcal{S}|}{24} + \frac{8S}{\theta N} \mathbb{E} \sup_{f \in \mathcal F} \quad \sum_{i \in \cup_{s \in \mathcal{S}} B_s}  \sigma_i  \frac{(\ell_f-\ell_{f^{*}})(X_i,Y_i)}{C_{S,r}} \\
			\label{eq:ConcB}	& \leq \frac{|\mathcal{S}|}{24} + \frac{8LS}{\theta N} \mathbb{E} \sup_{f \in \mathcal F} \bigg| \sum_{i \in \cup_{s \in \mathcal{S}} B_s}  \sigma_i \frac{(f-f^{*})(X_i)}{C_{S,r}} \bigg|\enspace.
		\end{align}
		Now either 1) $S \leq  \theta^2 \tilde{r}^2 N /(96L^2)$ or 2) $S >  \theta^2 \tilde{r}^2N /(96L^2)$. 
		Assume first that $S \leq  \theta^2 \tilde{r}^2 N /(96L^2)$, so $C_{S,r} = \tilde{r}^2$ and by definition of the complexity parameter 
		\begin{align*}
			\mathbb{E} \sup_{f \in \mathcal F } \bigg| & \sum_{i \in \cup_{s \in \mathcal{S}} B_s}  \sigma_i \frac{(f-f^{*})(X_i)}{C_{S,r}} \bigg| =  \mathbb{E} \sup_{f \in \mathcal F } \quad \frac{1}{\tilde{r}^2} \bigg| \sum_{i \in \cup_{s \in \mathcal{S}} B_s}  \sigma_i  (f-f^{*})(X_i)\bigg| \leq \frac{\mu |\cS| N}{S}\enspace.
		\end{align*}
		If $ S >  \theta^2 \tilde{r}^2N /(96L^2)$, $C_{S,r} = 96L^2S/( \theta^2 N)$. Then,
		\begin{align*}
			\mathbb{E} \sup_{f \in \mathcal F } \bigg| & \sum_{i \in \cup_{k \in \mathcal{S}} B_s}  \sigma_i \frac{(f-f^{*})(X_i)}{C_{S,r}}  \bigg |\\
			& \leq  	\mathbb{E}  \bigg[ \frac{1}{\tilde r^2} \sup_{f \in F \cap B_{\rho^*}^{\phi}(f^*) \cap \big(f^* +  \tilde r B_{L_2} \big) }  \bigg| \sum_{i \in \cup_{s \in \mathcal{S}} B_s}  \sigma_i (f-f^{*})(X_i) \bigg|  \\
			& \vee  \sup_{f \in F \cap B_{\rho^*}^{\phi}(f^*): \;  \tilde r \leq \|f-f^*\|_{L_2} \leq \sqrt{96L^2S/(\theta^2N)}}  \bigg| \sum_{i \in \cup_{s \in \mathcal{S}} B_s}  \sigma_i \frac{(f-f^{*})(X_i)}{96L^2S/(\theta^2N)} \bigg|   \bigg]
		\end{align*}
		By an homogeneity argument we obtain
		\begin{align*}
			&  \sup_{f \in F \cap B_{\rho^*}^{\phi}(f^*): \;  \tilde r \leq \|f-f^*\|_{L_2} \leq \sqrt{96L^2S/(\theta^2N)}}  \bigg| \sum_{i \in \cup_{s \in \mathcal{S}} B_s}  \sigma_i \frac{(f-f^{*})(X_i)}{96L^2S/(\theta^2N)} \bigg|   \bigg]\\
			& \leq 	\frac{1}{\tilde r}\sup_{f \in F \cap B_{\rho^*}^{\phi}(f^*): \;  \tilde r \leq \|f-f^*\|_{L_2} \leq \sqrt{96L^2S/(\theta^2N)}}  \bigg| \sum_{i \in \cup_{s \in \mathcal{S}} B_s}  \sigma_i \frac{(f-f^{*})(X_i)}{\|f-f^{*}\|} \bigg| \\
			& \leq \frac{1}{\tilde{r}^2} \sup_{f \in F \cap B_{\rho^*}^{\phi}(f^*): \;   \|f-f^*\|_{L_2}  = \tilde r}  \bigg| \sum_{i \in \cup_{s \in \mathcal{S}} B_s}  \sigma_i (f-f^{*})(X_i) \bigg| 
		\end{align*}
		Finally, in the second case  2) we also have
		\begin{align*}
			\mathbb{E} \sup_{f \in \mathcal F } \bigg| & \sum_{i \in \cup_{s \in \mathcal{S}} B_s}  \sigma_i \frac{(f-f^{*})(X_i)}{\max(\frac{4L^2S}{\alpha \theta^2 N}, \tilde r^2)} \bigg|  \leq \frac{\mu |\cS| N}{S}
		\end{align*}
		Plugging this bound in \eqref{eq:ConcB} yields, with probability larger than $1-e^{-|\cS|/288}$
		\[
		\sup_{f \in \mathcal F } \sum_{s \in \mathcal{S}} \bigg( h\big(2C_{S,r}^{-1} | G_f(W_s)|\big) -  \mathbb{E} h\big(2C_{S,r}^{-1} | G_f(W_s)|\big) \bigg)\leqslant |\mathcal{S}|\bigg(\frac{1}{24}+\frac{8L\mu}{\theta }\bigg)=\frac{|\cS|}{12}\enspace.
		\]
		Plugging this inequality into \eqref{res::1} shows that, with probability at least $1-e^{-|\cS|/288}$,
		\[
		z(f)\geqslant \frac{7|\cS|}8\enspace.
		\]
		As $S \geqslant 7|\cO|/3$, $|\cS|\geqslant S-|\cO|\geqslant 4S/7$, hence, $z(f)\geqslant S/2$ holds with probability at least $1-e^{-S/504}$.
	\end{proof}

	\section{Proof Theorem~\ref{thm:erm_RKHS}} \label{sec:proof_RKHS}
	As for the proof of Theorem~\ref{thm_erm_conv2} presented in Section~\ref{sec_proof} the proof is splitted into two parts. While we develop antoher stochastic argument the deterministic part from Proposition~\ref{prop:algebra2} is exaclty the same.\\ 
	In the example of RKHS, the sub-Gaussian Assumption is not necessary. Instead the tools from bounded class of function such as the Bousquet's inequality that we recall here can be used. 
	\begin{Theorem}[Theorem 2.6,~\cite{koltchinskii2011empirical}] \label{thm:bousquet}
		Let $\cF$ be a class of functions bounded by $M$. For all $t>0$, with probability larger than $1-\exp(-t)$
		\begin{equation}
		\sup_{f \in \cF} |(P_N-P)f| \leq \bE \sup_{f \in \cF} |(P_N-P)f| + \sqrt{2\frac{t}{N} \bigg( \sup_{f \in \cF} Pf^2 +  2M \bE \sup_{f \in \cF} |(P_N-P)f| \bigg)} + \frac{tM}{3N} 
		\end{equation}
	\end{Theorem}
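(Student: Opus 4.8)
This is Bousquet's version of Talagrand's concentration inequality for the supremum of an empirical process, quoted from~\cite{koltchinskii2011empirical}; the plan is to prove it by the \emph{entropy method}, i.e.\ the Herbst argument combined with the tensorization of entropy. First I would remove the absolute value by setting $\cG = \cF \cup (-\cF)$ and passing to the un-normalized process $\bar Z = \sup_{g \in \cG} \sum_{i=1}^N \big( g(X_i) - Pg \big)$, so that $\sup_{f \in \cF} |(P_N - P)f| = \bar Z / N$ and any one-sided deviation bound for $\bar Z$ rescales to the claim after division by $N$. Since by Chernoff's bound $\bP\big( \bar Z - \bE \bar Z \geq u \big) \leq \exp\big( -\lambda u + \psi(\lambda) \big)$ for every $\lambda > 0$, where $\psi(\lambda) = \log \bE \exp\big( \lambda (\bar Z - \bE \bar Z) \big)$ is the centred log-Laplace transform, the whole task reduces to bounding $\psi$ and then optimizing in $\lambda$.

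For each $i$ I introduce the leave-one-out process $\bar Z_i = \sup_{g \in \cG} \sum_{j \neq i} \big( g(X_j) - Pg \big)$, which is independent of $X_i$ and satisfies $\bar Z - \bar Z_i \leq 2M$ because $|g| \leq M$. The tensorization (sub-additivity) of entropy gives $\mathrm{Ent}\big( e^{\lambda \bar Z} \big) \leq \sum_{i=1}^N \bE\big[ \mathrm{Ent}_i\big( e^{\lambda \bar Z} \big) \big]$, where $\mathrm{Ent}_i$ denotes entropy in the $i$-th coordinate only, and the dual variational formula for entropy bounds each conditional term by $\bE_i\big[ e^{\lambda \bar Z}\, \varphi\big( -\lambda (\bar Z - \bar Z_i) \big) \big]$ with $\varphi(x) = e^x - x - 1$. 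The decisive input is the \emph{self-bounding} structure of the supremum: evaluating at a maximizer of $\bar Z$ yields the variance estimate $\sum_{i=1}^N \bE_i\big[ (\bar Z - \bar Z_i)^2 \big] \leq N \sigma^2 + 2M\, \bE \bar Z$, where $\sigma^2 = \sup_{f \in \cF} Pf^2$, so that the relevant variance proxy is $v := N \sigma^2 + 2M\, \bE \bar Z$ rather than the naive $N \sigma^2$.

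Combining these ingredients converts the tensorization inequality into a differential inequality for $\psi$: using $\mathrm{Ent}\big( e^{\lambda \bar Z} \big) / \bE\big[ e^{\lambda \bar Z} \big] = \lambda \psi'(\lambda) - \psi(\lambda)$ together with the $\varphi$-bound and the variance estimate, one reaches $\lambda \psi'(\lambda) - \psi(\lambda) \leq \frac{v}{(2M)^2}\, \varphi(2M\lambda)$. Integrating this with the initial conditions $\psi(0) = \psi'(0) = 0$ yields the Bennett-type bound $\psi(\lambda) \leq \frac{v}{(2M)^2}\, \varphi(2M\lambda)$, and Chernoff's inequality then gives a Bennett tail $\bP\big( \bar Z - \bE \bar Z \geq u \big) \leq \exp\big( -\frac{v}{(2M)^2} \varphi^*(2Mu/v) \big)$, with $\varphi^*$ the convex conjugate of $\varphi$. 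Inverting this tail via $\varphi^*(x) \geq x^2 / \big( 2(1 + x/3) \big)$ and solving for $u$ at level $t$ produces $\bar Z \leq \bE \bar Z + \sqrt{2 v t} + c M t$; dividing by $N$ and substituting $v = N(\sigma^2 + 2M\, \bE Z)$ with $\bE Z = \bE \bar Z / N$ recovers the square-root term $\sqrt{2 \frac{t}{N}\big( \sup_{f} Pf^2 + 2M \bE Z \big)}$ exactly.

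The main obstacle is pinning down the \emph{sharp constants}, namely the factor $2$ inside the square root and the precise $\frac{M}{3N}$ in front of the linear term, which is exactly the point where Bousquet improves on the cruder Ledoux/Massart versions. This forces one to keep the full function $\varphi\big( -\lambda (\bar Z - \bar Z_i) \big)$ rather than its quadratic majorant $\varphi(-x) \leq x^2/2$, and to treat the variance term $v = N \sigma^2 + 2M\, \bE \bar Z$ exactly through the self-bounding inequality; the appearance of $\bE \bar Z$ inside $v$ is the genuinely deep feature of Talagrand's inequality. The remaining care concerns the centring, since working with $g - Pg$ naively produces increments bounded by $2M$ and hence a linear term $\tfrac{2M}{3N}t$, so recovering the stated $\tfrac{M}{3N}t$ requires Koltchinskii's normalization (or Bousquet's direct handling of the increments), which I would carry out last.
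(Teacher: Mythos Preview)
The paper does not prove this statement at all: Theorem~\ref{thm:bousquet} is quoted verbatim as an external tool (Bousquet's version of Talagrand's inequality, cited from Koltchinskii's monograph) and is then applied in Section~\ref{sec:proof_RKHS} to control the stochastic event $\Omega$ for the RKHS analysis. There is therefore no ``paper's own proof'' to compare against; the authors simply invoke the result.

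Your sketch is the standard entropy-method derivation of Bousquet's inequality and is correct in its architecture: symmetrize the class to remove the absolute value, use tensorization of entropy together with the leave-one-out variables $\bar Z_i$, exploit the self-bounding structure to get the variance proxy $v = N\sigma^2 + 2M\,\bE\bar Z$, integrate the resulting differential inequality for the log-Laplace transform into a Bennett bound, and invert. This is exactly how Bousquet (and the exposition in Koltchinskii) proceed, so had the paper included a proof it would have been essentially this one. Your final caveat about the constant in the linear term is well taken: the naive bound $|\,g(X_i)-Pg\,|\le 2M$ for $|g|\le M$ would produce $\tfrac{2M}{3N}t$, and recovering the stated $\tfrac{M}{3N}t$ does require the finer one-sided analysis in Bousquet's original argument (working with $g(X_i)-Pg\le M-Pg$ rather than the two-sided bound, and handling the positive part more carefully in the $\varphi$-inequality). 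That refinement is the only place where your outline is genuinely incomplete rather than merely high-level.
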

	Let us define 
	\begin{align*}
		\Omega :=  \bigg \{ \forall f\in F:\|f-f^*\|_{L_2} \leq \max(1,\sqrt{U\|f^*\|_{\cH_K}}) \bar  r(\bar A), \enspace \|f-f^*\|_{\cH_K}^2 \leq 4(2 + 1/\bar A) \|f^*\|_{\cH_K}^2, \\ \big|(P-P_N)\cL_f\big|\leq \frac{\max(1,U\|f^*\|_{\cH_K}) \bar r^2(\bar A)}{2\bar A}   \bigg\}
	\end{align*}
	where we recall that $U = 2L \sqrt{(2+1/\bar A) \|K\|_{\infty}}$. By taking $r^* = \max(1,\sqrt{U\|f^*\|_{\cH_K}}) \bar r(\bar A)$ in the proof of Proposition~\ref{prop:algebra2} it is clear that the deterministic argument is exaclty the same. \\
	Let us show that $\Omega$ holds with probability larger than $1-\exp\big( -(N \bar r^2(\bar A))/(64(\bar AL)^2) \big)$. Let $\cF = \{f \in \cH_K, \enspace \|f-f^*\|_{L_2} \leq \max(1,\sqrt{U\|f^*\|_{\cH_K}}) \bar r(\bar A), \enspace \|f-f^*\|_{\cH_K}^2 \leq \rho^*  \}$. From Assumption~\ref{assum:lip_conv} for all $x,y \in \cX \times \cY$ and $f \in \cF$
	\begin{equation*}
		|(\ell_f - \ell_{f^*})(x,y) | \leq L |f(x)-f^*(x) | \leq  \max(1,U \|f^*\|_{\cH_K})
	\end{equation*}
	We can Therefore use Theorem~\ref{thm:bousquet} with $M = \max(1,U \|f^*\|_{\cH_K})$. From the definition of $\cF$ it follows that $\sup_{f  \in \cF}  P (\ell_f- \ell_{f^*})^2 \leq L^2\max(1,U\|f^*\|_{\cH_K}) \bar r^2(\bar A)$. Let $(\sigma_i)_{i=1}^N$ be i.i.d Rademacher random variables independent from $(X_i,Y_i)_{i=1}$, from the symmetrization and contraction Lemmas \cite{ledoux2013probability} we get
	\begin{align*}
		\bE & \sup_{f  \in \cF} |(P_N-P)\cL_f|  \leq 4L  \bE \sup_{f \in \cF }  \enspace \frac{1}{N} \sum_{i=1}^N \sigma_i (f-f^*)(X_i) \leq  \max(1,U\|f^*\|_{\cH_K})\frac{\bar r^2(\bar A)}{16\bar A}
	\end{align*}
	where we used the Definition~\ref{def:function_r_rkhs} of $r(\cdot)$. For any $t>0$, it follows from Theorem~\ref{thm:bousquet} that for any function $f$ in $\cF$
	\begin{align*}
		|(P_N-P)\cL_f | \leq & \max(1,U\|f^*\|_{\cH_K}) \frac{\bar r^2(\bar A)}{16\bar A} + \frac{\max(1,U\|f^*\|_{\cH_K})t}{3N}\\
		& + \sqrt{\frac{2t}{N}\max(1,U^2\|f^*\|_{\cH_K}^2) \bar  r^2(\bar A) (L^2+\frac{1}{8\bar A} ) }  \enspace.
	\end{align*}
	Take $t = N \bar r^2(\bar A) / (64(L\bar A)^2)$ and use the fact that $\bar A,L \geq 1$ conclude the proof.

	\section{Supplementary lemmas} \label{app_supp}
	
	\begin{Lemma} \label{lem_homo}
		Let $\gamma>0$ and $f$ in $F$ such that $\phi(f-f^*) \geq \gamma$. Then, there exist $f_0$ in $F$ and $1 \leq \alpha \leq  \phi(f-f^*)/ \gamma $ such that $f = f^* + \alpha(f_0 - f^*)$ and $\phi(f_0 - f^*) = \gamma$  
	\end{Lemma}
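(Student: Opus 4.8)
The plan is to rescale along the segment joining $f^*$ to $f$ and to locate, via the intermediate value theorem, the precise rescaling at which the penalty $\phi$ attains the level $\gamma$. Concretely, I would introduce the function $h\colon[0,1]\to\bR^+$ defined by $h(t)=\phi\big(t(f-f^*)\big)$. Since $t\mapsto t(f-f^*)$ is affine and $\phi$ is convex and finite-valued, $h$ is the restriction to $[0,1]$ of an everywhere-finite convex function on $\bR$, hence continuous. One has $h(0)=\phi(0)=0$ and $h(1)=\phi(f-f^*)\ge\gamma>0$.

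Next, by the intermediate value theorem there exists $\tau\in(0,1]$ with $h(\tau)=\gamma$. I would then set $\alpha=1/\tau\ge 1$ and $f_0=f^*+\tau(f-f^*)$. Because $F$ is convex (Assumption~\ref{assum:convex}) and $f^*,f\in F$, the point $f_0$ lies on the segment $[f^*,f]\subset F$, so $f_0\in F$. Moreover $\phi(f_0-f^*)=\phi\big(\tau(f-f^*)\big)=h(\tau)=\gamma$, and $f^*+\alpha(f_0-f^*)=f^*+\tfrac1\tau\cdot\tau(f-f^*)=f$, so the three required conclusions hold.

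It remains to bound $\alpha$. Using convexity of $\phi$ together with $\phi(0)=0$, for every $t\in[0,1]$ we have $h(t)=\phi\big(t(f-f^*)+(1-t)\cdot 0\big)\le t\,\phi(f-f^*)+(1-t)\phi(0)=t\,\phi(f-f^*)$. Evaluating at $t=\tau$ gives $\gamma=h(\tau)\le\tau\,\phi(f-f^*)$, hence $\tau\ge\gamma/\phi(f-f^*)$ and therefore $\alpha=1/\tau\le\phi(f-f^*)/\gamma$, which completes the argument. The only mildly delicate point is justifying the continuity of $h$ so that the intermediate value theorem applies — in particular when $E$ is infinite-dimensional — but this is immediate since a finite convex function restricted to a line is a continuous convex function on $\bR$.
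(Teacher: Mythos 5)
Your proof is correct and follows essentially the same route as the paper's: both rescale along the segment from $f^*$ to $f$ and use convexity of $\phi$ together with $\phi(0)=0$ to obtain the bound $1\le\alpha\le\phi(f-f^*)/\gamma$. The paper hits the level $\gamma$ via a supremum and a maximality argument, whereas you invoke the intermediate value theorem after noting continuity of the one-dimensional convex function $t\mapsto\phi\big(t(f-f^*)\big)$ --- a point the paper leaves implicit --- but this is only a cosmetic difference.
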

	\begin{proof}
		Let $\alpha_0 = \sup \{ \alpha > 0, \; \phi \big(\alpha(f-f^*) \big)  \leq \gamma  \}$. For $\alpha = \gamma/ \phi(f-f^*) \leq 1$ we have $\phi \big(\alpha(f-f^*) \big) \leq \alpha \phi (f-f^*)  = \gamma $ so that $\alpha_0 \geq \gamma/\phi(f-f^*)$. By convexity of $F$, $f_0 := f^* + \alpha_0(f-f^*) \in F$ and $\alpha_0 \leq 1$ otherwise, by convexity of $\phi$ we would have $\alpha_0 \phi(f-f^*) \leq \phi \big( \alpha_0 (f-f^*) \big) \leq \gamma$. Moreover, by maximality of $\alpha_0$, $f_0 $ is such that $\phi \big(\alpha(f-f^*) \big) = \phi(f_0-f^*) = \gamma$. The result follows for $\alpha = \alpha^{-1}_0$ 
	\end{proof}	
	\begin{Lemma}\label{lem_conv}
		Let $f : \bR  \mapsto \bR$ be a convex function. Then for all $\lambda \geq 1$ and $x,y$ in $\bR$:
		\begin{equation} \label{conv_reg}
		f(\lambda x + (1-\lambda) y) \geq \lambda f(x) + (1-\lambda) f(y)
		\end{equation}
	\end{Lemma}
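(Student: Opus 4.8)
The plan is to reduce this ``reverse Jensen'' inequality to an application of ordinary convexity, by observing that the point $x$ can itself be written as a genuine convex combination of the extrapolated point $\lambda x + (1-\lambda)y$ and the point $y$. In other words, extrapolating past $x$ away from $y$ and then interpolating back recovers $x$, and this is precisely the structure that lets convexity run ``in reverse''.

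First I would dispose of the trivial case $\lambda = 1$, where \eqref{conv_reg} holds with equality. For $\lambda > 1$, set $z = \lambda x + (1-\lambda) y$ and $\mu = 1/\lambda$, so that $\mu \in (0,1)$. A one-line computation gives
\[
\mu z + (1-\mu) y = \frac{1}{\lambda}\bigl(\lambda x + (1-\lambda)y\bigr) + \Bigl(1 - \frac{1}{\lambda}\Bigr) y = x + \frac{1-\lambda}{\lambda} y + \frac{\lambda - 1}{\lambda} y = x,
\]
so $x$ lies on the segment $[z, y]$ with barycentric weights $\mu$ on $z$ and $1-\mu$ on $y$.

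Then I would apply convexity of $f$ to this convex combination, obtaining $f(x) \le \mu f(z) + (1-\mu) f(y)$. Multiplying through by $\lambda = 1/\mu > 0$ and rearranging yields
\[
f(z) \ge \lambda f(x) - (\lambda - 1) f(y) = \lambda f(x) + (1-\lambda) f(y),
\]
which is exactly the claimed inequality.

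There is no genuine obstacle here: the only step that needs a moment's attention is verifying the identity $\mu z + (1-\mu)y = x$ and confirming $\mu = 1/\lambda \in (0,1]$ so that the convexity inequality is applied to an admissible convex combination. Everything else is elementary algebra, and the argument works verbatim for a convex function on any interval of $\bR$ containing $x$, $y$ and $\lambda x + (1-\lambda)y$.
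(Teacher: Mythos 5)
Your proof is correct and is essentially the paper's own argument: both write $x$ as the convex combination $\frac{1}{\lambda}\bigl(\lambda x + (1-\lambda)y\bigr) + \bigl(1-\frac{1}{\lambda}\bigr)y$, apply ordinary convexity with weights $1/\lambda$ and $1-1/\lambda$, and rearrange. Your version just spells out the substitution and the trivial case $\lambda=1$ a bit more explicitly.
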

	\begin{proof}
		Let $\lambda \geq 1$, by convexity of $f$, for all $x,y$ in $\bR$:
		\begin{equation*}
			f \bigg(\frac{1}{\lambda} x + (1-\frac{1}{\lambda}) y \bigg) \leq \frac{1}{\lambda} f(x) + (1-\frac{1}{\lambda}) f(y)
		\end{equation*}
		It suffice to take $x = \lambda x + (1-\lambda) y$ to get the result.
	\end{proof}

\section*{Acknowledgements}
I would like to thank Guillaume Lecu{\'e} and Matthieu Lerasle for their precious advices on this work. 

\begin{footnotesize}
	\bibliographystyle{amsplain}
	\bibliography{biblio}
\end{footnotesize}

\end{document}